\newtheorem{theorem}{Theorem}[section]
\newtheorem{lemma}[theorem]{Lemma}
\newtheorem{slemma}[theorem]{Sublemma}
\newtheorem{proposition}[theorem]{Proposition}
\theoremstyle{definition}
\newtheorem{definition}[theorem]{Definition}
\newtheorem{ex}[theorem]{Example}
\newtheorem{remark}[theorem]{Remark}
\numberwithin{equation}{section}
\newskip\aline \newskip\halfaline
\def\skipaline{\vskip\aline}
\def\qedbox{$\rlap{$\sqcap$}\sqcup$}
\def\qed{\nobreak\hfill\penalty250 \hbox{}\nobreak\hfill\qedbox\skipaline}
\def\proofend{\eqno{\mbox{\qedbox}}}
\newcommand{\one}{\mathbbm{1}}
\newcommand\bR{{\mathbb R}}
\newcommand\bZ{{\mathbb Z}}
\DeclareMathOperator{\tr}{{\rm tr}}
\DeclareMathOperator{\Gr}{\mathbf{Gr}}
 \DeclareMathOperator{\Hom}{Hom}
\DeclareMathOperator{\Vect}{Vect}
\DeclareMathOperator{\ev}{\mathbf{ev}}
\DeclareMathOperator{\Hess}{Hess}
\DeclareMathOperator{\var}{\boldsymbol{var}}
\DeclareMathOperator{\SO}{SO}
\DeclareMathOperator{\cov}{\boldsymbol{cov}}
\DeclareMathOperator{\Cov}{\boldsymbol{Cov}}
\newcommand{\be}{{\boldsymbol{e}}}
\newcommand{\bsf}{\boldsymbol{f}}
\newcommand{\ii}{\boldsymbol{i}}
\newcommand{\kk}{\boldsymbol{k}}
\newcommand{\bell}{\boldsymbol{\ell}}
\newcommand{\bm}{\boldsymbol{m}}
\newcommand{\bp}{{\boldsymbol{p}}}
\newcommand{\bq}{{\boldsymbol{q}}}
\newcommand{\bt}{\boldsymbol{t}}
\newcommand{\bu}{{\boldsymbol{u}}}
\newcommand{\bv}{{\boldsymbol{v}}}
\newcommand{\bw}{{\boldsymbol{w}}}
\newcommand{\bx}{{\boldsymbol{x}}}
\newcommand{\by}{{\boldsymbol{y}}}
\newcommand{\bsB}{\boldsymbol{B}}
\newcommand{\bsE}{\boldsymbol{E}}
\newcommand{\bsI}{\boldsymbol{I}}
\newcommand{\bsK}{{\boldsymbol{K}}}
\newcommand{\bsL}{\boldsymbol{L}}
\newcommand{\bsM}{\boldsymbol{M}}
\newcommand{\bsR}{\boldsymbol{R}}
\newcommand{\bsS}{\boldsymbol{S}}
\newcommand{\bsT}{\boldsymbol{T}}
\newcommand{\bsU}{{\boldsymbol{U}}}
\newcommand{\bsV}{\boldsymbol{V}}
\newcommand{\bsZ}{\boldsymbol{Z}}
\newcommand{\bgamma}{\boldsymbol{\gamma}}
\newcommand{\bGamma}{\boldsymbol{\Gamma}}
\newcommand{\blam}{{\boldsymbol{\lambda}}}
\newcommand{\brho}{{\boldsymbol{\rho}}}
\newcommand{\bom}{\boldsymbol{\omega}}
\newcommand{\bsi}{\boldsymbol{\sigma}}
\newcommand{\bSi}{{\boldsymbol{\Sigma}}}
\newcommand{\bpsi}{\boldsymbol{\psi}}
\newcommand{\bXi}{\boldsymbol{\Xi}}
\newcommand{\bOm}{\boldsymbol{\Omega}}
\newcommand{\si}{{\sigma}}
\newcommand{\ve}{{\varepsilon}}
\newcommand{\vfi}{{\varphi}}
\newcommand{\eA}{\EuScript{A}}
\newcommand{\eD}{\EuScript{D}}
\newcommand{\eE}{\EuScript{E}}
\newcommand{\eG}{\EuScript{G}}
\newcommand{\eI}{{\EuScript{I}}}
\newcommand{\eL}{\EuScript{L}}
\newcommand{\eN}{\EuScript{N}}
\newcommand{\eO}{\EuScript{O}}
\newcommand{\eS}{\EuScript{S}}
\newcommand{\eU}{\EuScript{U}}
\newcommand{\ra}{\rightarrow}
\newcommand{\Lra}{{\longrightarrow}}
\newcommand{\Llra}{{\Longleftrightarrow}}
\newcommand{\lan}{\langle}
\newcommand{\ran}{\rangle}
\def\inpr{\mathbin{\hbox to 6pt{\vrule height0.4pt width5pt depth0pt \kern-.4pt \vrule height6pt width0.4pt depth0pt\hss}}}
\newcommand{\pa}{\partial}
\newcommand{\dual}{\spcheck{}}
\begin{document}

\title[Random smooth functions]{Critical sets of random  smooth functions on compact manifolds} 

\date{Started  April 1, 2010. Completed  on January 26, 2011.
Last modified on {\today}. }

\author{Liviu I. Nicolaescu}
\thanks{This work was partially supported by the NSF grant, DMS-1005745.}

\address{Department of Mathematics, University of Notre Dame, Notre Dame, IN 46556-4618.}
\email{nicolaescu.1@nd.edu}
\urladdr{\url{http://www.nd.edu/~lnicolae/}}

\subjclass[2000]{Primary     15B52, 42C10, 53C65, 58K05, 60D05, 60G15, 60G60 }
\keywords{random Morse functions, critical points,     Kac-Price formula, gaussian random processes,  spectral function,  random matrices.}

\begin{abstract}  Given   a compact, connected Riemann manifold without boundary  $(M,g)$ of dimension $m$ and a  large positive constant $L$  we denote by $\bsU_L$ the subspace of $C^\infty(M)$ spanned by eigenfunctions  of the Laplacian corresponding  to eigenvalues $\leq L$. We equip $\bsU_L$ with the    standard Gaussian probability measure  induced by the $L^2$-metric on $\bsU_L$, and we denote by $\eN_L$ the  expected number  of    critical  points of a random function in $\bsU_L$. We prove  that $\eN_L\sim C_m\dim \bsU_L$ as $L\ra \infty$, where  $C_m$ is an explicit positive constant that depends only on the dimension $m$ of $M$ and satisfying the  asymptotic estimate $\log C_m\sim\frac{m}{2}\log m$ as $m\ra \infty$.\end{abstract}

\maketitle

\tableofcontents

\section{Introduction}
\setcounter{equation}{0}

Suppose that $(M,g)$ is a smooth, compact, connected  Riemann manifold of dimension $m>1$.   We denote by $|dV_g|$  the volume density     on $M$ induced by $g$. Throughout the paper we assume that the volume is normalized
\[
\int_M|dV_g(x)|=1.
\]
 For any $\bu, \bv\in C^\infty(M)$ we  denote by $(\bu,\bv)_g$ their $L^2$ inner product,
\[
(\bu,\bv)_g:=\int_M \bu(x)\bv(x) \,|dV_g(x)|.
\]
The $L^2$-norm  of a smooth function $u$ is then
\[
\|\bu\|:=\sqrt{(\bu,\bu)_g}.
\]
Let $\Delta_g: C^\infty(M)\ra C^\infty(M)$ denote the scalar Laplacian defined by the metric $g$.  For $L >0$ we set
\[
\bsU_L=\bsU_L(M,g):=\bigoplus_{\lambda\in [0,L]}\ker(\lambda-\Delta_g),\;\;d(L):=\dim \bsU_L.
\]
 We equip  $\bsU_L$  with the  Gaussian probability measure.
\[
d\bgamma_L(\bu):= (2\pi)^{-\frac{d(L)}{2}}e^{-\frac{\|\bu\|^2}{2}} |d\bu|.
\]
For  any $\bu\in\bsU_L$ we denote by   $\eN_L(\bu)$ the number of critical points of $\bu$.   If $L$ is sufficiently   large, then $\eN_L(\bu)$ is finite   with probability $1$.  We   obtain in this fashion a random variable  $\eN_L=\eN_{L,M,g}$, and we denote by $\bsE(\eN_L)$ its expectation
\[
\bsE(\eN_L):=\int_{\bsU_L} \eN_L(\bu) d\bgamma_L(\bu).
\]
In this paper   we   investigate the behavior of $\bsE(\eN_L)$ as $L\ra \infty$.   More precisely, we will prove the following result.
\begin{theorem}  For any $m>1$ there exists a positive constant $C=C(m)$  such that \textbf{\textit{for any}} compact, connected, $m$-dimensional  Riemannian manifold $M$ we have
\begin{equation}
\bsE(\eN_{L,M,g})\sim C(m) \dim \bsU_L(M,g)\;\;\mbox{as}\;\; L\ra \infty.
\label{eq: main1}
\end{equation}
\label{th: main}
\end{theorem}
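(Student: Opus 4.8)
The natural approach is the Kac-Rice method for counting critical points of a Gaussian random field. A random $\bu\in\bsU_L$ is a centered Gaussian field on $M$ whose covariance kernel is the spectral function
\[
\eK_L(x,y)=\sum_{\lambda_j\leq L}\psi_j(x)\psi_j(y),
\]
where $\{\psi_j\}$ is an $L^2$-orthonormal basis of eigenfunctions of $\Delta_g$ with eigenvalues $\lambda_j$. The critical points of $\bu$ are the zeros of the random section $d\bu$ of $T^*M$, and at a nondegenerate zero the local count is $1$. The first step is to apply the Kac-Rice formula to write
\[
\bsE(\eN_L)=\int_M\rho_L(p)\,|dV_g(p)|,
\]
where the Kac-Rice density $\rho_L(p)$ is the conditional expectation $\bsE\bigl[\,|\det\Hess_p\bu|\,\big|\,d\bu(p)=0\,\bigr]$ multiplied by the value at $0$ of the Gaussian density of the random vector $d\bu(p)$. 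Since $\bsU_L$ is finite-dimensional, all realizations are smooth, so I only need to check, for $L$ large, the nondegeneracy hypothesis that the covariance of $d\bu(p)$ is positive definite, uniformly in $p$.

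Second, the density $\rho_L(p)$ is a purely Gaussian quantity determined by the joint law of the $2$-jet $\bigl(d\bu(p),\Hess_p\bu\bigr)$, and this law is read off from the derivatives of $\eK_L$ on the diagonal. Working in geodesic normal coordinates centered at $p$ (so that $g_{ij}(p)=\delta_{ij}$ and the Christoffel symbols vanish at $p$), the gradient covariance is the matrix $\partial_{x_i}\partial_{y_j}\eK_L|_{x=y=p}$, while the couplings of gradient to Hessian and of Hessian to Hessian are the third and fourth mixed derivatives of $\eK_L$ on the diagonal. Thus $\rho_L(p)$ reduces to a finite-dimensional Gaussian matrix integral whose only inputs are these derivative values.

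Third, and this is the analytic core, I would invoke the precise near-diagonal asymptotics of the spectral function and its derivatives (H\"ormander's local Weyl law together with its refinements for derivatives). These say that, after rescaling distances by $\sqrt L$, the field $\bu$ converges near $p$ to a universal stationary isotropic Gaussian field on $\bR^m$ whose spectral measure is the normalized Lebesgue measure on the unit ball. Concretely, the diagonal derivatives scale as explicit powers of $L$ with universal leading coefficients given by the moments $\int_{|\xi|\leq 1}\xi^\alpha\,d\xi$ of the uniform measure on the ball, the metric entering only through $\delta_{ij}$ in the chosen coordinates: $\eK_L(p,p)\sim\frac{\omega_m}{(2\pi)^m}L^{m/2}$, the gradient covariance scales like $L^{m/2+1}$, and the Hessian covariance like $L^{m/2+2}$, all with $p$-independent leading constants. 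Moreover the odd (third) derivatives vanish in the limit, so gradient and Hessian decouple and the conditioning becomes trivial. Inserting these into the Gaussian integral of the second step, the powers of $L$ combine (the $L^{m^2/4}$ factors from $\det\Hess$ cancelling against those from the gradient density) so that $\rho_L(p)\sim C(m)\,\frac{\omega_m}{(2\pi)^m}L^{m/2}$, where $C(m)$ is the Kac-Rice density of the universal limiting field, namely the expectation of $|\det|$ of a fixed centered random symmetric (GOE-type) matrix, independent of $M$, $g$ and $p$.

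Finally, integrating over $M$ and comparing with Weyl's law $\dim\bsU_L\sim\frac{\omega_m}{(2\pi)^m}\Vol_g(M)\,L^{m/2}$ yields \eqref{eq: main1}. The main obstacle is uniformity: to integrate the density and pass to the limit I need the near-diagonal spectral asymptotics, including those for the fourth derivatives controlling the Hessian, to hold uniformly in $p\in M$, with remainder estimates uniform across the compact manifold. Controlling these derivative remainders, rather than the pointwise leading terms, is the technical heart of the argument; a secondary task is the explicit evaluation of $C(m)$ as a GOE-type determinant expectation, from which the asymptotics $\log C(m)\sim\frac m2\log m$ would follow.
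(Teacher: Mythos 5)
Your proposal is correct, and its analytic core coincides with the paper's proof: both reduce $\bsE(\eN_L)$ to the integral of a Kac--Rice density over $M$, read the joint Gaussian law of the $2$-jet $\bigl(d\bu(\bp),\Hess_\bp(\bu,g)\bigr)$ off the diagonal derivatives of the spectral function $\eE_L$ in normal coordinates, invoke the uniform H\"{o}rmander-type estimates for these derivatives (the paper uses \cite{Bin}, i.e.\ (\ref{eq: bin}), valid uniformly in $\bp$ up to the fourth-order derivatives that control the Hessian covariance), note that the third derivatives are of lower order so that gradient and Hessian asymptotically decouple, and conclude $\rho_L(\bp)\sim C(m)\frac{\bom_m}{(2\pi)^m}L^{\frac{m}{2}}$ uniformly in $\bp$, whence (\ref{eq: main1}) via Weyl's law (\ref{eq: weyl}); your power counting ($L^{m^2/4}$ from the determinant cancelling against the gradient density, leaving $L^{m/2}$) is exactly what happens in the paper. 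The one genuine divergence is the first step: you import the Kac--Rice formula wholesale from the random-fields literature (as in \cite[Thm.\ 11.2.1]{AT}, \cite[Thm.\ 6.2]{AzWs}, or \cite{BSZ2}), which the paper explicitly acknowledges covers this situation, whereas the paper derives its integral formula (\ref{eq: KR}) from scratch by the double-fibration trick of integral geometry. Your route is shorter; the paper's buys a self-contained geometric argument that avoids the regularization-and-limit step implicit in Kac--Rice and yields the Chern--Lashof theorem and a Gauss--Bonnet formula (\ref{eq: GB}) as byproducts. On your route you must still verify, for large $L$, the joint nondegeneracy of the $2$-jet (the paper's $1$- and $2$-ampleness), which the same asymptotics supply, and handle the conditioning via the regression formula (\ref{eq: regress2}), as in (\ref{eq: rand-matrix}). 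One precision worth recording: the limiting matrix ensemble is \emph{not} the GOE proper --- its covariance is $\bsE(a_{ij}a_{k\ell})=\delta_{ij}\delta_{k\ell}+\delta_{ik}\delta_{j\ell}+\delta_{i\ell}\delta_{jk}$, realized as a GOE matrix plus an independent scalar multiple of $\one_m$, with density (\ref{eq: 311}) --- so your ``GOE-type'' hedge is acceptable but the precise ensemble matters both for the value of $C(m)$ in (\ref{eq: main2}) and for the large-$m$ asymptotics of Theorem \ref{th: main2}.
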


The  constant $C(m)$ can be expressed in terms of certain statistics on the space  $\eS_m$ the space of symmetric $m\times m$ matrices .  We  denote  $d\bgamma_*$ the Gaussian measure\footnote{We refer to Appendix \ref{s: rand-sym} for a detailed   description of a   $3$-parameter family Gaussian measures $d\Gamma_{a,b,c}$ on $\eS_m$ that includes $d\bgamma_*$  as $d\bgamma_*=d\Gamma_{3,1,1}$.}on $\eS_m$ given by
\begin{equation}
\begin{split}
d\bgamma_*(X)&=\frac{1}{(2\pi)^{\frac{m(m+1)}{4}}\sqrt{\mu_m}}\,\cdot\,  e^{-\frac{1}{4}\bigl(\,\tr X^2-\frac{1}{m+2}(\tr X)^2\,\bigr)} 2^{\frac{1}{2}\binom{m}2}\prod_{i\leq j} dx_{ij},\\
\mu_m&=2^{\binom{m}{2}+1}(m+2)^{m-1}.
\end{split}
\label{eq: 311}
\end{equation}
Then
\begin{equation}
C(m)=\left(\frac{4\pi}{m+4}\right)^{\frac{m}{2}}\Gamma(1+\frac{m}{2})\, \underbrace{\int_{\eS_m}|\det X| \,d\bgamma_*(X)}_{=:I_m}.
\label{eq: main2}
\end{equation}
A similar result holds in the case  $m=1$.  In this case  $M=S^1$ and $\bsU_L$  is the space of    trigonometric   polynomials of degree $\leq L$. One can show (see \cite{N2})
\[
\bsE(\eN_{L,S^1})\sim  \sqrt{\frac{3}{5}}\dim\bsU_L\;\;\mbox{as}\;\;L\ra \infty.
\]
We can say something about the behavior of $C(m)$ as $m\ra \infty$.
\begin{theorem}\begin{equation}
\log C(m)\sim \log I_m \sim \frac{m}{2}\log m\;\;\mbox{as}\;\;m\ra \infty.
\label{eq: main3}
\end{equation}
\label{th: main2}
\end{theorem}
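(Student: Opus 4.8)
\emph{Strategy.} The plan is to show first that the explicit analytic prefactor in \eqref{eq: main2} is subdominant, so that the whole assertion reduces to the single estimate $\log I_m=\frac m2\log m+O(m)$, and then to prove that estimate by sandwiching $\log I_m$ between two matching bounds. First I would apply Stirling's formula to the prefactor. With $x=1+\frac m2$ one gets $\log\Gamma(1+\frac m2)=\frac m2\log m-\frac m2\log 2-\frac m2+O(\log m)$, while $\frac m2\log\frac{4\pi}{m+4}=\frac m2\log(4\pi)-\frac m2\log m+O(1)$. Adding these, the two copies of $\frac m2\log m$ cancel and one is left with $\log\bigl[(\frac{4\pi}{m+4})^{m/2}\Gamma(1+\frac m2)\bigr]=\frac m2(\log 2\pi-1)+O(\log m)=O(m)$. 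Hence $\log C(m)=\log I_m+O(m)$, and since $O(m)=o(\frac m2\log m)$, both equivalences $\log C(m)\sim\log I_m$ and $\log I_m\sim\frac m2\log m$ will follow at once from $\log I_m=\frac m2\log m+O(m)$.

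Next I would read off the covariance structure of $d\bgamma_*$ from the quadratic form $\frac14\bigl(\tr X^2-\frac1{m+2}(\tr X)^2\bigr)$ in \eqref{eq: 311}. The off-diagonal entries $x_{ij}$, $i<j$, are independent standard normals, while the diagonal vector $(x_{11},\dots,x_{mm})$ is a centered Gaussian whose precision matrix $\frac12(\one-\frac1{m+2}J)$, with $\one$ the identity and $J$ the all-ones matrix, inverts by Sherman--Morrison to the covariance $2\one+J$. Consequently a matrix $X$ drawn from $d\bgamma_*$ can be realized as $X=X_0+t\,\one$, where $X_0$ is a standard $\GOE$ matrix (off-diagonal variance $1$, diagonal variance $2$), $t\sim N(0,1)$ is an independent scalar, and in particular $I_m=\bsE|\det(X_0+t\one)|$. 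Since the spectrum of $X_0$ obeys the semicircle law on $[-2\sqrt m,2\sqrt m]$ while $t=O(1)$, the shift will be negligible at the logarithmic scale.

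For the upper bound I would use the arithmetic--geometric mean inequality on the squared eigenvalues, $|\det X|^{2/m}\le\frac1m\tr X^2$, i.e.\ $|\det X|\le(\tr X^2/m)^{m/2}$, so that $I_m\le m^{-m/2}\,\bsE\bigl[(\tr X^2)^{m/2}\bigr]$. Since the entries of $X_0$ are independent, $\tr X_0^2$ is exactly a multiple of a $\chi^2$ variable with $\binom{m+1}2$ degrees of freedom, and its $\frac m2$-th moment is a ratio of Gamma functions that Stirling evaluates to $m^m e^{O(m)}$; the shift contributes $\bsE[(mt^2)^{m/2}]=m^{m/2}\bsE[|t|^m]=m^m e^{O(m)}$ as well. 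Thus $\bsE[(\tr X^2)^{m/2}]=m^m e^{O(m)}$ and $\log I_m\le-\frac m2\log m+m\log m+O(m)=\frac m2\log m+O(m)$.

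For the lower bound I would invoke the concavity of $\log$ (Jensen), $\log I_m=\log\bsE|\det X|\ge\bsE\log|\det X|=\bsE\sum_{i=1}^m\log|\lambda_i(X)|$, which is tight here because the upper bound already matches. Writing $\lambda_i(X)=\sqrt m\,s_i+t$ turns the right-hand side into $\frac m2\log m+m\,\bsE\bigl[\frac1m\sum_i\log|s_i+t/\sqrt m|\bigr]$, and as $m\to\infty$ the bracket should converge to the finite (negative) constant $\int_{-2}^{2}\log|s|\,d\rho_{sc}(s)$, $\rho_{sc}$ the semicircle density, giving $\bsE\log|\det X|\ge\frac m2\log m-O(m)$ and hence $\log I_m=\frac m2\log m+O(m)$. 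The main obstacle is precisely the justification of this last limit: one must rule out an anomalously negative expected contribution of eigenvalues near the origin to $\bsE\sum_i\log|\lambda_i|$. This calls for a random-matrix input, an a priori bound of the form $\bsE\,\#\{i:\ |\lambda_i(X_0)|\le\varepsilon\sqrt m\}=O(m\varepsilon)$ on the mean spectral density near zero (so that $\log|\cdot|$ is uniformly integrable against the expected empirical measure), plus a routine check that integrating over the independent shift $t$ does not spoil the estimate. As an alternative bypassing these subtleties, I would try to derive a closed Gamma-function expression for $\bsE|\det(X_0+t\one)|$ and extract the asymptotics directly from Stirling's formula.
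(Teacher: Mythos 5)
Your proposal is correct in its overall logic, and it takes a genuinely different route from the paper. The common starting point is structural: your Sherman--Morrison computation of the covariance of $d\bgamma_*$ and the resulting representation $X=X_0+t\one_m$, with $X_0\in \GOE_m$ and $t\sim N(0,1)$ independent, is exactly the ensemble identity the paper records in \ding{204}, and it is the probabilistic avatar of the Fyodorov trick that opens Section \ref{s: asy} (there the scalar shift appears as an auxiliary Gaussian integral over $s$ rather than as a random variable). After that the methods diverge. The paper stays exact: it absorbs $\prod_j|\lambda_j-\lambda_0|$ into a Vandermonde of size $m+1$ via the Weyl integration formula, obtains the closed expression (\ref{eq: im}) for $I_m$ in terms of the GOE $1$-point correlation function $\rho_{m+1}$, and evaluates $\int_\bR\rho_{m+1}(x)e^{-3x^2/2}\,dx$ by a refinement of Wigner's law resting on Hermite/Plancherel--Rotach asymptotics; this gives the asymptotics of $I_m$ itself up to explicit constants, from which the log-asymptotics follow by Stirling. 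Your sandwich --- AM--GM plus exact $\chi^2$ moments for the upper bound (your identification $\tr X_0^2=2\chi^2_{\binom{m+1}{2}}$ is correct), and Jensen plus the semicircle law for the lower bound --- yields only $\log I_m=\frac{m}{2}\log m+O(m)$, but that is all Theorem \ref{th: main2} asserts, and your upper bound is completely elementary, with no orthogonal-polynomial input at all. Your reduction of $\log C(m)$ to $\log I_m$ is the same Stirling computation as the paper's final step.

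The one genuine gap is exactly where you flagged it: the lower bound requires $\bsE\sum_i\log|\lambda_i(X)|\ge \frac{m}{2}\log m-O(m)$, i.e., ruling out an excessively negative contribution from eigenvalues near the origin (this is also needed just to know $\bsE\log|\det X|>-\infty$, so that Jensen is not vacuous). The input you propose --- mean counting $\bsE\,\#\{i:\,|\lambda_i(X_0)|\le \ve\sqrt m\}=O(m\ve)$, equivalently a uniform $O(\sqrt m)$ bound on the mean density of states --- is true and does suffice: writing $\bsE\sum_i f(\lambda_i)=\int f\,R_m$ and decomposing $\{|\lambda|\le 1\}$ dyadically gives $\bsE\sum_i\log^-|\lambda_i|=O(\sqrt m)$, and the independent shift $t$ merely translates $R_m$, so Fubini disposes of it. Moreover, at your $O(m)$ precision much cruder bounds already work, and they can be extracted from the paper's own ingredients: the sup-norm bound $\sup_x|\psi_k(x)|=O(k^{-1/12})$ gives $\kk_m=O(m^{5/6})$ pointwise, Lemma \ref{lemma: ln} gives $\bell_m=o(\sqrt m)$, hence (after the trivial rescaling between the two GOE normalizations) $R_m=O(m^{5/6})$ uniformly, and the near-zero contribution to $\bsE\log|\det X|$ is $O(m^{5/6})=o(m)$. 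Note also that you do not need the full convergence of the bracket to $\int\log|s|\,\rho(s)\,ds$; a one-sided bound, bounded below by a constant, is enough. Your fallback suggestion --- a closed Gamma-function expression for $\bsE|\det(X_0+t\one_m)|$ --- is in effect what the paper's identity (\ref{eq: im}) provides.
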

The proof of (\ref{eq: main1})  is based  on  Kac-Rice's   integral formula  \cite{AT, AzWs, BSZ2,DSZ1,DSZ2}   which expresses  the expected number of critical points of a function in $\bsU_L$ as an integral
\[
\bsE(\eN_L)=\int_M \rho_L(\bx)\,|dV_g(\bx)|.
\tag{$\ast$}
\label{tag: ast}
\]
The above  equality was given a  geometric interpretation by  Chern and Lashof \cite{CL}. More precisely,  they showed  that the    integral in the right-hand side of the above equality  is the  the total curvature of the immersion given by the evaluation map
\begin{equation}
  \ev  : M\ra \Hom(\bsU_L,\bR),\;\;\bp\mapsto \ev_\bp,
\label{eq: ev-intr}
\end{equation}
 where $\ev_\bp(\bu)=\bu(p)$, $\forall \bu\in\bsU_L$.  
 
 For our purposes  the probabilistic  description of the integrand $\rho_L(\bx)$  is more useful.  To formulate it let us denote by $\Hess_\bx(\bu,g)$ the Hessian  at $\bx$ of the random function $\bu\in \bsU_L$   computed using the  Levi-Civita connection  of the metric  $g$.  Using the metric  we   identify $\Hess_\bx(\bu,g)$ with a symmetric linear operator $T_\bx M\ra T_\bx M$.  Then  
 \begin{equation}
 \rho_L(\bx)=\frac{1}{\sqrt{\det 2\pi S_{d\bu(x)}}}\bsE\Bigl(\,|\det \Hess_\bx(\bu,g)|\; \bigr|\; d\bu(x)=0\,\Bigr).
 \label{eq: rho-prob}
 \end{equation}
 Above,  $S_{d\bu(x)}$   denotes  covariance matrix of  the Gaussian vector  $\bsU_L\ni \bu \mapsto d\bu(x)\in T_x^* M$, while  the quantity
 \[
 \bsE\Bigl(\,|\det \Hess_\bx(\bu,g)|\; \bigl|\; d\bu(x)=0\,\Bigr)
 \]
 is the conditional expectation  of the random variable  $\bu\mapsto |\det \Hess_\bx(\bu,g)|$ given that $d\bu(x)=0$.

Using the  \emph{regression formula} (see \cite[Prop. 1.2]{AzWs}  or (\ref{eq: regress1})) we express  this conditional expectation  as the     unconditional expectation of a new  random variable $|\det A_L(\bx)|$, where $A_L(\bx)$ denotes a  random, Gaussian  symmetric $m\times m$ matrix whose   covariance  takes into account the correlations   between the Gaussian variables $\bu\mapsto \Hess_\bx(\bu,g)$ and $\bu\mapsto d\bu(x)$.

 Next, we reduce  the large $L$ asymptotics of the  Gaussian random vector $d\bu(x)$ and matrix $A_L(\bx)$  to  questions concerning the asymptotics    of the spectral function $\eE_L$ of the Laplacian, i.e., the Schwartz kernel    of the  orthogonal projection onto $\bsU_L$.   These issues were addressed  in the pioneering work  of L. H\"{o}rmander \cite{Hspec}.     
   
  We  actually prove a bit more. We show that  
  \begin{equation}
  \lim_{L\ra \infty} L^{-\frac{m}{2}} \rho_L(\bx)= \frac{C(m)\bom_m}{(2\pi)^m},\;\;\mbox{uniformly in $\bx\in M$},
  \label{eq: main4}
  \end{equation}
  where $\bom_m$ denotes the volume of the unit ball in $\bR^m$.  Using the classical Weyl estimates (\ref{eq: weyl}) we see that (\ref{eq: main4}) implies  (\ref{eq: main1}).    
  
  The  equality (\ref{eq: main4}) has an interesting interpretation. We  can think of $\rho_L(\bx) |dV_g(\bx)|$  as the expected number of critical points of a random function in $\bsU_L$    inside an infinitesimal region  of volume $|dV_g(\bx)|$ around the point $\bx$. From this point of view  we see  that (\ref{eq: main4})  states that \emph{for large $L$ we expect the  critical points of   a random function   in $\bsU_L$  to be approximatively  uniformly distributed}.  
  
  We are inclined to believe  that  as $L\ra \infty$  the ratio 
  \[
  q_L=\frac{\var(\eN_L)}{\bsE(\eN_L)}
  \]
  has a finite limit $q(M,g)$.  Such a result would show that $\eN_L$ is highly concentrated near its mean value as $L\ra \infty$.   In  \cite{N2}  we proved   that this is the case when $M=S^1$ and moreover,  $q(S^1)\approx 0.4518....$.    In \cite{Ntorus} we proved that a closely related concentration result is valid for all  flat tori.  
  
  For a holomorphic counterpart of such an estimate we refer to \cite{SZ0}.

We obtain the  asymptotics  of $C(m)$    by relying on a trick   used by  Y.V. Fyodorov \cite{Fy} in a related context.      This reduces the asymptotics of the integral $I_m$ to known asymptotics of the $1$-point correlation function in random matrix theory, more precisely, Wigner's semi-circle law.

Philosophically, the  universality  result    contained in   Theorem \ref{th: main}  is a consequence   of  a universal  behavior of the spectral function   $\eE_L$   along the diagonal.  Roughly speaking, if we rescale the metric $g$ so that in the limit it becomes flatter, and flatter,  then the corresponding spectral function   begins to resemble the spectral   function of the Laplacian on the Euclidean   space $\bR^m$. For a precise formulation   of this universal rescaling phenomenon we refer to \cite{LPS, N3}.

  A related problem was considered by M. Douglas, B. Shiffman, S. Zelditch, \cite{DSZ1, DSZ2} where they investigate the  number of critical points of a random  holomorphic section of a  large power  $N$ of  a positive holomorphic line bundle $\eL$ over a K\"{a}hler manifold  $X$.      In these papers  the role of our $\bsU_L$ is played by the space of  holomorphic sections $H^0(X,\eL^N)$, and the large $L$ asymptotics is   replaced by large $N$ asymptotics.   The large $N$ asymptotics ultimately follow from the  refined asymptotics of the Szeg\"{o} kernels obtained by S. Zelditch in \cite{Z0}.      These refined  asymptotics   then lead to a complete asymptotic expansion as $N\ra\infty$ for the expected number of critical points of a random   holomorphic section of $\eL^N$.

  The proof of Theorem \ref{th: main} reveals several  additional interesting universal rescaling  phenomena.     We identify $\bsU_L$  with $\bsU_L\dual=\Hom(\bsU_L,\bR)$ using the $L^2$-metric. We can thus view the evaluation map in (\ref{eq: ev-intr}) as a map $\ev: M\ra \bsU_L$. For large  $L$ this map is an embedding, and we denote by $\bsi_L$ the pullback  to $M$  via $\ev$ of the $L^2$-metric on $\bsU_L$.  Equivalently, if $(\bpsi_k)$ is an orthonormal basis of $\bsU_L$, then
  \[
  \bsi_L=\sum_k d\bpsi_k\otimes d\bpsi_k.
  \]
   The  equality (\ref{eq: met-asy}) in the proof  of Theorem \ref{th: main} shows that the rescaled metric $g(L):=L^{-\frac{m+2}{2}}\bsi_L$ converges in the $C^0$ topology  to  $K_m g$, where $g$ is the original metric on $M$ and $K_m$ is a certain, explicit constant  that depends only on $m$; see (\ref{eq: km}).   This was also observed  by S. Zelditch,  \cite[Prop. 2.3]{Zel}.    A  closely  related result  was proved   in \cite[Thm.5]{BBG}.
  
     To obtain the convergence  of $g(L)$ in  stronger topologies we would need bounds    on the sectional curvature of $g(L)$. We     show that these bounds are equivalent to  some   refined asymptotic estimates    satisfied   by    certain linear combinations of fourth order  derivatives of the spectral function, (\ref{eq: bcurv}). 
     
  A  related embedding can be constructed  in the holomorphic case and S. Zelditch \cite{Z0} has proved that the resulting  sequence of   suitably rescaled metrics $g_N$  converges $C^\infty$ to the original K\"{a}hler metric.     The main reason  for such a stronger form of convergence is the  better behavior of the Szeg\"{o}  kernels.       Such a  regular behavior is not to be expected for the spectral function $\eE_L$.

The present paper is structured as follows. Section \ref{s: int}  contains the  formulation and the proof  of the key integral formula  (\ref{eq: rho-prob}), including  several reformulations   in the language of random  processes.   In this section we also present  a simple application of this formula to the number of critical points  of random spherical harmonics  of large degree on $S^2$. This sheds  additional  light on   a recent result of Nazarov and Sodin \cite{NS} on the number of nodal domains  of random spherical harmonics. More precisely,  the inequality (\ref{eq: zonal}) shows that  the expected number $\delta_n$  of zonal domain  on $S^2$ of a random harmonic polynomial of large degree $n$   satisfies the upper bound  $\delta_n <0.29 n^2$.

Section \ref{s: main1} contains the proof of the asymptotic  estimate (\ref{eq: main1}) and Section \ref{s: asy} contains the  proof of  the estimate (\ref{eq: main2}).

In our experience, many basic probabilistic technologies are not that familiar to an audience with a more geometric background. With this audience in mind   we decided to include  in Appendix \ref{s: gauss} a coordinate-free, brief survey of several facts about Gaussian measures and Gaussian processes   in a form adapted to the applications in this paper.  Appendix \ref{s: rand-sym} contains   a detailed  description of  a $3$-parameter  family of Gaussian measures on the space $\eS_m$ of real, symmetric $m\times m$ matrices. These measures  play a central role in the proof of (\ref{eq: main1})  and we could not find an appropriate  reference for the mostly elementary facts discussed in this  appendix. Appendix \ref{s: b} contains the computations of  a  Gaussian integral  involving random $2\times 2$ matrices.
\section*{Acknowledgments}

I  have greatly benefited  from discussions  with \fbox{Jianguo Cao},\footnote{He suddenly and untimely passed  away in June 2011. I will miss his generosity and  expertise.}  Richard Hind, Dima Jakobson and Steve   Zelditch  on  various parts of this  paper.

\section*{Notations}

\begin{enumerate}

\item For any random variable $\xi$ we denote by $\bsE(\xi)$ and respectively $\var(\xi)$ its expectation  and respectively its variance.

\item $\eS_m$ denotes the space of symmetric $m\times m$ real matrices.

\item For any finite dimensional real  vector space $\bsV$ we denote by $\bsV\dual$ its dual, $\bsV\dual:=\Hom(\bsV,\bR)$.

\item For any Euclidean space $\bsV$, we denote by $S(\bsV)$ the unit sphere in $\bsV$ centered at the origin and by $B(\bsV)$ the unit ball in $\bsV$ centered at the origin.

\item We will  denote by $\bsi_n$ the ``area'' of the  round  $n$-dimensional sphere $S^n$ of radius $1$, and by $\bom_n$  the  ``volume'' of the  unit ball in $\bR^n$.  These quantities are uniquely determined by the equalities (see \cite[Ex. 9.1.11]{N0})
\begin{equation*}
\bsi_{n-1}=n\bom_n=2\frac{\pi^{\frac{n}{2}}}{\Gamma(\frac{n}{2})}=\frac{n\pi^{\frac{n}{2}}}{\Gamma\bigl(1+\frac{n}{2}\bigr)},\;\;\Gamma\left(\frac{1}{2}\right)=\sqrt{\pi},
\tag{$\si$}
\label{tag: si}
\end{equation*}
where $\Gamma$  is Euler's Gamma function.

\item If $\bsV_0$ and $\bsV_1$ are two  Euclidean spaces of dimensions $n_0,n_1<\infty$ and $A:\bsV_0\ra \bsV_1$ is a linear map, then the \emph{Jacobian} of $A$ is the nonnegative scalar $J(A)$   defined as the norm of the linear map
\[
\Lambda^k A: \Lambda^k\bsV_0\ra \Lambda^k\bsV_1,\;\;k:=\min(n_0,n_1).
\]
More concretely, if  $n_0\leq n_1$,  and $\{\be_1,\dotsc,\be_{n_0}\}$ is an orthonormal basis of $\bsV_0$, then
\begin{equation*}
J(A)= \bigl(\,\det G(A)\,\bigr)^{1/2},
\tag{$J_-$}
\label{tag: j-}
\end{equation*}
where $G(A)$ is the $n_0\times n_0$ Gramm matrix with entries
\[
G_{ij}=\bigl(\, A\be_i, A\be_j\,\bigr)_{\bsV_1}.
\]
If $n_1\geq n_0$ then
\begin{equation*}
J(A)=J(A^\dag)=\bigl(\,\det G(A^\dag)\,\bigr)^{1/2},
\tag{$J_+$}
\label{tag: j+}
\end{equation*}
where $A^\dag$ denotes the adjoint (transpose) of $A$.   Equivalently,  if $d{\rm Vol}_i\in \Lambda^{n_i}\bsV_i^*$ denotes the  metric volume form  on $\bsV_1$, and $d{\rm Vol}_A$ denotes the metric volume  form on $\ker A$, then  $J(A)$ is the positive number such that
\begin{equation*}
d{\rm Vol}_0= \pm d{\rm Vol}_A\wedge  A^*d{\rm Vol}_1.
\tag{$J_+'$}
\label{tag: j+p}
\end{equation*}

\end{enumerate}

\section{A Kac-Rice type formula}
\label{s: int}
\setcounter{equation}{0}

 \subsection{The key integral formula} 
 \label{ss: CL}
 
 As  we mentioned in the introduction, a key component in the proof   of Theorem \ref{th: main} is an  integral formula that describes the  expected number of critical points as an integral over the background manifold $M$.  
 The literature     on random fields  contains many formul{\ae} of this type, and their proofs  follow  the strategy pioneered by M. Kac and S. Rice, \cite{AT, AzWs, Kac,Rice}.   
 
 We  believe that  it would greatly benefit  a  reader less fluent in the probabilistic language   to  first see  the   geometric origins  of these formulae.  For this reason  we decided  to include a complete  proof of  these formulae in our special case.  Not surprisingly the ubiquitous  double-fibration trick       in   integral geometry, \cite{APF, GGS, N0} will carry the day.  As a matter of fact, our  main integral formula (\ref{eq: KR}) contains  as  special cases the integral formul{\ae} of Chern-Lashof, \cite{CL} and  Milnor, \cite{Mil}.  
 
  Suppose  that $M$ is a smooth, compact, connected  manifold without boundary. Set $m:=\dim M$. 
 
 \begin{definition} (a) For any  nonnegative integer $k$, any point $\bp\in M$ and any $f\in C^\infty(M)$ we will denote by $j_k(f,\bp)$ the $k$-th jet of $f$ at $\bp$.

 \noindent (b)  Suppose that  $\bsU\subset  C^\infty(M)$ is a  linear subspace.  If $k$ is nonnegative integer then we say    that  $\bsU$ is $k$-\emph{ample} if for any $\bp\in M$ and any $f\in C^\infty(M)$ there exists $\bu\in\bsU$ such that
 \[
 j_k(\bu,\bp)=j_k(f,\bp).\proofend
 \]
\label{def: ample}
\end{definition}
 Fix   a finite dimensional vector space   $\bsU\subset  C^\infty(M)$ and set $N:=\dim\bsU$. We have an evaluation map
 \[
 \ev=\ev^{\bsU}: M\ra \bsU\dual:=\Hom(\bsU,\bR), \;\;\bp\mapsto\ev_\bp,
 \]
 where for any  $\bp\in M$ the linear map $\ev_\bp:\bsU\ra \bR$ is given by
 \[
 \ev_\bp(\bu)=\bu(\bp),\;\;\forall \bu\in \bsU.
 \]
For any $\bu\in C^\infty(M)$ we denote by  $\eN(\bu)$ the number of critical points  of $\bu$. In the remainder of this section we will assume  that $\bsU$ is $1$-ample.     This implies that   the evaluation map $\ev^{\bsU}$ is an immersion. Moreover, as explained in \cite[Cor. 1.26]{N1}, the $1$-ampleness  condition also implies  that  almost all functions $\bu\in \bsU$ are  Morse functions and thus    $\eN(\bu) <\infty$   for almost all $\bu\in\bsU$.

  We fix an inner product  $h=(-,-)_h$ on $\bsU$ and we denote by $|-|_h$ the resulting Euclidean norm. Using the metric $h$ we  identify $\bsU$ with its dual and thus we can regard the evaluation map as a smooth map $\ev:M\ra \bsU$.  We define the  expected number of critical points  of a function in $\bsU$ to be the  quantity
   \begin{equation}
   \eN(\bsU, h):= \frac{1}{\bsi_{N-1}}\int_{S(\bsU)} \eN(\bu)\, |dA_h(\bu)|=\int_{\bsU} \eN(\bu) \underbrace{\frac{ e^{-\frac{|\bu|_h^2}{2}}}{(2\pi)^{\frac{N}{2}}} \,|dV_h(u)|}_{=:d\bgamma_h(\bu)},
   \label{eq: expect}
   \end{equation}
   where $\bsi_{n-1}$  denotes the  ''area'' of the  unit sphere in $\bR^n$,  $|dA_h|$ denotes the  ''area'' density on the unit sphere  $S(\bsU)$, and $|dV_h(\bu)|$ denotes the volume density on $\bsU$  determined by the metric $h$.  A priori, the expected number of critical points could be infinite, but in any case, it is independent of any choice of  metric on $M$. The space $\bsU$ equipped  with the Gaussian probability measure $d\bgamma_h$ is a probability  space. We denote by $\eN_{U}$ the   random variable $\bsU\ni \nu\mapsto \eN(\bu)\in\bZ$ so that
  \[
  \eN(\bsU,h)=\bsE(\eN_{\bsU}, d\bgamma_h),
  \]
  where $\bsE(-,d\bgamma_h)$ denotes the expectation computed with respect to the probability measure   $d\bgamma_h$. We will refer to the pair $(\bsU, h)$ as the \emph{sample space}.

Fix a metric  $g$ on $M$.   We will express  $\eN(\bsU, h)$ as an integral
  \[
  \int_M \rho_g(\bp) \,|dV_g(\bp)|.
  \]
  The function $\rho_g$ does \emph{depend on $g$}, but the density  $\rho_g(\bp)\,|dV_g(\bp)|$ \emph{is independent of $g$}.   The  concrete description of $\rho_g(\bp)$    relies on  several  fundamental objects  naturally associated to the   triplet  $(\bsU,h, g)$.

  For any $\bp\in M$  we  set
  \[
  \bsU_\bp^0:=\bigl\{ \bu\in\bsU;\;\; d\bu(\bp)=0\bigr\}.
  \]
  The $1$-ampleness assumption on $\bsU$ implies that for any $\bp\in M$ the subspace $\bsU^0_\bp$ has codimension $m$ in $\bsU$ so that $\dim \bsU_\bp^0= N-m$. Denote by $dA_{S(\bsU_\bp^0)}$ the area density along the unit sphere $S(\bsU^0_\bp)\subset \bsU^0$.
 
 The differential  of the evaluation map at $\bp$  is a linear map $\eA_\bp: T_\bp M\ra \bsU$. We will refer to $\eA_\bp$ as the \emph{adjunction map} and we will denote by $J_g(\bp)=J_g(\bp, \bsU)$ its Jacobian. More precisely, if   $(\be_1,\dotsc,\be_m)$ is a $g$-orthonormal basis of $T_\bp M$,  then
  \[
  J_g(\bp)^2=\det \Bigl[\, \bigl(\,\eA_\bp \be_i, \eA_\bp \be_j\,\bigr)_h\,\Bigr]_{1\leq i,j\leq m}.
  \]
Since $\ev^\bsU$ is an immersion we have  $J_g(\bp)\neq 0$, $\forall \bx\in M$.

For any $\bp\in M$ and any  $\bu\in\bsU_\bp^0$, the Hessian of $\bu$ at $\bp$ is a well defined  symmetric bilinear form on $T_\bp M$ that can be identified  via the metric $g$ with a symmetric  endomorphism $\Hess_\bp(\bu,g)$ of $T_\bp M$. We denote this symmetric endomorphism by $\Hess_\bp(\bu, g)$.   

\begin{theorem}  If $(\bsU, h)$ is a $1$-ample   sample space on $M$, then
\begin{equation}
\begin{split}
\eN(\bsU, h) &=\frac{1}{\bsi_{N-1}}\int_M\frac{1}{J_g(\bp)}\left(\int_{S(\bsU_\bp^0)}  |\det\Hess_\bp(\bv,g)|\,|dA_{S(\bsU^0_\bp)}(\bv)|\right) |dV_g(\bp)|\\
&=(2\pi)^{-\frac{m}{2}}\int_M\frac{1}{J_g(\bp)}\,\underbrace{\left(\int_{\bsU_\bp^0} |\det\Hess_\bp(\bu,g)| \frac{e^{-\frac{|\bu|_h^2}{2}}}{(2\pi)^{\frac{N-m}{2}}} |dV_h(\bu)|\right)}_{=:I_\bp}\,|dV_g(\bp)|.
\end{split}
\label{eq: KR}
\end{equation}
\end{theorem}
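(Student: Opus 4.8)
The plan is to run the Chern--Lashof total-curvature argument in the form of a double fibration, so that the count of critical points becomes the cardinality of a generic fiber of a Gauss map. The key remark is that for $\bu\in\bsU$, $\bp\in M$ and $X\in T_\bp M$ we have $d\bu(\bp)(X)=(\eA_\bp X,\bu)_h$, since $\bu(\bp')=(\ev_{\bp'},\bu)_h$ under the $h$-identification $\bsU\cong\bsU\dual$. Hence $\bp$ is a critical point of $\bu$ precisely when $\bu$ is orthogonal to $\eA_\bp(T_\bp M)$, i.e. when $\bu$ is a \emph{unit normal} to the immersed submanifold $\ev(M)\subset\bsU$ at $\ev(\bp)$; equivalently $\bu\in S(\bsU^0_\bp)$, and because $\ev$ is an immersion the space $\bsU^0_\bp$ is exactly the normal space, of dimension $N-m$. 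I would therefore introduce the unit normal sphere bundle of $\ev$,
\[
Z:=\bigl\{\,(\bp,\bu)\in M\times S(\bsU)\ :\ \bu\in S(\bsU^0_\bp)\,\bigr\},
\]
with the projection $\pi:Z\to M$, $\pi(\bp,\bu)=\bp$ (a fiber bundle with fiber $S(\bsU^0_\bp)\cong S^{N-m-1}$, so $\dim Z=N-1=\dim S(\bsU)$) and the spherical Gauss map $\lambda:Z\to S(\bsU)$, $\lambda(\bp,\bu)=\bu$. For almost every $\bu$ the function $\bu$ is Morse, so $\eN(\bu)=\#\lambda^{-1}(\bu)$, and the area formula applied to $\lambda$ gives
\[
\int_{S(\bsU)}\eN(\bu)\,|dA_h(\bu)|=\int_{S(\bsU)}\#\lambda^{-1}(\bu)\,|dA_h(\bu)|=\int_Z J_\lambda\,|dV_Z|,
\]
where $J_\lambda$ is the Jacobian of $\lambda$ and $|dV_Z|$ is the volume density of $Z$ in the metric induced by $\ev$.

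The heart of the matter is the computation of $J_\lambda$. At a point $(\bp,\bu)$ I would split $T_{(\bp,\bu)}Z$ into the vertical subspace (curves $\bu(t)$ moving inside the fiber $S(\bsU^0_\bp)$ with $\bp$ fixed) and a horizontal subspace (lifts of $T_\bp M$ along which $\bu$ is parallel for the normal connection $\nabla^\perp$ of the immersion). Since $T_\bu S(\bsU)=\bu^\perp=T_\bp M\oplus T_\bu S(\bsU^0_\bp)$ splits compatibly, the Weingarten equation shows that $d\lambda$ is the identity on the vertical part and equals $-S_\bu$ on the horizontal part, where $S_\bu$ is the shape operator of $\ev$ in the direction $\bu$. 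Thus $d\lambda$ is block diagonal and $J_\lambda=|\det S_\bu|$, the determinant being taken with respect to the induced metric $\ev^*h$. Because $\ev$ immerses $M$ into the \emph{Euclidean} space $\bsU$ and $\bp$ is critical for the height function $\bu=(\ev(\cdot),\bu)_h$, the second fundamental form in the direction $\bu$ is exactly the (connection-independent) Hessian bilinear form of $\bu$ at $\bp$. Consequently $S_\bu$ and $\Hess_\bp(\bu,g)$ represent the \emph{same} bilinear form, raised to an endomorphism of $T_\bp M$ by $\ev^*h$ and by $g$ respectively.

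It remains to transport the identity from the induced metric $\ev^*h$ to the reference metric $g$. Working in a $g$-orthonormal frame of $T_\bp M$ one has $\det(\ev^*h)=J_g(\bp)^2$, so that $|dV_Z|=J_g(\bp)\,|dV_g(\bp)|\,|dA_{S(\bsU^0_\bp)}(\bu)|$ and $|\det S_\bu|=J_g(\bp)^{-2}|\det\Hess_\bp(\bu,g)|$. Substituting these into the area formula, the two powers of $J_g$ collapse to a single factor $J_g(\bp)^{-1}$, and dividing by $\bsi_{N-1}$ yields the first equality of (\ref{eq: KR}). The second, Gaussian, equality then follows by polar coordinates on $\bsU^0_\bp$: the integrand $|\det\Hess_\bp(\bu,g)|$ is homogeneous of degree $m$ in $\bu$, so integrating it against the standard Gaussian separates into the spherical integral over $S(\bsU^0_\bp)$ times a radial integral, and the numerical prefactors reorganize into the identity $\bsi_{N-1}^{-1}=(2\pi)^{-m/2}(2\pi)^{-(N-m)/2}\int_0^\infty r^{N-1}e^{-r^2/2}\,dr$.

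The main obstacle is the Jacobian computation of the second paragraph: one must set up the normal connection and the horizontal/vertical decomposition of $TZ$ carefully, confirm that the vertical block is genuinely the identity while the horizontal block is the shape operator, and track with precision which metric is used to raise indices at each stage --- for it is exactly the mismatch between $\ev^*h$ and $g$ that manufactures the factor $J_g(\bp)^{-1}$ appearing in (\ref{eq: KR}). A secondary point requiring care is the justification of the area formula together with the almost-everywhere identity $\eN(\bu)=\#\lambda^{-1}(\bu)$; this rests on the non-Morse functions forming a null subset of $S(\bsU)$, which follows from the $1$-ampleness hypothesis and Sard's theorem.
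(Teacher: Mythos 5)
Your proposal is correct, and its skeleton is the same as the paper's: the incidence variety $\eI=\{(\bp,\bv): d\bv(\bp)=0\}$ of the paper \emph{is} your unit normal sphere bundle $Z$ (the reproducing identity $d\bu(\bp)X=(\eA_\bp X,\bu)_h$ identifies $\bsU^0_\bp$ with the normal space of $\ev$), the paper's right projection $\brho$ is your spherical Gauss map $\lambda$, and both proofs reduce $\eN(\bsU,h)$ to $\int \#\brho^{-1}(\bv)\,|dA_h(\bv)|$ via the (co)area formula, then convert to the final Gaussian form by the same polar-coordinate homogeneity argument (your radial identity for $\bsi_{N-1}^{-1}$ checks out against the paper's (\ref{eq: int-ball}) and (\ref{tag: si})). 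Where you genuinely diverge is in the central Jacobian computation. The paper never touches the normal connection or the Weingarten map: it cuts $\eI$ out locally by the functions $U_i(\bp,\bv)=(\bu_i(\bp),\bv)_h$, expresses $J_\brho=\Delta_U/\|\omega_U\|$ with $\omega_U=dU_1\wedge\cdots\wedge dU_m$, proves the Sublemma $J_\blam=1/\|\omega_U\|$ so that the unknown norm $\|\omega_U\|$ cancels when pushing forward along $\blam$, and only then recognizes $\Delta_U=|\det\bXi_\bp\cdot\bv|$ as a shape-operator determinant; the factor $1/J_g(\bp)$ emerges at the very end by first specializing to the induced metric $\bsi=\ev^*h$ (recovering Chern--Lashof), observing that the resulting density $|d\mu_\bsU|$ is metric-independent, and re-expanding it in an arbitrary $g$. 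You instead block-diagonalize $d\lambda$ directly via the horizontal/vertical splitting and the Weingarten equation, getting $J_\lambda=|\det S_\bu|$ in one stroke, and manufacture $1/J_g$ by tracking the mismatch between $\ev^*h$ and $g$ when raising indices. Your route is shorter and more classical, but it obliges you to set up the normal connection and to verify that your Sasaki-type metric on $Z$ makes $|dV_Z|$ factor as $J_g(\bp)\,|dV_g(\bp)|\,|dA_{S(\bsU^0_\bp)}(\bu)|$ --- which is precisely the content the paper extracts from its Sublemma, using instead the submanifold metric induced from $M\times S(\bsU)$ (the two metrics on $Z$ differ, but the area formula is metric-covariant, so this is harmless as long as $J_\lambda$ and $|dV_Z|$ are computed consistently, as you note). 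What the paper's defining-function bookkeeping buys in exchange for its greater length is that it handles both projections of the double fibration by one mechanism, requires no second-fundamental-form formalism, and produces along the way the intrinsic, metric-free density $|d\mu_\bsU|$ that feeds directly into the probabilistic reformulations of Subsection \ref{ss: prob-int}.
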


\begin{proof}   Denote by $\underline{\bsU}_M$ the trivial vector bundle over $M$ with fiber $\bsU$, $\underline{\bsU}_M :=(\bsU\times M\ra M)$.   For any $\bp\in M$ we denote by $\bsK_\bp$ the orthogonal complement of $\bsU^0_\bp$ in  $\bsU$.
 
 \begin{lemma} The subspace  $\bsK_\bp$  coincides with the range of the adjunction map $\eA_\bp$.
 \end{lemma} 
 
 \begin{proof}   Indeed, if $(\Psi_n)_{1\leq n\leq N}$ is an orthonormal basis of $(\bsU, h)$, then
\[
\ev_\bp=\sum_n \Psi_n(\bp)\Psi_n\in \bsU.
\]
and for any  vector  field $X$ on $M$ we have
\[
\eA_\bp X_\bp= \sum_n (X\Psi_n)_{\bp}\Psi_n.
\]
Thus, the function $\bu=\sum_n u_n\Psi_n\in\bsU$, $u_n\in\bR$,  belongs  to $\bsK_\bp^\perp$ if and only if for any vector field $X$ on $M$ we have
\[
0=\sum_n u_n (X\Psi_n)_\bp= X\cdot \bu(\bp) \Llra \bu\in \bsU^0_\bp.
\]
\end{proof}
This proves that the collection $(\bsK_\bp)$ defines a subbundle $\bsK$ of $\underline{\bsU}_M$  and the adjunction map induces an isomorphism of vector bundle $\eA:TM\ra \bsK$.  We deduce that   the collection of spaces $(\bsU_\bp^0)_{\bp\in M}$  also forms a vector subbundle  $\bsU^0$ of the trivial bundle $\underline{\bsU}_M$ and we have an orthogonal  direct sum decomposition
\[ 
\underline{\bsU}_M=\bsU^0\oplus \bsK.
\]
For any section $u$ of $\underline{\bsU}_M$ we denote by $u^0$ its $\bsU^0$-component.

The  bundle $\underline{\bsU}_M$ is equipped with a canonical trivial  connection $D$. More precisely, if we regard a section of $u$ of  $\underline{\bsU}_M$ as a smooth map $u: M\ra \bsU$, then for any vector field $X$ on $M$    we  define $D_X u$ as the smooth function $M\ra \bsU$ obtained by derivating $u$ along $X$. The \emph{shape operator} of the  subbundle $\bsK$ is the bundle morphism $\bXi: TM\otimes \bsK\ra \bsU^0$ defined by the equality
\[
\bXi(X, \bu):= (D_X\bu)^0,\;\;\forall X\in C^\infty(TM),\;\;\bu\in C^\infty(\bsK).
\]
For every $\bp\in M$, we denote by $\bXi_\bp$ the induced linear map $\bXi_\bp: T_\bp M\otimes \bsK_\bp \ra \bsU^0$. If we denote by $\Gr_m(\bsU)$ the  Grassmannian of $m$-dimensional subspaces  of $\bsU$, then we have a  Gauss  map
\[
M\ni\bp \stackrel{\eG}{\longmapsto} \eG(\bp):=\bsK_\bp \in \Gr_m(\bsU).
\]
The  shape operator $\bXi_\bp$ can be viewed as a linear map
\[
\bXi_\bp : T_\bp M\ra \Hom(\bsK_\bp,\bsU^0_\bp)= T_{\bsK_\bp}\Gr_m(\bsU),
\]
  and, as such, it  can be identified with the differential of $\eG$ at $\bp$, \cite[\S 9.1.2]{N0}. Any  $\bv\in\bsU_\bp^0$ determines  a bilinear map
 \[
 \bXi_\bp\cdot\bv:  T_\bp M\otimes \bsK_\bp \ra\bR,\;\;\bXi_\bp\cdot\bv(\be, \bu):= \bigl(\,\bXi_\bp(\be,\bu), \bv\,\bigr)_h,
 \]
  By choosing   orthonormal bases  $(\be_i)$ in $T_\bp M$ and $(\bu_j)$ of $\bsK_\bp$ we can identify this bilinear form with an  $m\times m$-matrix.   This matrix depends on the choices of bases, but   the absolute value of its determinant is independent of these bases.   It is thus an invariant of the pair $(\bXi_\bp, \bv)$ that we will  denote by $|\det\bXi_\bp\cdot \bv|$.  
 
 \begin{lemma}
 \begin{equation}
\eN(\bsU, h)= \frac{1}{\bsi_{N-1}}\int_M{\left(\,\int_{S(\bsU^0_\bp)}|\det\bXi_\bp\cdot\bv|\, |dA_{S(\bsU^0_\bp)}(\bv)|\,\right)}\, |dV_g(\bp)|.
 \label{eq: av5}
\end{equation}
\end{lemma}
\begin{proof} Consider the incidence  variety
\[
\eI := \bigl\{ (\bp,\bv)\in M\times S(\bsU);\;\; d\bv(\bp)=0\,\bigr\}=\bigl\{ (\bx,\bv)\in M\times S(\bsU);\;\; \bv\in S(\bsU^0_\bp)\,\bigr\}.
\]
We have a natural double ``fibration''
\[
M\stackrel{\blam}{\longleftarrow} \eI\stackrel{\brho}{\longrightarrow} S(\bsU),
\]
where  the left/right projections $\blam,\brho$ are the canonical projections. The left projection $\blam:\eI\ra M$ describes $\eI$ as the unit sphere bundle associated to the  metric vector bundle $\bsU^0$. In particular, this shows that $\eI$ is a compact, smooth manifold of dimension $(N-1)$.  For generic $\bv\in S(\bsU)$ the  fiber $\rho^{-1}(\bv) $ is  finite  and can be  identified with  the set of critical points of $\bv:M\ra \bR$. We deduce
\begin{equation}
\eN(\bsU,h)=\frac{1}{{\rm area}\,(\,S(\bsU)\,)}\int_{S(\bsU)} \#\brho^{-1}(\bv)\, |dA_h(\bu)|.
\label{eq: av1}
\end{equation}
Denote by $g_\eI$ the metric on $\eI$ induced by the metric on $M\times S(\bsU)$ and by $|dV_\eI|$ the induced volume density.    The coarea   formula, \cite[\S 13.4]{BZ}, implies  that
\begin{equation}
\int_{S(\bsU)} \# \brho^{-1}(\bv) |dA_h(\bv)|=\int_\eI J_\brho(\bp,\bv) |dV_\eI(\bp,\bv)|,
\label{eq: av}
\end{equation}
where   the nonnegative  function $J_\brho$ is the Jacobian of $\brho$ defined by the equality
\[
\brho^*|dA_h|=J_\brho \cdot |dV_\eI|.
\]
To compute the integral in the right-hand side of (\ref{eq: av}) we need a  more explicit description of the geometry of the incidence variety $\eI$.

Fix a local orthonormal frame $(\be_1,\dotsc, \be_m)$ of $TM$  defined  in a neighborhood $\eO$ in $M$ of a given point $\bp_0\in M$. We denote by $(\be^1,\dotsc, \be^m)$ the dual co-frame of $T^*M$.  Set
\[
\bsf_i(\bp):=\eA_\bp\be_i(\bp)\in\bsU,\;\;i=1,\dotsc, m,\;\;\bp\in\eO.
\]
More explicitly, $\bsf_i(\bu)$ is defined by the equality
\begin{equation}
\bigl(\, \bsf_i(\bp),\bv\,\bigr)_h=\pa_{\be_i}\bu(\bp) ,\;\;\forall\bu\in\bsU.
\label{eq: fi}
\end{equation}
Fix a neighborhood $\eU\subset \lambda^{-1}(\eO)$ in $M\times S(\bsU)$  of the point  $(\bp_0,\bv_0)$,      and  a   local orthonormal frame $\bu_1 (\bp,\bv),\dotsc, \bu_{N-1}(\bp,\bv)$ over $\eU$ of the bundle $\rho^*TS(\bsU)\ra M\times S(\bsU)$ such that the following hold.

\begin{itemize}

\item The vectors $\bu_1(\bp,\bv),\dotsc,\bu_m(\bp,\bv)$ \emph{are independent  of the variable} $\bv$ and form an orthonormal basis of $K_\bx^\perp$. (E.g.,   we can obtain such vectors  from the vectors $\bsf_1(\bp),\dotsc, \bsf_m(\bp)$ via the  Gramm-Schmidt  process.)

\item  For $(\bp,\bv)\in \eU$, the space   $T_{\bp}E_{\bx}$ is spanned by  the vectors $\bu_{m+1}(\bp, \bv),\dotsc, \bu_{N-1}(\bp,\bv)$.

\end{itemize}

The   collection $\bu_1(\bp),\dotsc,\bu_m(\bp)$ is a collection of smooth sections of $\underline{\bsU}_M$ over $\eO$.  For any $\bp\in \eO$ and any $\be\in T_\bp M$,  we obtain    the vectors (functions).
\[
D_{\be}\bu_1(\bp),\dotsc, D_{\bp}\bu_m(\bx)\in \bsU,
\]
where  we recall that $D$ denotes the  trivial connection on $\underline{\bsU}_M$. Observe that
\begin{equation}
\eI\cap\eU=\bigl\{ (\bp,\bv)\in \eU;\;\; U_i(\bp,\bv)=0,\;\;\forall i=1,\dotsc, m\,\bigr\},
\label{eq: inc-eq}
\end{equation}
where $U_i$ is the function $U_i:\eO\times\bsU\ra \bR$ given by
\[
U_i(\bp,\bv):=\bigl(\, \bu_i(\bp),\bv\,\bigr)_{h}.
\]
Thus, the tangent  space  of  $\eI$ at $(\bp,\bv)$  consists of tangent vectors $\dot{\bp}\oplus \dot{\bv}\in  T_\bx M\oplus T_{\bv} S(\bsV)$ such that
\[
dU_i(\dot{\bp},\dot{\bv})=0,\;\;\forall i=1,\dotsc, m.
\]
We let $\omega_U$ denote the $m$-form
\[
\omega_U:= dU_1\wedge \cdots \wedge dU_m\in \Omega^m(\eU),
\]
and we denote  by  $\|\omega_U\|$   its norm  with respect to the product metric on $M\times S(\bsU)$.  Denote by $|\widehat{dV}|$ the volume density on $M\times S(\bsU)$ induced by the product metric. The equality (\ref{eq: inc-eq}) implies that
\[
|\widehat{dV}|=\frac{1}{\|\omega_U\|} \left|\omega_U\wedge dV_E\,\right|.
\]
Hence
\[
J_\brho|\widehat{dV}|= \frac{1}{\|\omega_U\|}|\omega_U\wedge \brho^* dA|.
\]
 We deduce
\[
J_\brho(\bp_0,\bv_0)=J_\brho(\bp_0,\bv_0)|\widehat{dV}|(\be_1,\dotsc,\be_m, \bu_1, \dotsc,\bu_{N-1})
\]
\[
= \frac{1}{\|\omega_U\|}| \omega_U\wedge \brho^* dS| (\be_1,\dotsc,\be_m, \bu_1, \dotsc,\bu_{N-1})=\frac{1}{\|\omega_U\|}\underbrace{\left| \omega_U\bigl(\, \be_1,\dotsc,\be_m\,\bigr)\right|_{(\bp_0,\bv_0)}}_{=:\Delta_U(\bp_0,\bv_0)}.
\]
Hence,
\begin{equation}
\int_{S(\bsU)} \# \brho^{-1}(\bw) |dA_h(\bv)|=\int_\eI \frac{\Delta_U}{\|\omega_U\|}\, |dV_\eI(\bp,\bv)|.
\label{eq: av3}
\end{equation}

\begin{slemma}  We have the equality 
\begin{equation}
J_\blam=\frac{1}{\|\omega_U\|},
\label{eq: prod-dens}
\end{equation}
where $J_\blam$ denotes the  Jacobian of the projection $\blam: \eI\ra M$.
\label{lemma: prod-dens}
\end{slemma}

\begin{proof}[Proof of Sublemma \ref{lemma: prod-dens}]   Along $\eU$  we have
\[
|\widehat{dV}|= \frac{1}{\|\omega_U\|} \left|\omega_U\wedge dV_\eI\,\right|,
\]
while  the definition of the Jacobian implies that 
\[
|dV_\eI| = \frac{1}{J_\blam}|dV_g\wedge dA_{S(\bsU^0_\bp)}|.
\]
 Therefore, it suffices to show that   along $\eU$ we have
\[
|\widehat{dV}|= |\omega_U\wedge dV_g\wedge dA_{S(\bsU^0_\bp)}|, 
\]
i.e.,
\[
\left|\omega_U\wedge  dV_g\wedge dA_{S(\bsU^0_\bp)}(\be_1,\dotsc, \be_m,\bu_1,\dotsc,\bu_{N-1})\,\right|=1.
\]
Since $dU_i(\bu_k)=0$, $\forall k\geq m+1$ we deduce  that
\[
\left|\omega_U\wedge  dV_g\wedge d A_{S(\bsU^0_\bp)}(\be_1,\dotsc, \be_m,\bu_1,\dotsc,\bu_{N-1})\,\right|=|\omega_U(\bu_1,\dotsc,\bu_m)|.
\]
Thus, it suffices to show that $|\omega_U(\bu_1,\dotsc,\bu_m)|=1$. This follows from the elementary identities
\[
dU_i(\bu_j)=(\bu_i,\bu_j)_{h}=\delta_{ij},\;\;\forall 1\leq i,j\leq m,
\]
where  $\delta_{ij}$ is the Kronecker  symbol.\end{proof}

Using  (\ref{eq: prod-dens}) in (\ref{eq: av3})  and the coarea formula we deduce
\begin{equation}
\int_{S(\bsU)} \# \brho^{-1}(\bw) |dA_h(\bv)|=\int_M\left(\,\int_{S(\bsU^0_\bp)}\Delta_U(\bp,\bv)\, |dA_{S(\bsU^0_\bp)}(\bv)|\,\right)\, |dV_g(\bp)|.
\label{eq: av4}
\end{equation}
Observe   that at a point $(\bp,\bv)\in \lambda^{-1}(\eO)\subset \eI$ we have
\[
dU_i(\be_j)=  \bigl(\, D_{\be_j}\bu_i(\bp),\bv\,\bigr)_{h}.
\]
We can rewrite this in terms of the shape operator $\bXi_\bp: T_\bp M\otimes \bsK_\bp\ra \bsU^0_\bp$.   More precisely,
\[
dU_i(\be_j)= \bigl(\, \bXi_\bp(\be_j,\bu_i),\bv\,\bigr)_{h}.
\]
Hence,
\[
\Delta_U(\bx, \bv)=\left| \det \bigl(\, \bXi_\bp(\be_j,\bu_i), \bv\,\bigr)_h\,\right|,
\]
We conclude that
\[
\int_{S_h(\bsU)} \# \brho^{-1}(\bv) |dA_h(\bv)|=\int_M{\left(\,\int_{S(\bsU_\bp^0)}|\det\bXi_\bp\cdot\bv|\, |dA_{S(\bsU^0_\bp)} (\bv)|\,\right)}\, |dV_M(\bp)|.
\]
This proves (\ref{eq: av5})
\end{proof}

To proceed further observe  that the left-hand side of  (\ref{eq: av5})  is   plainly independent of   the metric $g$ on $M$.     This raises the hope that if we judiciously  choose  the metric on $M$, then  we can  obtain a more manageable   expression for $\mu(M,\bsV)$. One choice presents itself.  

Let  $\bsi$ be   the pullback to $M$  of the metric on $\bsV$ via the immersion $\ev:M\ra \bsU$.  More concretely,  for any $\bp\in M$ and any $X,Y\in T_\bp M$, we have
\[
\bsi_\bp(X,Y)=\bigl(\,\eA_\bp X, \eA_\bp Y\,\bigr)_{h}.
\]
Fix  $\bp\in M$ and a $\bsi$-orthonormal  frame$ (\be_i)_{1\leq i\leq m}$ of $TM$ defined in a neighborhood $\eO$ of $\bp$.    Then  the collection $\bu_j=\eA \be_j$, $1\leq j$, is a local orthonormal frame of $\bsK|_{\eO}$.  The  shape operator has the simple description
\[
\bXi_\bp(\be_i,\bu_j)= \bigl(\,D_{\be_i}\eA\be_j\,\bigr)^0.
\]
Fix an orthonormal basis $(\Psi_n)_{1\leq n\leq N}$ of $\bsU$ so that every $\bv\in \bsU$ has a decomposition
\[
\bv=\sum_\alpha v_n \Psi_n,\;\;v_n\in\bR.
\]
Then
\[
\eA_\bp\be_j(\bp)=\sum_n (\pa_{\be_j} \Psi_n)_\bp\Psi_n,\;\; D_{\be_i}\eA^\dag\be_j(\bp)=\sum_n (\pa^2_{\be_i \be_j}\Psi_n)_\bp \Psi_n,
\]
and
\[
\bigl(\, (D_{\be_i}\eA\be_j)_\bp,\bv\,\bigr)_{h}= \sum_\alpha v_n (\pa^2_{\be_i \be_j}\Psi_\alpha)_\bp  = \pa^2_{\be_i\be_j}\bv(\bp).
\]
If $\bv\in \bsU^0_\bp$, then the Hessian of $\bv$ at $\bp$ is a well-defined,    symmetric bilinear form $\Hess_\bp(\bv)$ on $T_\bp M$ that can be identified via  the metric  $\bsi$ with a symmetric linear operator
\[
\Hess_\bp(\bv,\bsi) : T_\bp M\ra T_\bp M.
\]
If we fix a $\bsi$-orthonormal frame $(\be_i)$ of $T_\bp M$, then  the operator $\Hess_\bp(\bv,\bsi)$  is described by the symmetric $m\times m$ matrix with entries $\pa^2_{\be_i\be_j}\bv(\bx)$. We deduce that
\[
\left|\det\bXi_\bp \cdot \bv\right|=\left| \det \Hess_\bp(\bv,\bsi)\,\right|,\;\;\forall\bv\in S(\bsU^0_\bp).
\]
In particular, we deduce that
\begin{equation}
\eN(\bsU,h)=\frac{1}{\bsi_{N-1}}\int_M\,\left(\,\int_{S(\bsU^0_\bp)} |\det \Hess_\bp(\bv,\bsi)|\,|dA_{S(\bsU^0_\bp)}(\bv)\,\right)|dV_{\bsi}(\bp)|.
\label{eq: av6}
\end{equation}
This  is precisely the  main theorem of Chern and Lashof, \cite{CL}.

Finally, we  want to express (\ref{eq: av6}) entirely  in terms of  the  adjunction map $\eA$.   For any $\bp\in M$ and  any  $\bv\in \bsU_\bp$,  we define    the   density
\[
\mu_{\bp,\bv}:  \Lambda^m T_\bp M\ra \bR,
\]
\[
 \mu_{\bp,\bv}(X_1\wedge \cdots \wedge X_m)=    \left| \det\bigl( \, \pa^2_{X_iX_j}\bv (\bp)\,\bigr)_{1\leq i,j\leq m}\,\right|\,\cdot\,\bigl(\,\det \bigl(\,(\eA_\bp X_i,\eA_\bp X_j)_{h}\,\bigr)_{1\leq i,j\leq m}\,\bigr)^{-1/2}
\]
\[
=\left|\,\det\bigl(\,\Hess_\bp(\bv)(X_i,X_j)\,\bigr)_{1\leq i,j\leq m}\,\right| \cdot \bigl(\,\det\bigl(\, \bsi(X_i,X_j)\,\bigr)_{1\leq i,j\leq m}\,\bigr)^{-1/2},
\]
for any basis $X_1,\dotsc, X_m$ of $T_\bp M$. Observe that for any $\bsi$-orthonormal frame $\be_1,\dotsc, \be_m$ of $T_\bp M$ we have
\[
\mu_{\bp,\bv}(\be_1\wedge\cdots\wedge\be_m)= |\det \Hess_\bp(\bv, \bsi)\,|.
\]
If we integrate $\mu_{\bp,\bv}$ over $\bv\in S(\bsU^0_\bp)$, we obtain a density
\[
|d\mu_{\bsU}(\bp)|:  \Lambda^m T_\bp M\ra \bR,
\]
\[
 |d\mu_{\bsU}(\bp)|(X_1\wedge\cdots \wedge X_m)=\int_{S(\bsU_\bp)}  \mu_{\bp,\bv}(X_1\wedge \cdots \wedge X_m)\,|dA_{S(\bsU^0_\bp)}(\bv)|,
 \]
 $\forall X_1,\dotsc, X_m\in T_\bp M$.

Clearly  $|d\mu_{\bsU}(\bp)|$ varies smoothly with $\bp$, and thus it defines   a density $|d\mu_{\bsU}(-)|$ on $M$.  We want to emphasize that   this density \emph{depends on the metric  on} $\bsU$ but \emph{it is independent} of any metric on $M$. We will refer to it as \emph{the   density of $\bsU$}. By construction
\[
\eN(\bsU,h)=\frac{1}{\bsi_{n-1}}\int_M|d\mu_{\bsU}(\bp)|.
\]
If we  now return to our original  metric $g$ on $M$, then we can express    $|d\mu_{\bsU}(-)|$  as a product 
\[
|d\mu_{\bsU}(\bp)|=\delta_g(\bp)\cdot |dV_g(\bp)|,
\]
where $\delta_g=\delta_{g,\bsU}:M\ra \bR$ is a smooth nonnegative function. 

To find a  more useful description  of $\rho_g$, we choose  local coordinates $(x^1,\dotsc, x^m)$  near $\bp$ such that   $(\pa_{x^i})$ is a $g$-\emph{orthonormal} basis of $T_{\bp} M$. Then
\[
\mu_{\bp,\bv}(\pa_{x_1}\wedge \cdots\wedge\pa_{x_m}) =   \left| \det\bigl( \, \pa^2_{x_ix_j}\bv (\bp)\,\bigr)_{1\leq i,j\leq m}\,\right|\,\cdot\,\bigl(\,\det \bigl(\,(\eA_\bp\pa_{x_i},\eA_\bp \pa_{x_j})_{h}\,\bigr)_{1\leq i,j\leq m}\,\bigr)^{-1/2}.
\]
Observe  that the matrix $( \, \pa^2_{x_ix_j}\bv (\bp)\,\bigr)_{1\leq i,j\leq m}$   describes the Hessian operator
\[
\Hess_\bp(\bv,g): T_\bp M\ra T_\bp M
\]
induced by the Hessian of $\bv$ at $\bp$ and the metric $g$.

The scalar  $\bigl(\,\det \bigl(\,(\eA_\bp \pa_{x_i},\eA_\bp \pa_{x_j})_{h}\,\bigr)_{1\leq i,j\leq m}\,\bigr)^{1/2}$ is precisely the Jacobian $J_g(\bp)$ of the  adjunction map $\eA_\bp: T_\bp M\ra \bsU$ defined in terms of the metric $g$ on $T_\bp M$ and the metric $h$ on $\bsU$.  We set
\[
\Delta_\bx(\bsV,g):=  \int_{S(\bsU^0_\bp)} |\det \Hess_\bx(\bv, {{g}})|\,|dA_{S(\bsU^0_\bp)}(\bv)|.
\]
Since $|dV_g(\bp)|(\pa_{x_1}\wedge\cdots \wedge \pa_{x_m})=1$, we deduce
\begin{equation}
\delta_{g,\bsV}(\bp)= \Delta_\bp(\bsV,g)\cdot J_g(\bp)^{-1}.
\label{eq: rho-mu}
\end{equation}
This proves the first equality in (\ref{eq: KR}). The second equality follows from the first by invoking  (\ref{eq: int-ball}) and  the explicit formula (\ref{tag: si}) for $\bsi_{N-1}$.\end{proof}

\subsection{A Gaussian random field perspective}\label{ss: prob-int} For our concrete purposes it is convenient to give a probabilistic interpretation to the integral formula (\ref{eq: KR}).  For the reader's convenience we have gathered in Appendix \ref{s: gauss} the basic probabilistic notions and facts  needed  in the sequel.

  Consider again the metric  $\bsi=\bsi_\bsU$, the pullback of the metric  $h$ on $\bsU$ via the  evaluation map. We will refer to it as the  \emph{stochastic metric} associated to the sample space $(\bsU,h)$.  It is convenient to have a local description   of the stochastic metric.

Fix an orthonormal basis $\bpsi_1,\dotsc, \bpsi_N$ of  $\bsU$. The evaluation map $\ev^\bsU:M\ra \bsU$   is then  given by
\[
M\ni \bx \mapsto \sum_n \bpsi_n(\bx)\cdot\bpsi_n\in \bsU.
\]
If  $\bp\in M$ and $\eU$ is an open coordinate neighborhood   of $\bp$ with coordinates   $x=(x^1,\dotsc, x^m)$, then
\begin{equation}
\bsi_{\bp}(\pa_{x^i},  \pa_{x^j}) =\sum_n \frac{\pa\bpsi_n}{\pa x^i}(\bp)\frac{\pa\bpsi_n}{\pa x^j}(\bp),\;\;\forall 1\leq i, j \leq m.
\label{eq: sto-met}
\end{equation}
Note that if   the collection $(\pa_{x^i})_{1\leq i\leq m}$ forms a $g$-orthonormal frame of $T_\bp M$, then
\begin{equation}
J_g(\bp)^2= \det\Bigl[\, \bsi_{\bp}(\pa_{x^i},  \pa_{x^j})\Bigr]_{1\leq i,j\leq m}.
\label{eq: jac}
\end{equation}
To  the sample space  $(\bsU,h)$  we  associate in a tautological fashion a  Gaussian  random field on $M$ as follows.  The measure $d\bgamma_h$ in (\ref{eq: expect}) is a probability measure and thus $(\bsU, d\gamma_h)$ is naturally a probability space.  We have a  natural map
 \[
\xi: M\times \bsU\ra \bsR,\;\;M\times \bsU\ni (\bp,\bu)\mapsto \xi_\bp(\bu):=\bu(\bp).
 \]
 The collection of random variables  $(\xi_\bp)_{\bp\in M}$ is a Gaussian  random   field on $M$.  
 
 Using the orthonormal basis $(\bpsi_k)$ of $\bsU$ we  obtain  a linear  isometry
 \[
 \bR^N\ni \bt=(t_1,\dotsc, t_n)\mapsto \bu_{\bt}=\sum_k t_k\bpsi_k\in\bsU,
 \]
with inverse $\bu\mapsto t_k(\bu)=h(\bu,\bpsi_k)$. For any   $\bp\in M$ and any $\bt\in\bR^N$ we have
\[
\xi_\bp(\bu_{\bt})=   \sum_k t_k \bpsi_k(\bp).
\]
 The  covariance kernel  of this  field is the function $\eE=\eE_\bsU:M\times M\ra \bR$ given by
 \begin{equation}
 \begin{split}
 \eE(\bp,\bq)& =\bsE(\xi_\bp,\xi_\bq)=\sum_{j,k=1}^N \left(\int_{\bR^N} t_jt_k d\gamma_N(\bt)\right)\bpsi_j(\bp)\bpsi_k(\bq)\\
 &= \sum_{k=1}^M \bpsi_k(\bp)\bpsi_k(\bq),
 \end{split}
 \label{eq: kern}
 \end{equation}
where $d\gamma_N$ is the canonical  Gaussian measure on $\bR^N$.

 If $\bp\in M$  and $\eU$ is an open coordinate neighborhood of $\bp$ with coordinates $x=(x^1,\dotsc, x^m)$ such that $x(\bp)=0$, then   we can rewrite  (\ref{eq: sto-met}) in terms of the covariance kernel alone
 \begin{equation}
 \bsi_{\bp}(\pa_{x^i},  \pa_{x^j})=\frac{\pa^2 \eE(x,y)}{\pa x^i\pa y^j}|_{x=y=0}.
\label{eq: sto-met10}
\end{equation}
Note that any vector field $X$   determines a  new  Gaussian random field on $M$, the derivative of $\bu$ along $X$.  We  obtain the Gaussian random variables $\bu\mapsto (X\bu)_\bp$, $\bu\mapsto (Y\bu)_\bp$, and  we have
\begin{equation}
\bsi_{\bp}(X,Y) = \bsE\bigl(\, (X\bu)_\bp, (Y\bu)_\bp\,\bigr).
\label{eq:  sto-met1}
\end{equation}
The last equality justifies the attribute stochastic  attached to the metric  $\bsi$.

We denote by $\nabla$ the Levi-Civita    connection      of the metric $g$.    The Hessian   of a smooth function  $f:M\ra \bR$ with  respect to the metric $g$ is the symmetric $(0,2)$-tensor  $\nabla^2f$ on  $M$  defined by the equality
\begin{equation}
\nabla^2f(X,Y) := XYf-(\nabla_XY)f,\;\;\forall X,Y\in \Vect(M).
\label{eq: hess-civita}
\end{equation}
If $\bp$ is a critical point of $f$ then $\nabla^2_\bp f$ is the usual  Hessian    of $f$ at $\bp$.  More generally, if $(x^1,\dotsc, x^m)$ are $g$-\emph{normal} coordinates at $\bp$, then
\[
\nabla^2_\bp f(\pa_{x^i},\pa_{x^j}) =\pa^2_{x^ix^j}f(\bp),\;\;\forall 1\leq i,j\leq m.
\]
For any $\bp\in \bsM$ and any $f\in C^\infty(M)$  we use  the metric $g_\bp$  to identify the bilinear form  $\nabla^2_\bp f$ on $T_\bp M$ with an element of $\eS(T_\bp M)$, the vector space of  symmetric endomorphisms  of the Euclidean space $(T_\bp M, g_\bp)$. For any $\bp\in M$ we have two random Gaussian vectors
\[
\bsU\ni\bu\mapsto \nabla^2_\bp \bu\in \eS(T_\bp M),\;\; \bsU\ni\bu\mapsto d\bu(\bp)\in T^*_\bx M.
\]
Note that the expectation of both  random vectors  are trivial while (\ref{eq: sto-met10}) shows that  the covariance form of $d\bu(\bp)$  is the metric $\bsi_\bp$.

To   proceed further   we need to make an additional assumption  on  the sample space $\bsU$. Namely,  in the remainder of this section we will assume that it is  $2$-\emph{ample}.  In this case    the map
\[
\bsU\ni\bu\mapsto \nabla^2_\bp \bu\in \eS(T_\bp M)
\]
is surjective so  the Gaussian random vector $\nabla^2_\bp \bu$ is  nondegenerate.  A simple application of the co-area formula  shows  that the integral $I_\bp$ in (\ref{eq: KR}) can be expressed as a conditional expectation
\[
I_\bp=\bsE\bigl(\, |\det\nabla^2_\bp \bu|\,\bigl|\, d\bu(\bp)=0\,\bigr).
\]
Observing that
\begin{equation}
J_g(\bp)=(\det \bsS_{d\bu(p)})^{\frac{1}{2}},
\label{eq: jac-cov}
\end{equation}
we deduce that 
\begin{equation}
\eN(\bsU, h) =\frac{1}{(2\pi)^{\frac{m}{2}}}\int_M (\det \bsS_{d\bu(p)})^{-\frac{1}{2}} \bsE\bigl(\, |\det\nabla^2_\bp \bu|\,\bigl|\, d\bu(\bp)=0\,\bigr)\,|dV_g(\bp)|.
\label{eq: KR2.5}
\end{equation}
The last   equality  is   the main  conclusion of the  Expectation Metatheorem, \cite[Thm. 11.2.1]{AT} or the  expectation formula in \cite[Thm. 6.2]{AzWs}.    We can simplify the  equality (\ref{eq: KR2.5})  even more by taking full advantage of the Gaussian  nature of the various random  variables involved in this equality.

The covariance  form  of the pair of random variables $\nabla^2_\bp \bu$ and $d\bu(p)$ is   the bilinear map
\[
\Omega:\eS(T_\bp M)\dual \times T_\bp M\ra \bR,
\]
\[
 \Omega(\xi,\eta)= \bsE\bigl( \lan \xi , \nabla^2_\bp \bu\ran\cdot    \lan d\bu, \eta\ran\,\bigr),\;\;\forall \xi\in \eS_m\dual,\;\;\eta\in T_\bp M.
 \]
 Using the  natural inner products on $\eS(T_\bp M)$ and $T_\bp M$ defined by $g_\bp$ we can regard the  covariance form as a linear operator
 \[
 \bOm_\bp: T_\bp M \ra \eS(T_\bp M).
 \]
Similarly, we can identify the covariance forms  of $\nabla^2_\bp u$ and $du$ with symmetric positive definite  operators
\[
\bsS_{\nabla^2_\bp \bu} :\eS(T_\bp M)\ra \eS(T_\bp M)
\]
and respectively
\[
\bsS_{d\bu(\bp)} : T_\bp M\ra T_\bp M.
\]
 Using the  regression formula (\ref{eq: regress2}) we deduce that
  \begin{equation}
 \bsE\bigl(\, |\det\nabla^2_\bp \bu|\,\bigl|\, d\bu(\bp)=0\,\bigr)=\bsE(|\det Y_\bp|),
 \label{eq: regress4}
 \end{equation}
 where  $Y_\bp:\bsU\ra \eS(T_\bp M)$ is a Gaussian random vector with   mean value zero and covariance operator
 \begin{equation}
 \bXi_\bp=\bXi_{Y_\bp} := \bsS_{\nabla^2_\bp \bu} -\Omega \bsS_{d\bu(\bp)}^{-1}\Omega^\dag:\eS(T_\bp M)\ra \eS(T_\bp M).
 \label{eq: rand-matrix}
 \end{equation}
Since  $\bsU$ is $2$-ample the operator $\bXi_p$ is invertible  and we have
\begin{equation}
\bsE(|\det Y_\bp|)=(2\pi)^{-\frac{\dim \eS(T_\bp)}{2}}(\det \bXi_\bp)^{-\frac{1}{2}} \int_{\eS(T_\bp M)} |\det Y|\,e^{-\frac{(\bXi_\bp^{-1} Y,Y)}{2}}dV_g(Y).
\label{eq: cond-regr}
\end{equation}
 We deduce that \emph{when $\bsU$ is $2$-ample} we have
\begin{equation}
\eN(\bsU, h) =\frac{1}{(2\pi)^{\frac{m}{2}}}\int_M (\det \bsS_{d\bu(p)})^{-\frac{1}{2}} \bsE(|\det Y_\bp|)\,|dV_g(\bp)|,
\label{eq: KR3}
\end{equation}
where $Y_\bp$ is a   Gaussian random     symmetric endomorphism of $T_\bp M$ with expectation $0$ and covariance operator $\bXi_\bp$ described by (\ref{eq: rand-matrix}).

To  compute  the above integral we choose normal coordinates $(x^1,\dotsc, x^n)$ near $\bp$ and thus we can orthogonally identify $T_\bp M$ with $\bR^m$.  We can view the   Hessian   $\nabla^2_\bp \bu$ as a random variable
\[
H^\bp:\bsU\ra \eS_m:= \eS(\bR^m),\;\; \bsU\ni \bu \mapsto  H^\bp(\bu)\in \eS_m,\;\; H^\bp_{ij}(\bu)=\pa^2_{x^ix^j}\bu(\bp),
\]
and the differential $d\bu(\bp)$ as a random variable
\[
D^\bp: \bsU\ra \bR^m,\;\;\bu\mapsto D^\bp \bu \in \bR^m,\;\;D_i^\bp \bu=\pa_{x^i}\bu(\bp).
\]
The covariance operator $\bsS_{d\bu(p)}$ of the random variable $D^\bp$ is given by the symmetric $m\times m$ matrix with entries
\begin{equation}
\bsi_\bp(\pa_{x^i},\pa_{x^j})=\frac{\pa^2 \eE(x,y)}{\pa x^i\pa y^j}|_{x=y=0}.
\label{eq: bsi-loc}
\end{equation}
To compute the covariance form $\bSi_{H^\bp}$  of the  random matrix  $H^\bp$ we  observe first  that we have a canonical basis  $(\xi_{ij})_{1\leq i\leq j\leq m}$ of $\eS_m\dual$  so that $\xi_{ij}$ associates to a symmetric matrix $A$ the entry $a_{ij}$ located in the position $(i,j)$. Then
\begin{equation}
\begin{split}
\bSi_{H^\bp}(\xi_{ij},\xi_{k\ell}) &=\bsE\bigl(\, H^\bp_{ij}(\bu), H^\bp_{k\ell}(\bu)\,\bigr)= \bsE\bigl( \, \pa^2_{x^ix^j}\bu(\bx)\pa^2_{x^kx^\ell} u(\bx)\,\bigr)\\
&=\sum_{n=1}^N \pa^2_{x^ix^j}\bpsi_n(\bx)\pa^2_{x^kx^\ell}\bpsi_n(\bx)= \frac{\pa^4\eE(x,y)}{\pa x^i\pa x^j\pa y^k\pa y^\ell}|_{x=y=0}.
\end{split}
\label{eq: cov-hess}
\end{equation}
Similarly   we have
\begin{equation}
\Omega(\xi_{ij},\pa_{x^k})= \bsE\bigl(\, \pa^2_{x^i x^j}\bu(\bp), \pa_{x^k} \bu(\bp) \,\bigr) =\frac{\pa^3 \eE(x,y)}{\pa x^i\pa x^j \pa y^k}|_{x=y=0}.
\label{eq:  omega}
\end{equation}
To identify $\Omega$ with an operator  it suffices to observe that   $(\pa_{x^k})$ is an orthonormal basis of $T_\bp M$, while the collection  $\{\,\hat{\xi}_{ij}\,\}_{i\leq j}\subset \eS_m\dual$,
\[
\hat{\xi}_{ij}=\begin{cases}
\xi_{ij}, & i=j\\
\sqrt{2}\xi_{ij}, & i<j
\end{cases}
\]
is an orthonormal basis of $\eS_m\dual$. If we denote by $\widehat{E}_{ij}$ the dual  orthonormal basis of $\eS_m$, then
\[
\Omega \pa_{x^k}= \sum_{i\leq j} \Omega(\hat{\xi}_{ij}, \pa_{x^k})\widehat{E}_{ij}.
\]
\begin{remark} If the metric $g$ coincides with the stochastic metric $\bsi$, then  the covariance  operator  $\Omega$ is trivial. For a proof of this and of many other nice properties of the metric $\bsi$ we refer to \cite[\S 12.2]{AT}.\qed  
\label{rem: stoch}
\end{remark}

\subsection{Zonal domains of spherical harmonics of large degree}
\label{ss: zonal}
In the conclusion of this section we  want to discuss an immediate application  of the  above results to critical sets of random spherical harmonics.

Let $(M,g)$ be the  unit round  sphere $S^2$. The  spectrum of the Laplacian on $S^2$ is
\[
\lambda_n= n(n+1),\;\;n=0,1,2,\dotsc, \;\;\dim \ker(\lambda_n-\Delta)= 2n+1=d_n.
\]
The space $\bsU_n=\ker(\lambda_n-\Delta)$ has a well known descrition: it consists of sperical harmonics, i.e., restrictions to $S^2$ of  harmonic   polynomials  of degree $n$ in three variables. We want to   describe the behavior of $\eN(\bsU_n)$ as $n\ra \infty$, where $\bsU_n$ is equipped    with the $L^2$-metric. In other words we     want to find the expected   number of  critical points of a spherical harmonic of very large degree.

In this case   the covariance kernel  $\eE_n(\bp,\bq)$  of $\bsU_n$ has a  very simple description. More precisely, if $(\Psi_k)_{1\leq k\leq 2n+1}$ is an orthonormal basis of $\bsU_n$, then the classical addition theorem, \cite[\S 1.2]{Mu} shows that
\[
\eE_n(\bp,\bq)=\sum_k \Psi_k(\bp)\Psi_k(\bq)=\frac{2n+1}{4\pi} P_n(\bp\bullet\bq),\;\;\forall\bp,\bq\in S^2,
\] 
where $\bullet$ denotes the inner product in $\bR^3$, and $P_n$ denotes the $n$-th Legendre polynomial,
\[
P_n(t)=(-1)^n\frac{1}{2^nn!}\frac{d^n}{dt^n}(1-t^2)^n.
\]
In this case the stochastic metric  $\bsi=\bsi_n$ is obviously $SO(3)$-invariant  and  it is a  (constant) multiple  of the  round metric.  In view of  Remark \ref{rem: stoch}  this implies that for any $\bp\in S^2$ the random variables
\[
\bsU_n\ni \bu\mapsto \Hess_\bp(\bu, g)\;\;\mbox{and}\;\;\bsU_n\ni \bu\mapsto d\bu(\bp)
\]
are independent  and we deduce that
\[
\eN(\bsU_n)=\frac{1}{2\pi}\int_{S^2} \frac{1}{J_g(p)}\left(\int_{\bsU_n} |\det \Hess_\bp(\bu, g))| \underbrace{\frac{e^{-\frac{1}{2}|\bu|^2}}{(2\pi)^{\frac{\dim \bsU_n}{2}}}|d\bu|}_{=:d\bgamma_n(\bu)}\right) dV_g(\bp).
\]
Clearly, the  integrand in the above formula is invariant with respect to the $SO(3)$-action on $S^2$ and we thus have
\begin{equation}
\eN(\bsU_n)=\frac{2}{J_g(\bp_0)}\int_{\bsU_n}  \bigl|\det \Hess_{\bp_0}(\bu,g) \bigr|\, d\gamma_n(\bu),
\label{eq: harm10}
\end{equation}
where $\bp_0$ a  fixed (but arbitrary) point on $S^2$.  To compute the term in the right-hand side of the above equality we use the equalities  (\ref{eq: bsi-loc}) and (\ref{eq: cov-hess}). 

Fix  normal coordinates $(x^1,x^2)$ in a neighborhood $\eO$ of   $\bp_0$ so we can view $\eE_n$ as a function $\eE_n(x,y)$. The location of a point $\bp\in\eO$ is described  by a smooth   function 
\[
\eO\ni (x^1,x^2)\mapsto\bp(x^1,x^2)\in\bR^3. 
\]
The tangent vector $\pa_{x^i}$,  viewed as a  vector in $\bR^3$, corresponds with the derivative $\bp_{x^i}:=\pa_{x^i}\bp$ of the above function. At $\bp_0$ we have
\begin{equation}
\bp_{x^i}\bullet\bp_{x^j}=\delta_{ij}\;\;\mbox{and}\;\;\bp_{x^i}\bullet\bp_0=0,\;\;\forall i,j.
\label{eq: norm-second}
\end{equation}
The  arcs  $C_1=\{x^2=0\}$ and $C_2=\{x^1=0\}$ are portions of great circles intersecting  orthogonally at $\bp_0$.  Note that $x^1$ is  the arclength parameter along $C_i$, $i=1,2$.  The vectors  $\bp_{x^i}$ are unit  tangent vectors along these arcs.    This shows that  at $\bp_0$ we have
\[
\bp_{x^ix^i}=-\bp_0.
\]
Since the arcs  $C_1$ and $C_2$ are planar their torsion is trivial   and the Frenet formul{\ae} imply  that at $\bp_0$ we have
\[
\bp_{x^ix^j}=0,\;\;\forall i\neq j.
\]  
The last two equalities can be rewritten in compact form as
\begin{equation}
\bp_{x^ix^j}=-\delta_{ij}\bp_0,\;\;\forall i,j
\label{eq: norm-second2}
\end{equation}
We set
\begin{equation}
\begin{split}
s_n&:=\frac{2n+1}{\pi} P_n'(1)=\frac{2n+1}{4\pi}\times \frac{n(n+1)}{2}\sim \frac{1}{4\pi}n^3\\
t_n& :=\frac{2n+1}{\pi} P_n''(1)=\frac{(2n+1)}{4\pi}\times\frac{(n+2)(n+1)n(n-1)}{16}\sim\frac{1}{32\pi}n^5.
\end{split}
\label{eq: st}
\end{equation}
We deduce
\begin{equation}
\begin{split}
\bsi(\pa_{x^j},\pa_{x^k}) & =\pa_{x_j}\pa_{y^k}\eE(\bp,\bq)|_{\bp=\bq=\bp_0}\\
& = \frac{2n+1}{4\pi}\left(\, P_n'(\bp\bullet\bq) \bp_{x^j}\bullet\bq_{y_k} +P_n^{(2)}(\bp\bullet\bq)(\bp_{x^j}\bullet\bq)(\bp\bullet\bq_{y^k}) \,\right)_{\bp=\bq=\bp_0}\\
&=s_n\delta_{jk},
\end{split}
\label{eq: bsi-harm}
\end{equation}
and
\begin{equation}
J_g(\bp_0)=s_n.
\label{eq: harm30}
\end{equation}

To compute  $\pa^4_{x^ix^jy^ky^\ell}\eE_n(\bp,\bq)$ at $\bp=\bq=\bp_0$ we will use (\ref{eq: norm-second})  and  (\ref{eq: norm-second2})  to cut down the complexity of the final formula. We deduce that  at $\bp=\bq=\bp_0$ we have
\[
\pa^4_{x^ix^jy^ky^\ell}\eE_n(\bp,\bq)  = \frac{2n+1}{4\pi}\left(\, P_n'(\bp\bullet\bq) \bp_{x^ix^j}\bullet\bq_{y^\ell y^k} +P_n^{(2)}(\bp\bullet\bq)(\bp_{x^ix^j}\bullet\bq)(\bp\bullet\bq_{y^ky^\ell}) \,\right)_{\bp=\bq}
\]
\[
+\frac{2n+1}{4\pi}\left(\, P_n^{(2)}(\bp\bullet\bq) (\bp_{x^i}\bullet\bq_{y^\ell})(\bp_{x^j}\bullet\bq_{y^k}) +P_n^{(2)}(\bp\bullet\bq)(\bp_{x^j}\bullet\bq_{y^\ell})(\bp_{x^i}\bullet\bq_{y^k}) \,\right)_{\bp=\bq},
\]
and  thus
\begin{equation}
\pa^4_{x^ix^jy^ky^\ell}\eE_n(\bp,\bq)_{\bp=\bq}= \bigl(s_n+t_n\bigr)\delta_{ij}\delta_{k\ell} +t_n\bigl(\,\delta_{i\ell}\delta_{jk}+\delta_{ik}\delta_{j\ell}\,\bigr).
\label{eq: harm20}
\end{equation}
Denote by $d\bGamma_n$ the pushforward  of the Gaussian measure  $d\bgamma_n$ via the Hessian map
\[
\bsU_n\ni \bu\mapsto \Hess_{\bp_0}(\bu,g)\in \eS(T_{\bp_0} S^2)=\eS_2.
\]
We deduce  from (\ref{eq: harm20}) that the covariance form $\bSi_n$ of $d\bGamma_n$ satisfies the equality
\[
\bSi_n=\bSi_{a_n,b_n, c_n},\;\; a_n= s_n+3t_n,\;\;b_n=s_n+t_n,\;\;c_n=t_n,
\]
where $\bSi_{a,b,c}$ is defined by the conditions (\ref{eq: cov-diag}) and (\ref{eq: cov-off}). Observe that  $a_n,b_n,c_n$ satisfy (\ref{eq: inv}),i.e., $a_n=b_n+2c_n$. As explained in Appendix \ref{s: rand-sym},   this implies that   $d\bGamma_n$ is $O(2)$-invariant.  Set
\[
a^*_n=\frac{a_n}{t_n},\;\;b_n^*=\frac{b_n}{t_n},\;\; c_n^*=\frac{c_n}{t_n},
\]
and denote by $d\bGamma_n^*$  the Gaussian  measure on $\eS_2$ with covariance matrix $\bSi_{a_n^*,b_n^*,c_n^*}$. Using (\ref{eq: resc-gauss}) we deduce that
\[
\int_{\eS_2}|\det X|\, d\bGamma_n(X)=t_n  \int_{\eS_2}|\det X|\, d\bGamma^*_n(X).
\]
From (\ref{eq: harm10}) and (\ref{eq: harm30}) we now deduce
\begin{equation}
\eN(\bsU_n)= \frac{2t_n}{s_n} \int_{\eS_2}|\det X|\, d\bGamma^*_n(X).
\label{eq: harm40}
\end{equation}
Observe that  as $n\ra \infty$ we have
\[
\frac{2t_n}{s_n}\sim \frac{n^2}{4},\;\;a_n^*\sim 3,\;\;b_n^*\sim 1 \;\; c_n^*\sim 1,
\]
so that 
\begin{equation}
\eN(\bsU_n)\sim\frac{n^2}{4} \int_{\eS_2}|\det X|\, d\bGamma_{3,1,1}(X),
\label{eq: harm50}
\end{equation}
where $d\bGamma_{3,1,1}(X)$ is the Gaussian  measure on $\eS_2$ with covariance form $\bSi_{3,1,1}$. More precisely (see (\ref{eq: cov-gauss1}))
\[
d\bGamma_{3,1,1}(X)=\frac{1}{4 (2\pi)^{ \frac{3}{2} } }  e^{-\frac{1}{4}\bigl(\,\tr X^2 -\frac{1}{4}(\tr X)^2\,\bigr)}\,\cdot\, \sqrt{2}\prod_{1\leq i\leq j\leq 2} dx_{ij}.
\]
In Appendix \ref{s: b} we show that
\begin{equation}
\int_{\eS_2}|\det X|\, d\bGamma_{3,1,1}(X)=\frac{4}{\sqrt{3}},
\label{eq: harm60}
\end{equation}
and we deduce from (\ref{eq: harm10})  that
\begin{equation}
\eN(\bsU_n)\sim\frac{n^2}{\sqrt{3}}\;\;\mbox{as}\;\; n\ra \infty.
\label{eq: harm70}
\end{equation}

Let us observe that for $n$ very large, a typical spherical harmonic $\bu\in\bsU_n$ is a Morse function on $S^2$ and $0$ is a regular value. The  nodal set $\{\bu=0\}$ is  disjoint union of smoothly embedded  circles.  We denote by $\eD_\bu$ the set of connected components of the complement  of the nodal set are called the  \emph{nodal domains} of $\bu$ and  we denote $\delta(\bu)$ the cardinality of $\eD_\bu$.  A result of  Pleijel and Peetre, \cite{BeMe, Pee, Plei},  shows  that
 \begin{equation}
 \delta(\bu)\leq \frac{4}{j_0^2} n^2\approx 0.692 n^2,
 \label{eq: pleij}
 \end{equation}
 where $j_0$ denotes the first positive  zero of the Bessel function $J_0$. 
 
 We  think of $\delta(\bu)$ as a random variable and we denote by $\delta_n$ its expectation,
 \[
 \delta_n=\frac{1}{(2\pi)^{\dim \bsU_n}{2}}\int_{\bsU_n} \delta(\bu) e^{-\frac{1}{2}|\bu|^2} |d\bu|.
 \]

Denote by $p(\bu)$ the number of local minima and maxima of $\bu$, and by $s(\bu)$ the number of saddle points. Then
\[
\eN(\by)=p(\bu)+s(\bu),\;\;p(\bu)-s(\bu)=\chi(S^2)=2.
\]
This proves that
\[
p(\by)=\frac{1}{2}\bigl(\,\eN(\bu)+2\,\bigr).
\]
For every    nodal region  $D$,  we denote by $p(\bu, D)$ the number of local minima and maxima\footnote{A simple application of the  maximum principle shows that on  each  nodal domain, all the local extrema of $\by$ are of the same type: either all local minima or all local maxima.  Thus $p(\bu, D)$ can be visualized as the number of  \textit{\textbf{p}}eaks of $|\bu|$ on $D$.} of $\bu$ on $D$.   Note that $p(\bu, D)>0$ for any $D$ and thus the number  $p(\bu)=\sum_{D\in \eD_\bu} p(\bu, D)$ can be viewed as a weighted  count of      nodal domains.  Moreover
\[
\delta(\bu)\leq p(\bu).
\]
We set
\[
p(\bsU_n):=\frac{1}{(2\pi)^{\frac{\dim\bsU_n}{2}}}\int_{\bsU_n} e^{-\frac{1}{2}|\bu|^2}p(\bu)\,|d\bu|.
\]
The equality (\ref{eq: harm70})  implies that
\[
p(\bsU_n)\sim \frac{1}{2\sqrt{3}}n^2\;\;\mbox{as}\;\;n\ra \infty,\;\;  \frac{1}{2\sqrt{3}} \approx 0.288.
\]
This  shows that while 
\[
\max_{\bu\in\bsU_n}\delta (\bu)\leq 0.692 n^2,
\]
the  expectation  $\delta_n$ is less than half this theoretical maximum,
\[
\delta_n\approx 0.288
\]
Recently, Nazarov and Sodin \cite{NS},  have proved  that there exists a positive  constant   $a>0$ such that
\[
\delta_n\sim an^2  \;\;\mbox{as}\;\;n\ra \infty.
\]
Additionally, for large $n$,      with high probability,  $\delta(\bu)$ is close to  $an^2$ (see \cite{NS} for a precise statement).   This shows that 
\begin{equation}
a\leq \frac{1}{2\sqrt{3}} \approx 0.288. 
\label{eq: zonal}
\end{equation}
More information about  lower bounds on $a$ can be found in  Maria N\u{a}st\u{a}sescu's senior thesis \cite{Nast}.

\section{The proof of Theorem \ref{th: main}}
\label{s: main1}
\setcounter{equation}{0}

\subsection{Asymptotic estimates of the spectral function} We fix an orthonormal basis  of $L^2(M,g)$ consisting of eigenfunctions  $\Psi_n$ of $\Delta_g$,
\[
\Delta_g\Psi_n=\lambda_n\Psi_n,\;\;n=0,1,\dotsc,\;\;\lambda_0\leq \lambda_1\leq \cdots \leq \lambda_n\leq \cdots.
\]
The collection $(\Psi_n)_{\lambda_n\leq L}$ is therefore an orthonormal  basis of    $\bsU_L$ so that the covariance kernel of the  Gaussian field determined by $\bsU_L$ is
\[
\eE_L(\bp,\bq)=\sum_{\lambda_n\leq L}\Psi_n(\bp)\Psi_n(\bq).
\]
This function is also known as  the \emph{spectral function} associated to the   Laplacian.   Equivalently, $\eE_L$ can be identified with the Schwartz kernel of the orthogonal projection onto $\bsU_L$.   Observe that
\[
\int_M \eE_L(\bp,\bp)\,|dV_g(\bp)|=\dim\bsU_L .
\]
In  the groundbreaking work \cite{Hspec},  L. H\"{o}rmander used  the kernel  of the wave group $e^{\ii t \sqrt{\Delta}}$  to produce refined asymptotic estimates for  the spectral  function.  More precisely he showed (see \cite{Hspec} or \cite[\S 17.5]{H3})
\begin{equation}
\eE_L(p,p) = \frac{\bom_m}{(2\pi)^m}L^{\frac{m}{2}}+O\bigl( L^{\frac{m-1}{2}}\,\bigr)\;\;\mbox{as}\;\;L\ra \infty,
\label{eq: hor}
\end{equation}
uniformly with respect to $\bp\in M$. Above, $\bom_m$ denotes the volume of the unit ball in $\bR^m$. This implies immediately the classical Weyl estimates
\begin{equation}
\dim \bsU_L\sim \frac{\bom_m}{(2\pi)^m}{\rm vol}_g(M)L^{\frac{m}{2}}.
\label{eq: weyl}
\end{equation}
H\"{o}rmander's approach    can be  refined  to produce   asymptotic estimates    for  the behavior of  the derivatives the spectral function  in a  neighborhood of the diagonal. We describe below  these estimates following closely the presentation in \cite{Bin}. For more general results we refer to \cite[Thm. 1.8.5, 1.8.7]{SV}.

We set $\lambda:= L^{\frac{1}{2}}$.  Fix a point $\bp$ and    normal coordinates   $x=(x^1,\dotsc, x^m)$ at $\bp$.    Note that $x(\bp)=0$.   For any  multi-indices $\alpha,\beta\in \bZ^m_{\geq 0}$ we have (see \cite[Thm. 1.1, Prop. 2.3]{Bin})
\begin{equation}
 \frac{\pa^{\alpha+\beta}\eE_L(x,y)}{\pa x^\alpha\pa y^\beta}|_{x=y=0}= C_m(\alpha,\beta)\lambda^{m+|\alpha|+|\beta|} + O\bigl(\, \lambda^{m+|\alpha|+|\beta|-1}\,\bigr),
\label{eq: bin}
\end{equation}
where
\begin{equation}
C_m(\alpha,\beta)=\begin{cases}
0, & \alpha-\beta\not\in(2\bZ)^m,\\
&\\
\frac{(-1)^{\frac{|\alpha|-|\beta|}{2}}}{(2\pi)^m}\int_{\bsB^m} \bx^{\alpha+\beta} |d\bx|, & \alpha-\beta\in (2\bZ)^m,
\end{cases}
\label{eq: bin-const}
\end{equation}
and $\bsB^m$ denotes the unit ball
\[
\bsB^m=\bigl\{ \bx\in \bR^m;\;\;|\bx|=1\,\bigr\}.
\]
The estimates (\ref{eq: bin}) are uniform in $\bp\in M$.    Using (\ref{eq: int-ball}) we deduce (compare with (\ref{eq: inv1}) )
\[
\frac{1}{(2\pi)^m}  \int_{\bsB^m} \bx^{\alpha+\beta} |d\bx|=\frac{1}{(4\pi)^{\frac{m}{2}}\Gamma\bigl(1+\frac{|\alpha|+|\beta|+m}{2}\bigr)}\int_{\bR^m} \bx^{\alpha+\beta}\frac{e^{-|\bx|^2}}{\pi^{\frac{m}{2}}} |d\bx|.
\]
We set
\[
K_{m}=C_m(\alpha,\alpha),\;\;|\alpha|=1,
\]
so that
\begin{equation}
K_{m}= \frac{1}{(4\pi)^{\frac{m}{2}}\Gamma\bigl(2+\frac{m}{2}\bigr)}\int_{\bR^m} x_1^2\frac{e^{-|\bx|^2}}{\pi^{\frac{m}{2}}} |d\bx|=\frac{1}{2(4\pi)^{\frac{m}{2}}\Gamma\bigl(2+\frac{m}{2}\bigr)}.
\label{eq: km}
\end{equation}
For  any $i\leq j$  define $\alpha_{ij}\in \bZ^m$ so that
\[
\bx^{\alpha_{ij}}=x_ix_j.
\]
For $i\leq j$ and $k\leq \ell$ we set
\begin{equation}
C_m(i,j;k,\ell) = C_m(\alpha_{ij}, \alpha_{k\ell})=\frac{1}{(4\pi)^{\frac{m}{2}}\Gamma\bigl(3+\frac{m}{2}\bigr)}\int_{\bR^m} x_ix_jx_kx_\ell\frac{e^{-|\bx|^2}}{\pi^{\frac{m}{2}}} |d\bx|.
\label{eq: ijkl}
\end{equation}
For $i<j$ we have
\begin{equation}
C_m(i,i; j,j)=\frac{1}{(4\pi)^{\frac{m}{2}}\Gamma\bigl(3+\frac{m}{2}\bigr)}\int_{\bR^m} x_i^2x_j^2\frac{e^{-|\bx|^2}}{\pi^{\frac{m}{2}}} |d\bx| = \frac{1}{4(4\pi)^{\frac{m}{2}}\Gamma\bigl(3+\frac{m}{2}\bigr)}=: c_m.
\label{eq: ijij}
\end{equation}
\[
C_m(i,j; i,j) =C_m(i,i;j,j),
\]
Finally
\begin{equation}
C_m(i,i;i,i)=\frac{1}{(4\pi)^{\frac{m}{2}}\Gamma\bigl(3+\frac{m}{2}\bigr)}\int_{\bR^m} x_i^4\frac{e^{-|\bx|^2}}{\pi^{\frac{m}{2}}} |d\bx|= \frac{3}{4(4\pi)^{\frac{m}{2}}\Gamma\bigl(3+\frac{m}{2}\bigr)}=3c_m,
\label{eq: iiii}
\end{equation}
and
\[
C_m(i,j;k,\ell)=0,\;\;\forall k\leq \ell,\;\;(i,j)\neq (k,\ell).
\]

\subsection{Probabilistic consequences of the previous estimates}We denote by $\bsi^L$ the stochastic metric on $M$ determiner by the sample space $\bsU_L$, $L\gg 0$.    As explained in Subsection \ref{ss: prob-int} the covariance form  of the    random vector $\bsU_L\ni \bu \mapsto d\bu(\bp)\in T^*_\bp M$ is  $\bsi_\bp^L$, and  from (\ref{eq: bin}) we deduce
\begin{equation}
\begin{split}
\bsi_\bp^L(\pa_{x^i},\pa_{x^j}) & =\frac{\pa^2 \eE_L(x,y)}{\pa x^i\pa y^j}|_{x=y=0}= K_m\lambda^{m+2}\delta_{ij} +O(\lambda^{m+1})\\
&=K_m\lambda^{m+2} g_\bp(\pa_{x^i},\pa_{x^j})+O(\lambda^{m+1})\;\;\mbox{as}\;\; L\ra \infty,\;\;\mbox{uniformly in $\bp$. }
\end{split}
\label{eq: met-asy}
\end{equation}
In particular, if $\bsS^L_{d\bu(\bp)}$ denotes the   covariance operator of the random  vector $d\bu(\bp)$, then we deduce from the above equality that
\begin{equation}
\bsS^L_{d\bu(\bp)}= K_m\lambda^{m+2}\one_m+O(\lambda^{m+1}),\;\;\mbox{uniformly in $\bp$},
\label{eq: met-asy1}
\end{equation}
and invoking (\ref{eq: jac-cov}) we deduce
\begin{equation}
J^L_g(\bp)=(\det \bsS^L_{d\bu(\bp)})^{\frac{1}{2}}= K_m^{\frac{m}{2}}\lambda^{\frac{m(m+2)}{2}}+ O\bigl(\lambda^{\frac{m(m+2)}{2}-1}\bigr),\;\;\mbox{uniformly in $\bp$. }
\label{eq: jac-asy}
\end{equation}
Denote by $\bSi^L_{H^{\bp}}$ the covariance form of the random      matrix
\[
\bsU_L\ni\bu\mapsto \nabla^2_\bp \bu\in \eS(T_\bp M)=\eS_m.
\]
Using  (\ref{eq: cov-hess}) and (\ref{eq: bin})   we deduce
\begin{equation}
\bSi^L_{H^\bp}= c_m \lambda^{m+4} \Sigma_{3,1,1} +O(\lambda^{m+3}),\;\;\mbox{uniformly in $\bp$, }
\label{eq: hess-asy}
\end{equation}
where the positive definite, symmetric bilinear form  $\bSi_{3,1,1}:\eS_m\dual\times \eS_m\dual\ra \bR$ is described by the equalities (\ref{eq: cov-diag}) and (\ref{eq: cov-off}).   We  denote by $\Gamma_{3,1,1}$ the centered  Gaussian measure on $\eS_m$ with covariance form $\Sigma_{3,1,1}$.

The equality (\ref{eq: omega}) coupled with (\ref{eq: bin}) imply that the covariance operator $\bOm_\bp^L$  satisfies
\begin{equation}
\bOm_\bp^L=O(\lambda^{m+2}),\;\;\mbox{uniformly in $\bp$. }
\label{eq: bom-asy}
\end{equation}
Using (\ref{eq: met-asy1}), (\ref{eq: hess-asy}) and (\ref{eq: bom-asy})   we  deduce that the  covariance operator $\bXi^L_\bp$ defined as in  (\ref{eq: rand-matrix}) satisfies the estimate
\begin{equation}
\bXi_p^L= c_m \lambda^{m+4} \widehat{Q}_{3,1,1}+ O(\lambda^{m+2}),\;\;\mbox{as}\;\;L\ra \infty,\;\;\mbox{uniformly in $\bp$,}
\label{eq: hess-asy1}
\end{equation}
where $\widehat{Q}_{3,1,1}$ is the covariance operator associated to the covariance form $\Sigma_{3,1,1}$ and it is described explicitly in (\ref{eq: cov-op}).  If we denote by $d\Gamma_L$ the Gaussian measure on $\eS_m$ with covariance operator $\bXi^L_\bp$,  we deduce that
\[
d\Gamma_L(Y)=\frac{1}{(2\pi)^{\frac{N_m}{2}}(\det \bXi_\bp^L)^{\frac{1}{2}}} e^{-\frac{(\bXi_\bp^LY,Y)}{2}}\,\cdot\, \underbrace{2^{\frac{1}{2}\binom{m}2}\prod_{i\leq j} dy_{ij}}_{|dY|},
\]
where
\[
N_m=\dim\eS_m=\frac{m(m+1)}{2}.
\]
Let us observe that $|dY|$ is  the Euclidean volume element on $\eS_m$ defined by  the natural inner product on $\eS_m$, $(X,Y)=\tr(XY)$.  We set
\[
c_L:= c_m\lambda^{m+4},\;\;Q^L_\bp=\frac{1}{c_L} \bXi_\bp^L.
\]
Using (\ref{eq: resc-gauss})  we deduce that
\[
 \frac{1}{(2\pi)^{\frac{N_m}{2}}(\det \bXi_\bp^L)^{\frac{1}{2}}}\int_{\eS_m} |\det Y| e^{-\frac{(\bXi_\bp^LY,Y)}{2}} |dY|= \frac{(c_L)^{\frac{m}{2}}}{(2\pi)^{\frac{N_m}{2}}(\det Q_\bp^L)^{\frac{1}{2}}}  \int_{\eS_m} |\det Y| e^{-\frac{(Q_\bp^LY,Y)}{2}} |dY|.
\]
From the estimate (\ref{eq: hess-asy1}) we deduce that
\[
Q_\bp^L\ra \widehat{Q}_{3,1,1}\;\;\mbox{as}\;\;L\ra \infty,\;\;\mbox{uniformly in $\bp$. }
\]
We conclude that
\begin{equation}
\bsE(|\det Y_\bp|)=\int_{\eS_m} |\det Y| d\Gamma_L(Y) \sim c_m^{\frac{m}{2}}\lambda^{\frac{m(m+4)}{2}}\int_{\eS_m} |\det Y| d\Gamma_{3,1,1}(Y).
\label{eq: det-asy}
\end{equation}
The measure $d\Gamma_{3,1,1}$ is   described explicitly in (\ref{eq: cov-gauss1}), more precisely
\[
d\Gamma_{3,1,1}(Y)= \frac{1}{(2\pi)^{\frac{N_m}{2}}\sqrt{\mu_m}}\,\cdot\,  e^{-\frac{1}{4}\bigl(\,\tr Y^2-\frac{1}{m+2}(\tr Y)^2\,\bigr)}|dY|,
\]
where  $\mu_m$ is given by (\ref{eq: mum}).   Using (\ref{eq: KR3}), (\ref{eq: jac-asy}) and (\ref{eq: det-asy}) we deduce that
\[
\begin{split}
\bsE(\eN_L)&\sim\left(\frac{c_m}{K_m}\right)^{\frac{m}{2}}\lambda^{\frac{m(m+4)}{2}-\frac{m(m+2)}{2}}{\rm vol}_g(M)\int_{\eS_m} |\det Y| d\Gamma_{3,1,1}(Y)\\
&\stackrel{(\ref{eq: weyl})}{ \sim }\left(\frac{c_m}{K_m}\right)^{\frac{m}{2}}\frac{(2\pi)^m}{\bom_m}\dim \bsU_L.
\end{split}
\]
Observe that
\[
\frac{c_m}{K_m}=\frac{\Gamma(2+\frac{m}{2})}{2\Gamma(3+\frac{m}{2})}=\frac{1}{m+4},\;\;\bom_m=\frac{\pi^{\frac{m}{2}}}{\Gamma(1+\frac{m}{2})}\;\;\frac{(2\pi)^m}{\bom_m}= (4\pi)^{\frac{m}{2}}\Gamma\Bigl(1+\frac{m}{2}\Bigr).
\]
This completes the proof of (\ref{eq: main1}) and (\ref{eq: main4}).\qed

\subsection{On the asymptotic behavior of the stochastic metric} \label{ss: asym-met} We denote by $g(L)$ the metric
\[
g(L):= \lambda^{-(m+2)}\si^L=  L^{-\frac{(m+2)}{2}}\bsi^L,
\]
where $K_m$ is described by (\ref{eq: km}).The estimate (\ref{eq: met-asy})  shows that  
\[
g(L)\stackrel{C^0}{\Lra} g\;\;\mbox{as $L\ra \infty$},
\]
where $K_m$ is described by (\ref{eq: km}). The metrics $g(L)$ are closely related to the metrics constructed in \cite[Thm. 5]{BBG}. We want to discuss  here  possible ways to improve the topology of the convergence.   

Observe that if  $g(L)$ were to converge  in the $C^2$-topology  to  $K_m$ then the    sectional curvatures  of $g(L)$ would have  to be uniformly bounded.   Conversely, the results of S. Peters \cite{Pet} show that the $C^0$ convergence coupled  with an uniform bound on the sectional curvatures would   yield a $C^{1,\alpha}$ convergence.

The results  in \cite[\S 12.2.1]{AT} describe   a  simple way of expressing the sectional curvatures of $\si^L$ in terms of the spectral function $\eE_L$.  Here are the details.   

Denote by $\nabla^L$ the Levi-Civita connection of the metric  $\si^L$. Fix a point $\bp\in M$ and $g$-normal coordinates    $(x^1,\dotsc, x^m)$ at $\bp$. We  set
\[
\begin{split}
\eE^L_{i_1,\dotsc, i_a; j_1,\dotsc, j_b}:=\frac{\pa^{a+b}\eE_L(x,y)}{\pa x^{i_1}\cdots \pa x^{i_a}\pa y^{j_1}\cdots \pa y^{j_b}}|_{x=y=0},\\
\\
\bsi(L)_{ij}:= \bsi^L_\bp(\pa_{x^i},\pa_{x^j}),\;\;1\leq i,j\leq m,
\end{split}
\]
and  we denote by $(\, \bsi(L)^{ij}\,)_{1\leq i,j\leq m}$ the  inverse matrix of $( \,\bsi(L)_{ij}\,)_{1\leq i,j\leq m}$.  From \cite[Eq. (12.2.6)]{AT} we deduce
\[
\Gamma(L)_{ijk}:=\bsi^L_{\bp}(\nabla^L_{\pa_{x^i}}\pa_{x^j}, \pa_{x^k}) = \eE^L_{ij;k}.
\]
We set
\[
\Gamma(L)^k_{ij}:=\sum_\ell \bsi(L)^{k\ell}\Gamma(L)_{ij\ell}=\sum_\ell \bsi(L)^{k\ell}\eE^L_{ij;\ell},
\]
so that
\[
\bigl(\, \nabla^L_{\pa_{x^i}}\pa_{x^j}\,\bigr)_\bp=\sum_k \Gamma(L)^k_{ij}\pa_{x^k}.
\]
For  $\bu\in \bsU_L$  we set
\begin{equation}
H^L_{ij}(\bu):=  \bigl(\,\pa_{\pa_x^i}\pa_{x^j}\bu -(\nabla^L_{\pa_{x^i}}\pa_{x^j})\bu\,\bigr)_\bp=\pa_{\pa_x^i}\pa_{x^j}\bu(\bp)-\sum_k \Gamma(L)^k_{ij} \pa_{x_k}\bu(\bp).
\label{eq: hess-sto}
\end{equation}
We think of the matrix $H^L_{ij}(\bu)$  as an element  $H^L(\bu)\in T^*_\bp M\otimes T^*_\bp M$,
\[
H^L(\bu)=\sum_{i,j} H^L_{ij} dx^i\otimes dx^j
\]
 and we set
 \[
 H^L(\bu)\wedge  H^L(\bu):=\sum_{i,j,k\ell} H^L_{ij}(\bu)H^L_{k\ell}(\bu) dx^i\wedge dx^k \otimes dx^j\wedge dx^\ell
 \]
 \[
 =:\sum_{i<k, j<\ell} Q^L_{ikj\ell}(\bu)  dx^i\wedge dx^k \otimes dx^j\wedge dx^\ell.
\]
Note that
\[
Q^L_{ikj\ell}(\bu)=2(\, H_{ij}^L(\bu) H_{k\ell}^L(\bu)- H_{kj}^L(\bu)H_{i\ell}^L(\bu)\,\bigr).
\]
We denote by $R^L$ the Riemann tensor of $\bsi^L$ and we set
\[
R^L_{ijk\ell}:=\bsi^L\bigl(\, R^L(\pa_{x^i},\pa_{x^j})\pa_{x^k},\pa_{x^\ell}\,\bigr)_\bp.
\]
The map $\bsU_L\ni \bu\mapsto Q^L_{ikj\ell}(\bu)\in\bR$ is  a  random variable and according to \cite[Lemma 12.2.1]{AT} we have\footnote{Alternatively, in our case,  the     equalities (\ref{eq: curv-hess}) are   simple consequences of Theorema Egregium, \cite[\S 4.2.4, Eq. (4.2.12)]{N0}.}
\begin{equation}
2R^L_{ikj\ell}=-\bsE\bigl(\, Q^L_{ikj\ell}\,\bigr).
\label{eq: curv-hess}
\end{equation}
In particular we deduce that
\[
-R^L_{ijij} = \bsE\bigl(\, H_{ii}^LH^L_{jj} - (H_{ij}^L)^2\,\bigr).
\]
From (\ref{eq: met-asy}) we deduce that
\[
\bsi(L)_{ij}=\eE^L_{i;j}\sim K_m\lambda^{m+2}\delta_{ij}+O(\lambda^{m+1})\;\;\mbox{as $L\ra\infty$}.
\]
Hence
\[
\bsi(L)^{ij}\sim \frac{1}{K_m\lambda^{m+2}}\Bigl(\delta^{ij}+O(\lambda^{-1})\,\Bigr).
\]
From  (\ref{eq: bin}) we deduce that as $\lambda\ra \infty$ we have
\begin{subequations}
\begin{equation}
\Gamma(L)^k_{ij}\sim  \sum_\ell \frac{1}{K_m\lambda^{m+2}}\Bigl(\delta^{k\ell}+O(\lambda^{-1})\,\Bigr)\eE^L_{ij;\ell}\sim \frac{1}{K_m\lambda^{m+2}}\eE^L_{ij;k}+O(\lambda^{-1})=  O(1),
\label{eq: Gamaa}
\end{equation}
\begin{equation}
\bsE(\pa^2_{x^ix^j}\bu(\bp),\pa_{x^k}\bu(\bp)\,)=\eE^L_{ij;k}= O(\lambda^{m+2})
\label{eq: Gamab}
\end{equation}
\end{subequations}
Using the  estimates  (\ref{eq: cov-hess}), (\ref{eq: hess-sto}), (\ref{eq: Gamaa}) and (\ref{eq: Gamab}) in (\ref{eq: curv-hess})   we  deduce
 \[
\bsE\bigl(\, H_{ii}^LH^L_{jj}- (H^L_{ij})^2\,\bigr)  = \bigl(\eE^L_{ii;jj}-\eE^L_{ij;ij}\,\bigr) + O(\lambda^{m+2}).
\]
We deduce that the sectional curvature of $\bsi^L$ along the plane spanned  by $\pa_{x^i},\pa_{x^k}$ is
\[
K^L_{ij}=-\frac{R_{ijij}}{\bsi(L)_{ii}\bsi(L)_{jj}-\bsi(L)_{ij}^2}=\frac{1}{K_m^2\lambda^{2m+4}}\left(\eE^L_{ii;jj}-\eE^L_{ij;ij}\right) +O\left(\frac{\eE^L_{ii;jj}-\eE^L_{ij;ij}}{\lambda^{2m+5}}\right).
\]
On the other hand
\[
\bsE(\pa^2_{x^ix^j}\bu(\bp),\pa^2_{x^kx^\ell}\bu(\bp)\,)=\eE^L_{ij;k\ell}\sim C_m(i,j;k,\ell)\lambda^{m+4}+O(\lambda^{m+3}),\;\;i\leq j,\;\;k\leq \ell,
\]
where $C_m(i,j;k,\ell)$ is defined by (\ref{eq: ijkl}), and we deduce
\begin{equation}
\eE^L_{ii;jj}-\eE^L_{ij;ij}= \bigl(\,C_m(i,i;j,j)-C_m(i,j;i,j)\,\bigr)\lambda^{m+4}+O(\lambda^{m+3})=O(\lambda^{m+3}).
\label{eq: eijij}
\end{equation}
Hence
\[
K^L_{ij}=\frac{1}{K_m^2\lambda^{2m+4}}\left(\eE^L_{ii;jj}-\eE^L_{ij;ij}\right)+ O(\lambda^{-m-2}).
\]
The sectional curvature of  $g(L)=\lambda^{-m-2}\bsi^L$ along the plane spanned by $\pa_{x^i},\pa_{x^j}$ is
\[
\overline{K}^L_{ij}=\lambda^{m+2}K^L_{ij}= \frac{1}{K_m^2\lambda^{m+2}}\left(\eE^L_{ii;jj}-\eE^L_{ij;ij}\right)+ O(1).
\]
We deduce that the sectional curvatures of $g(L)$ are uniformly bounded if and only if
\begin{equation}
\eE^L_{ii;jj}-\eE^L_{ij;ij}= O(\lambda^{m+2})\;\;\mbox{uniformly over $M$}.
\label{eq: bcurv}
\end{equation}
Note that the estimates (\ref{eq: bcurv}) are stronger than the  estimates (\ref{eq: eijij})   which  are direct consequences of the Bin-H\"{o}rmander estimates (\ref{eq: bin}).

Let us point our that (\ref{eq: bcurv}) hold when $(M,g)$ is a homogeneous space equipped with an invariant metric. Indeed, in this case the metric $g(L)$ has the same symmetries as $g$ and thus there exists  a constant $c_L>0$ such that $g(L)=c_L g$. Then $c_L\ra K_m$   as $L\ra \infty$ so that $g(L)\ra K_mg$ in the $C^\infty$-topology and therefore  $\overline{K}^L_{ij}= O(1)$.

\section{The proof of Theorem \ref{th: main2}}
\label{s: asy}
\setcounter{equation}{0}

\subsection{Reduction to  the classical   Gaussian orthogonal ensemble.}We  begin  by describing  the large $m$ behavior of the integral
\[
I_m:=\frac{1}{(2\pi)^{\frac{m(m+1)}{4}}\sqrt{\mu_m}}\int_{\eS_m}|\det X| e^{-\frac{1}{4}\bigl(\,\tr X^2-\frac{1}{m+2}(\tr X)^2\,\bigr)}|dX|,
\]
where we recall that
\[
\mu_m=2^{\binom{m}{2}+m-1}(m+2).
\]
We will use  a trick of Fyodorov \cite{Fy}; see also \cite[\S 1.5]{For}. Recall first the classical  equality
\[
\int_\bR  e^{-(at^2+bt+c)} |dt|= \left(\frac{\pi}{a}\right)^{\frac{1}{2}}e^{\frac{\Delta}{4a}},\;\;\Delta=b^2-4ac,\;\;a>0.
\]
For any real numbers  $u,v,w$,  we have
\[
\begin{split}
ut^2+v\tr(X+wt\one_m)^2 &=  (u+mw^2)t^2+ 2vw(\tr X)t+ v \tr X^2\\
&=:a(u,v,w)t^2+b(u,v,w)t+c(u,v,w).
\end{split}
\]
We seek $u,v,w$ such that
\[
 \frac{v^2w^2}{u+mw^2}(\tr X)^2-\frac{v}{u+mw^2}\tr X^2=\frac{b^2-4ac}{4a}=-\frac{1}{4}\left(\,\tr X^2-\frac{1}{m+2}(\tr X)^2\right).
\]
We have
\[
\frac{v}{u+mw^2}=\frac{1}{4},\;\; \frac{v^2w^2}{u+mw^2}=\frac{1}{4(m+2)},
\]
and we deduce
\[
vw^2=\frac{1}{(m+2)},\;\;v= \frac{1}{4}(u+mw^2) \Llra u=4v-mw^2.
\]
Hence
\[
w^2= \frac{1}{v(m+2)},\;\; u= 4v-\frac{m}{v(m+2)}.
\]
We choose $v=\frac{1}{2}$ so that
\[
w^2=\frac{2}{(m+2)},\;\;u=2-\frac{2m}{(m+2)}=\frac{4}{m+2}, \;\;a(u,v,w)=4v=2,
\]
\[
e^{-\frac{1}{4}\left(\,\tr X^2-\frac{1}{m+2}(\tr X)^2\right)} =\left(\frac{2}{\pi}\right)^{\frac{1}{2}}\int_{\bR} e^{-\frac{4t^2}{m+2}} e^{-\frac{1}{2}\tr(X+t \sqrt{ \frac{2}{m+2} } \one_m)^2}dt
\]
($s=\sqrt{\frac{2}{m+2}}t$)
\[
=\left(\frac{2(m+2)}{\pi}\right)^{\frac{1}{2}}\int_{\bR} e^{-\frac{1}{2}\tr(X+\frac{s}{\sqrt{2}}\one_m)^2} e^{-s^2}ds=\left(\frac{m+2}{2}\right)^{\frac{1}{2}}\int_{\bR} e^{-\frac{1}{2}\tr(X-s\one_m)^2}\,\cdot\, \underbrace{\frac{e^{-2s^2}}{\sqrt{\frac{\pi}{2}}}ds}_{d\bgamma(s)}.
\]
Hence
\begin{equation}
\begin{split}
I_m &=  \underbrace{\frac{(m+2)^{\frac{1}{2}}}{2^{\frac{1}{2}}(2\pi)^{\frac{m(m+1)}{4}}\sqrt{\mu_m}}}_{=:A_m}\,\int_{\bR}\left(\int_{\eS_m} |\det X| e^{-\frac{1}{2}\tr(X-s\one_m)^2}|dX|\right) d\bgamma(s)\\
&=A_m\int_{\bR}\underbrace{\left(\int_{\eS_m} |\det (x\one_m -Y)| e^{-\frac{1}{2}\tr Y^2}|dY|\right) }_{=:f_m(x)}d\bgamma(x).
\end{split}
\label{eq: fm}
\end{equation}
For any $O(n)$-invariant  function $: \eS_n\ra \bR$ we have a  Weyl integration formula (see \cite{AGZ, For, Me}),
\[
\frac{1}{(2\pi)^{\frac{\dim\eS_m}{2}}}\int_{\eS_n} f(X) |dX| =     \frac{1}{\bsZ_n}\int_{\bR^n} f(\lambda) |\Delta_m(\lambda)|\,|d\lambda|,
\]
where
\[
\Delta_n(\lambda):=\prod_{1\leq i< j\leq n} (\lambda_j-\lambda_i),
\]
and the constant  $\bsZ_n$ is defined by the equality \cite[Eq. (2.5.11)]{AGZ} ,
\begin{equation}
\bsZ_m:=\int_{\bR^n} e^{-\frac{1}{2}|\lambda|^2} |\Delta_m(\lambda)|\,|d\lambda|=2^{\frac{n}{2}} n!\prod_{j=1}^n\Gamma\Bigl(\frac{j}{2}\Bigr).
\label{eq: zn}
\end{equation}
Now observe that for any $\lambda_0\in\bR$ we have (with  $f_m$ defined  in (\ref{eq: fm}))
\[
f_m(\lambda_0)=\frac{(2\pi)^{\frac{\dim\eS_m}{2}}}{\bsZ_m} \int_{\bR^m} e^{-\frac{|\lambda|^2}{2}}\left(\prod_{j=1}^n|\lambda_j-\lambda_0|\right) \bigl|\,\Delta_m(\lambda)\,\bigr|\,|d\lambda|
\]
\[
=\frac{e^{\frac{1}{2}\lambda_0^2}(2\pi)^{\frac{\dim\eS_m}{2}}}{\bsZ_m}\int_{\bR^m} e^{-\frac{1}{2}\sum_{i=1}^m \lambda_i^2} \bigl|\Delta_{m+1}(\lambda_0,\lambda_1,\dotsc,\lambda_m)\,\bigr|\,|d\lambda_1\cdots d\lambda_m|\,
\]
\[
=\frac{e^{\frac{1}{2}\lambda_0^2}(2\pi)^{\frac{\dim\eS_m}{2}}\bsZ_{m+1}}{\bsZ_m}\, \underbrace{\frac{1}{\bsZ_{m+1}}\int_{\bR^m} e^{-\frac{1}{2}\sum_{i=1}^m \lambda_i^2}\bigl|\, \Delta_{m+1}(\lambda_0,\lambda_1,\dotsc,\lambda_m)\,\bigr|\,|d\lambda_1\cdots d\lambda_m|}_{=:\rho_{m+1}(\lambda_0)}.
\]
The function $R_n(x)=n\rho_{n}(x)$ is known in random  matrix theory as the \emph{$1$-point correlation  function} of the Gaussian  orthogonal ensemble of symmetric $n\times n$ matrices, \cite[\S 4.4.1]{DG}, \cite[\S 3]{Fy2}, \cite[\S 4.2]{Me}. We conclude that
\[
I_m =\frac{(2\pi)^{\frac{\dim\eS_m}{2}}A_m\bsZ_{m+1}}{\bsZ_m}\int_{\bR} \rho_{m+1}(x)e^{\frac{x^2}{2}}d\bgamma(x)=\frac{A_m\bsZ_{m+1}}{\bsZ_m}\int_{\bR} \rho_{m+1}(x)\sqrt{\frac{2}{\pi}}e^{-\frac{3x^2}{2}} dx.
\]
We have
\[
\frac{\bsZ_{m+1}}{\bsZ_m}=2^{\frac{1}{2}}(m+1)\Gamma\left(\frac{m+1}{2}\right),
\]
\[
\sqrt{\frac{2}{\pi}}\frac{(2\pi)^{\frac{\dim\eS_m}{2}}A_m\bsZ_{m+1}}{\bsZ_m}= (2\pi)^{\frac{\dim\eS_m}{2}}\sqrt{\frac{2}{\pi}} 2^{\frac{1}{2}}(m+1)\Gamma\left(\frac{m+1}{2}\right) \frac{(m+2)^{\frac{1}{2}}}{2^{\frac{1}{2}}(2\pi)^{\frac{m(m+1)}{4}}\sqrt{\mu_m}}
\]
\[
=\sqrt{\frac{2}{\pi}} (m+1)\Gamma\left(\frac{m+1}{2}\right) \frac{(m+2)^{\frac{1}{2}}}{\sqrt{\mu_m}}
\]
\[
=\sqrt{\frac{2}{\pi}} (m+1)\Gamma\left(\frac{m+1}{2}\right) \frac{(m+2)^{\frac{1}{2}}}{2^{\frac{1}{2}\binom{m-1}{2}+\frac{m-1}{2}}(m+2)^{\frac{1}{2}}}
\]
\[
=\sqrt{\frac{2}{\pi}} (m+1) \frac{\Gamma\left(\frac{m+1}{2}\right)}{2^{\frac{1}{2}\binom{m-1}{2}+\frac{m-1}{2}}}= \sqrt{\frac{2}{\pi}} \frac{2\Gamma\left(\frac{m+3}{2}\right)}{2^{\frac{1}{2}\binom{m-1}{2}+\frac{m-1}{2}}}.
\]
We  deduce
\begin{equation}
I_m =\sqrt{\frac{2}{\pi}} \frac{2\Gamma\left(\frac{m+3}{2}\right)}{2^{\frac{1}{2}\binom{m-1}{2}+\frac{m-1}{2}}}\int_{\bR} \rho_{m+1}(x)   e^{-\frac{3x^2}{2}} dx.
\label{eq: im}
\end{equation}
We set
\[
\bar{\rho}_n(s):=\sqrt{n}\rho_n(\sqrt{n}s),
\]
and we deduce
\begin{equation}
\begin{split}
\int_{\bR}\rho_n(x) e^{-\frac{3x^2}{2}} dx &= \int_{\bR} \rho_n(\sqrt{n} s) e^{-\frac{3ns^2}{2}} ds=n \int_{\bR} e^{-\frac{3ns^2}{2}}\bar{\rho}_n(s) ds\\
&=\left(\frac{2\pi}{3n}\right)^{\frac{1}{2}} \int_{\bR}\underbrace{  \frac{(3n)^{\frac{1}{2}}e^{-\frac{3ns^2}{2}}}{(2\pi)^{\frac{1}{2}}}}_{=:w_n(s)}\, \cdot\, \bar{\rho}_n(s) ds.
\end{split}
\label{eq: rhon}
\end{equation}
To proceed further  we use as guide  Wigner's theorem, \cite{AGZ, DG, For, Me}  stating that the    sequences of probability measures 
\[
\bar{\rho}_n(x)dx=\sqrt{n}\rho_n(\sqrt{n} x) dx=\frac{1}{\sqrt{n}} R_n(\sqrt{n}x)dx
\]
 converges  weakly to the semi-circle  probability measure\footnote{There are different rescalings of the  semicircle measures in the literature. Our conventions agree with those in \cite{Me}.} $\rho(x)dx$,
\begin{equation}
\rho (x)=\frac{1}{\pi}\begin{cases}
\sqrt{2-x^2}, & |x|\leq \sqrt{2}\\
0, & |x|>\sqrt{2}.
\end{cases}
\label{eq:  semi}
\end{equation}
We  observe that the Gaussian measures  $w_n(s) ds$ converge to the   Dirac delta measure concentrated at the origin.  This suggests that
\begin{equation}
\lim_{n\ra\infty}\int_{\bR} \bar{\rho}_n(s) w_n(s) ds =\rho(0)=\frac{\sqrt{2}}{\pi}.
\label{eq: wigner}
\end{equation}
We will show that this is indeed the case by slightly  refining the arguments  in  one particular proof of Wigner's theorem; see  \cite[\S 7.1.6]{For},\cite[\S 6.1]{Fy2} or \cite[A.9]{Me}. For the moment  we  will take (\ref{eq: wigner}) for granted and  show that it immediately   implies  (\ref{eq: main3}).

Using (\ref{eq: wigner})  in (\ref{eq: im})  and (\ref{eq: rhon}) we deduce that
\[
I_m\sim  \sqrt{\frac{2}{\pi}} \frac{2\Gamma\left(\frac{m+3}{2}\right)}{2^{\frac{1}{2}\binom{m-1}{2}+\frac{m-1}{2}}}\times \left(\frac{2\pi}{3(m+1)}\right)^{\frac{1}{2}} \times \frac{\sqrt{2}}{\pi}\;\;\mbox{as}\;\;m\ra \infty.
\]
We now invoke Stirling's formula to conclude that
\begin{equation}
\log I_m \sim  \sim \log \Gamma\left(\frac{m+3}{2}\right)\sim\frac{m}{2}\log m,\;\;\mbox{as}\;\;m\ra \infty.
\label{eq: im2}
\end{equation}
Form (\ref{eq: main2}) we  deduce that
\[
\log C(m)=\log I_m +\frac{m}{2}\log 4\pi +\log \Gamma\Bigl( 1+\frac{m}{2}\Bigr) -\frac{m}{2}\log (m+4).
\]
Stirling's formula and (\ref{eq: im2}) imply  that
\[
\log C(m)\sim \log I_m  \sim\frac{m}{2}\log m\;\;\mbox{as}\;\;m \ra \infty.
\]
This proves (\ref{eq: main3}).\qed

 \subsection{Wigner's semicircle law revisited}   We can now present the postponed proof of (\ref{eq: wigner}). The $1$-point correlation  function  $R_n(x)$ can be expressed explicitly in terms of Hermite polynomials, \cite[Eq. (7.2.32) and \S A.9]{Me},
\begin{equation}
R_n(x)=\underbrace{\sum_{k=0}^{n-1}\psi_k(x)^2}_{=:\kk_n(x)}+\underbrace{\left(\frac{n}{2}\right)^{\frac{1}{2}}\psi_{n-1}(x)\int_{\bR}\ve(x-t)\psi_n(t) dt+ \alpha_n(x)}_{=:\bell_n(x)},
\label{eq: rhon1}
\end{equation}
where
\[
\psi_n(x)=\frac{1}{(2^n n!\sqrt{\pi})^{\frac{1}{2}}} e^{-\frac{x^2}{2}}H_n(x),\;\; H_n(x)= (-1)^ne^{x^2}\frac{d^n}{dx^n} (e^{-x^2}),
\]
\[
\alpha_n(x)=\begin{cases}
0, & n\in 2\bZ,\\
\frac{\psi_{n-1}(x)}{\int_{\bR}\psi_{n-1}(x) dx}, & n\in 2\bZ+1,
\end{cases}
\]
and
\[
\ve(x)=\begin{cases}
\frac{1}{2}, & x>0\\
0, & x=0,\\
-\frac{1}{2}, & x<0.
\end{cases}
\]
From the Christoffel-Darboux formula \cite[Eq. (5.5.9)]{Sze} we deduce
\[
\pi^{\frac{1}{2}} e^{x^2}\sum_{k=0}^{n-1}\psi_k(x)^2=\sum_{k=1}^{n-1}\frac{1}{2^k k!}H_k(x)^2=\frac{1}{2^n(n-1)!} \bigl(\,H_n'(x)H_{n-1}(x)-H_n(x) H_n'(x)\,\bigr)
\]
Using the  recurrence formula $H_n'=2xH_n-H_{n+1}$ we deduce
\[
H_n'(x)H_{n-1}(x)-H_n(x) H_n'(x)=H_n^2(x)- H_{n-1}(x)H_{n+1}(x)
\]
and
\[
\kk_n(x)=\frac{e^{-x^2}}{2^{n} (n-1)!\pi^{\frac{1}{2}}}\Bigl( H_n^2(x)- H_{n-1}(x)H_{n+1}(x)\,\Bigr).
\]
We set
\[
\bar{\kk}_n(x):=\frac{\kk_n\bigl(\sqrt{n} x\bigr)}{\sqrt{n}},\;\;\bar{\bell}_n(x):=\frac{\bell_n\bigl(\sqrt{n} x\bigr)}{\sqrt{n}},\;\;\bar{R}_n(x)=\frac{1}{\sqrt{n}}R_n(\sqrt{n} x)=\bar{\rho}_n(x)
\]
so that
\[
\bar{R}_n(x)=\bar{\kk}_n(x)+\bar{\bell}_n(x).
\]
\begin{lemma}
\begin{equation}
\lim_{n\ra \infty}\sup_{x\in \bR}|\bar{\bell}_n(x)|=0.
\label{eq: lninfty}
\end{equation}
\label{lemma: ln}
\end{lemma}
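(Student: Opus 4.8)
The plan is to split $\bar{\bell}_n$ into its two constituent pieces and bound each uniformly. Writing $y=\sqrt{n}\,x$ (which ranges over all of $\bR$ as $x$ does) and using $\frac{1}{\sqrt n}\left(\frac n2\right)^{1/2}=\frac1{\sqrt2}$, I have $\bar{\bell}_n(x)=\frac1{\sqrt2}\,\psi_{n-1}(y)\,G_n(y)+\frac1{\sqrt n}\alpha_n(y)$, where $G_n(y):=\int_{\bR}\ve(y-t)\psi_n(t)\,dt$. Integrating the definition of $\ve$ gives the convenient form $G_n(y)=\int_{-\infty}^y\psi_n(t)\,dt-\frac12\int_{\bR}\psi_n(t)\,dt$, so that $G_n'=\psi_n$ and $\sup_y|G_n(y)|\le \tfrac32\sup_y\bigl|\int_{-\infty}^y\psi_n\bigr|$. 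Throughout I would use two classical facts about the normalized Hermite functions: the global sup-bound $\sup_x|\psi_n(x)|=O(n^{-1/12})$, and (via the generating function $\sum_n H_n(t) s^n/n!=e^{2ts-s^2}$) the fact that $\int_{\bR}\psi_n=0$ for odd $n$, while $\int_{\bR}\psi_{2k}\asymp k^{-1/4}>0$.

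The $\alpha_n$-piece is immediate. For even $n$ it vanishes identically; for odd $n$, $\alpha_n(y)=\psi_{n-1}(y)/\int_\bR\psi_{n-1}$ with $n-1$ even, so $\tfrac1{\sqrt n}|\alpha_n(y)|\le \tfrac1{\sqrt n}\,\|\psi_{n-1}\|_\infty/\bigl|\int_\bR\psi_{n-1}\bigr|=O\bigl(n^{-1/2}\cdot n^{-1/12}\cdot n^{1/4}\bigr)=O(n^{-1/3})$, uniformly in $y$.

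The crux is the first piece, for which I must show $\sup_y\bigl|\psi_{n-1}(y)G_n(y)\bigr|\to0$. Since $\|\psi_{n-1}\|_\infty=O(n^{-1/12})$, it suffices to prove $\sup_y|G_n(y)|=o(n^{1/12})$; in fact I would aim for $\sup_y|G_n(y)|=O(n^{-1/4})$. The naive estimate $|G_n|\le\tfrac12\|\psi_n\|_1=O(n^{1/4})$ is useless here, since it exceeds the threshold $n^{1/12}$, so the whole point is to exploit the oscillation of $\psi_n$. To do this I would feed the Plancherel--Rotach asymptotics \cite{Sze} into the partial integral $\int_{-\infty}^y\psi_n$. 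On the bulk $|t|\le(1-\delta)\sqrt{2n}$ one has $\psi_n(t)\approx\sqrt{\tfrac2\pi}(2n+1-t^2)^{-1/4}\cos\Theta_n(t)$ with a phase satisfying $\Theta_n'(t)\asymp\sqrt{2n+1-t^2}\asymp\sqrt n$; a single integration by parts against this rapidly varying phase gains a factor $n^{-1/2}$ and shows the bulk contribution to $\int_{-\infty}^y\psi_n$ is $O(n^{-1/4})$, uniformly in $y$. The two edge (Airy) windows $|t|\approx\sqrt{2n}$ have width $O(n^{-1/6})$ and height $O(n^{-1/12})$, hence contribute $O(n^{-1/4})$, and the region beyond the turning points is exponentially small. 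Collecting these, $\sup_y|G_n(y)|=O(n^{-1/4})$, whence $\sup_x\bigl|\tfrac1{\sqrt2}\psi_{n-1}(y)G_n(y)\bigr|=O(n^{-1/12})\cdot O(n^{-1/4})=O(n^{-1/3})$, uniformly.

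Combining the three estimates yields $\sup_{x\in\bR}|\bar{\bell}_n(x)|=O(n^{-1/3})\to0$, which is (\ref{eq: lninfty}). The main obstacle is precisely the oscillatory bound on $G_n$: the Hermite antiderivative has no closed form in terms of finitely many $\psi_k$ (the two-term recurrence $\int_{-\infty}^x\psi_n=\sqrt{\tfrac{n-1}n}\int_{-\infty}^x\psi_{n-2}-\sqrt{\tfrac2n}\,\psi_{n-1}$ only telescopes with loss, and the triangle inequality through it recovers nothing better than the useless $n^{1/4}$), so the decisive gain must come from the cancellation captured by the uniform Plancherel--Rotach expansion across bulk, edge, and tail, with the transition regions controlled by the Airy-type bounds.
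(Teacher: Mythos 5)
Your proposal is correct, and its skeleton is the same as the paper's: split $\bar{\bell}_n$ into the $\psi_{n-1}\cdot F_n$ piece (your $G_n$ is exactly the paper's $F_n=\int_\bR\ve(x-t)\psi_n(t)\,dt$) and the $\alpha_n$ piece, then bound each uniformly using $\sup_x|\psi_n(x)|=O(n^{-1/12})$ and the generating-function evaluation of $\int_\bR\psi_{2k}$. Where you genuinely diverge is the key bound on $F_n$: the paper simply cites Deift--Gioev (Thm.\ 6.55 together with their Eq.\ (6.26)) to get $\sup_x|F_n(x)|=O(n^{-1/12})$, while you prove a self-contained and in fact sharper bound $\sup_y|G_n(y)|=O(n^{-1/4})$ by integrating the Plancherel--Rotach asymptotics: integration by parts against the phase in the bulk, the $n^{-1/6}\times n^{-1/12}$ estimate on the Airy window, and exponential decay beyond the turning points. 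That exponent is sharp, since $G_n(+\infty)=\tfrac12\int_\bR\psi_n\asymp n^{-1/4}$ for even $n$, which is a good consistency check on your analysis; the only place your sketch would need care in a full write-up is the intermediate region between the fixed-fraction bulk and the Airy window, where one must invoke the \emph{uniform} asymptotics, as you acknowledge.

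A point in your favor worth recording: your asymptotic $\int_\bR\psi_{2k}\asymp k^{-1/4}$ is the correct one, whereas the paper's displayed claim $\int_\bR\psi_{2n}\,dx\sim \mathrm{const}\cdot n^{1/4}$ is a slip (Stirling applied to $\sqrt{(2n)!}/(2^n n!)$ gives decay $n^{-1/4}$, and the exact value $\int_\bR\psi_2=\pi^{1/4}$ already disagrees with growth). Consequently the paper's intermediate assertion $\sup_x|\alpha_n(x)|=O(n^{-1/3})$ should really be $O(n^{1/6})$, and the lemma is rescued only by the rescaling factor $n^{-1/2}$ in $\bar{\bell}_n(x)=n^{-1/2}\bell_n(\sqrt{n}\,x)$, exactly as in your computation $n^{-1/2}\cdot n^{-1/12}\cdot n^{1/4}=n^{-1/3}$ (coincidentally the same final exponent the paper states). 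Your treatment of the $\alpha_n$ term, including the parity observation that it vanishes for even $n$, is thus not just correct but repairs a gap in the paper's own bookkeeping.
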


\begin{proof} Using the generating series  \cite[Eq. (5.5.7)]{Sze}
\[
\sum_{n=0}^\infty H_n(x)\frac{T^n}{n!}=e^{2Tx-T^2}
\]
we deduce that
\[
\sum_{n=0}^\infty\left(\int_{\bR} e^{-\frac{x^2}{2}}H_n(x) dx\right) \frac{T^n}{n!}= e^{T^2} \int_\bR e^{-\frac{(x-2T)^2}{2}} dx=\sqrt{2\pi} e^{T^2},
\]
so that
\[
\frac{1}{(2n)!}\int_\bR e^{-\frac{x^2}{2}} H_{2n}(x) dx=\frac{\sqrt{2\pi}}{n!}\;\;\mbox{and}\;\;\int_\bR\psi_{2n}(x) dx=\frac{\sqrt{2 (2n)!}}{ 2^n n!\pi^{\frac{1}{4}}}\sim  {\rm const}\cdot n^{\frac{1}{4}}\;\;\mbox{as $n\ra \infty$}.
\]
Using \cite[Thm. 6.55]{DG} or  \cite[Thm. 8.91.3]{Sze}  we deduce  that
\[
\sup_{x\in \bR}|\psi_n(x)|= O(n^{-\frac{1}{12}})
\]
and thus 
\[
\sup_{x\in\bR}|\alpha_{n}(x)|= O(n^{-\frac{1}{12}-\frac{1}{4}})=O(n^{-\frac{1}{3}})\;\;\mbox{as $n\ra \infty$}.
\]
We set
\[
F_n(x)=\int_\bR \ve(x-t)\psi_n(t) dt.
\]
Using \cite[Thm. 6.55 + Eq. (6.26)]{DG} we deduce $\sup_{x\in \bR}|F_n(x)|= O\bigl(\,n^{-\frac{1}{12}}\,\bigr)$. This proves (\ref{eq: lninfty}).\end{proof}
From the above lemma we deduce that
\[
\int_{\bR}\bigl(\,\bar{\rho}_n(s)-\rho(s)\,\bigr)w_n(s) ds=\int_{\bR}\bigl(\,\bar{\kk}_n(s)-\rho(s)\,\bigr)w_n(s) ds +O\bigl(\,n^{-\frac{1}{12}}\,\bigr)\;\;\mbox{as $n\ra \infty$}.
\]
\begin{lemma}
\[
\lim_{n\ra \infty}\int_{\bR}\bigl(\,\bar{\kk}_n(s)-\rho(s)\,\bigr)w_n(s) ds=0.
\]
\end{lemma}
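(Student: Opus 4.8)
The plan is to localize the whole integral near the origin, where the Plancherel--Rotach asymptotics are available, and to discard the complement using the concentration of the Gaussian weight $w_n$. Fix once and for all a number $\delta$ with $0<\delta<\sqrt 2$ and split $\int_\bR=\int_{|s|\le\delta}+\int_{|s|>\delta}$. For the outer region I would first record two elementary facts. The function $\bar{\kk}_n(s)=\frac{1}{\sqrt n}\sum_{k=0}^{n-1}\psi_k(\sqrt n s)^2$ is nonnegative, and after the substitution $u=\sqrt n s$ together with $\int_\bR\psi_k^2=1$ it satisfies $\int_\bR\bar{\kk}_n(s)\,ds=\frac{1}{n}\sum_{k=0}^{n-1}\int_\bR\psi_k^2=1$; since also $\int_\bR\rho=1$, both are probability densities and $\int_\bR|\bar{\kk}_n-\rho|\,ds\le 2$. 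As $w_n$ is even and unimodal, $\sup_{|s|>\delta}w_n(s)=w_n(\delta)=\bigl(\tfrac{3n}{2\pi}\bigr)^{1/2}e^{-3n\delta^2/2}$, whence $\int_{|s|>\delta}|\bar{\kk}_n-\rho|\,w_n\,ds\le 2\,w_n(\delta)\ra 0$.

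The substance is the inner region. Here I would feed the Plancherel--Rotach asymptotics for the Hermite functions into the Christoffel--Darboux expression $\kk_n=\frac{e^{-x^2}}{2^n(n-1)!\sqrt\pi}\bigl(H_n^2-H_{n-1}H_{n+1}\bigr)$ derived above. The constraint $|s|\le\delta$, via $x=\sqrt n s=(2k+1)^{1/2}\cos\varphi$, confines the relevant phase angles to a compact subarc $\varphi\in[\eps,\pi-\eps]$ bounded away from the turning points, so the asymptotics are uniform with $O(n^{-1})$ error. The Tur\'an-type combination $H_n^2-H_{n-1}H_{n+1}$ is precisely the one in which the leading oscillatory phases cancel, leaving the uniform bulk estimate $\bar{\kk}_n(s)=\rho(s)+O(n^{-1})$ for $|s|\le\delta$. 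Granting this, the inner contribution is immediate: $\int_{|s|\le\delta}|\bar{\kk}_n-\rho|\,w_n\,ds\le\frac{C}{n}\int_\bR w_n\,ds=\frac{C}{n}\ra 0$. Adding the two regions gives the displayed limit and hence, combined with Lemma \ref{lemma: ln}, the relation \eqref{eq: wigner}.

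The main obstacle is exactly the uniform bulk estimate $\bar{\kk}_n=\rho+O(n^{-1})$ on $|s|\le\delta$: one must carry the three phases attached to $H_{n-1},H_n,H_{n+1}$ (whose angles $\varphi$ differ through the $(2k+1)^{1/2}$ normalization) through the Christoffel--Darboux combination and check that the $\cos(2\Theta)$-type terms cancel to the stated order, rather than controlling each $\psi_k^2$ in isolation. This is where the excerpt's ``slight refinement'' of the classical Wigner argument is needed. Should the cancellation leave a genuinely oscillatory term of size only $O(n^{-1/2})$, the conclusion still holds by a Riemann--Lebesgue/integration-by-parts argument: the residual phase $n\Psi(s)$ has $\Psi'(0)\ne 0$, so its rapid oscillation on the scale $|s|\lesssim n^{-1/2}$ on which $w_n$ is supported forces the weighted integral to vanish. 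In either case the inner integral tends to $0$, which completes the proof.
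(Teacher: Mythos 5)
Your proof is correct and follows essentially the same route as the paper: split the integral at a fixed point $\delta\in(0,\sqrt{2})$ inside the oscillatory (bulk) regime, use the Plancherel--Rotach asymptotics to get uniform convergence $\bar{\kk}_n\to\rho$ on $|s|\le\delta$ (your claimed $O(n^{-1})$ rate is stronger than needed --- the uniform $o(1)$ estimate, which is all the paper invokes, already makes the inner integral vanish since $\int_{\bR}w_n\,ds=1$), and kill the outer region by Gaussian concentration of $w_n$. The only difference is a minor and slightly more elementary handling of the outer region: you pair $\sup_{|s|>\delta}w_n(s)=w_n(\delta)\to 0$ against the total-variation bound $\int_{\bR}|\bar{\kk}_n-\rho|\,ds\le 2$ (both functions being probability densities, as your normalization computation $\int_{\bR}\bar{\kk}_n(s)\,ds=1$ correctly shows), whereas the paper pairs the uniform bound $\sup_{|s|>c}|\bar{\kk}_n(s)-\rho(s)|=O(1)$, obtained from Szeg\H{o}'s Theorem 8.91.3, against $\int_{|s|>c}w_n(s)\,ds\to 0$, so your variant dispenses with that uniform estimate altogether.
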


\begin{figure}[h]
\centering{\includegraphics[height=1.5in,width=2.5in]{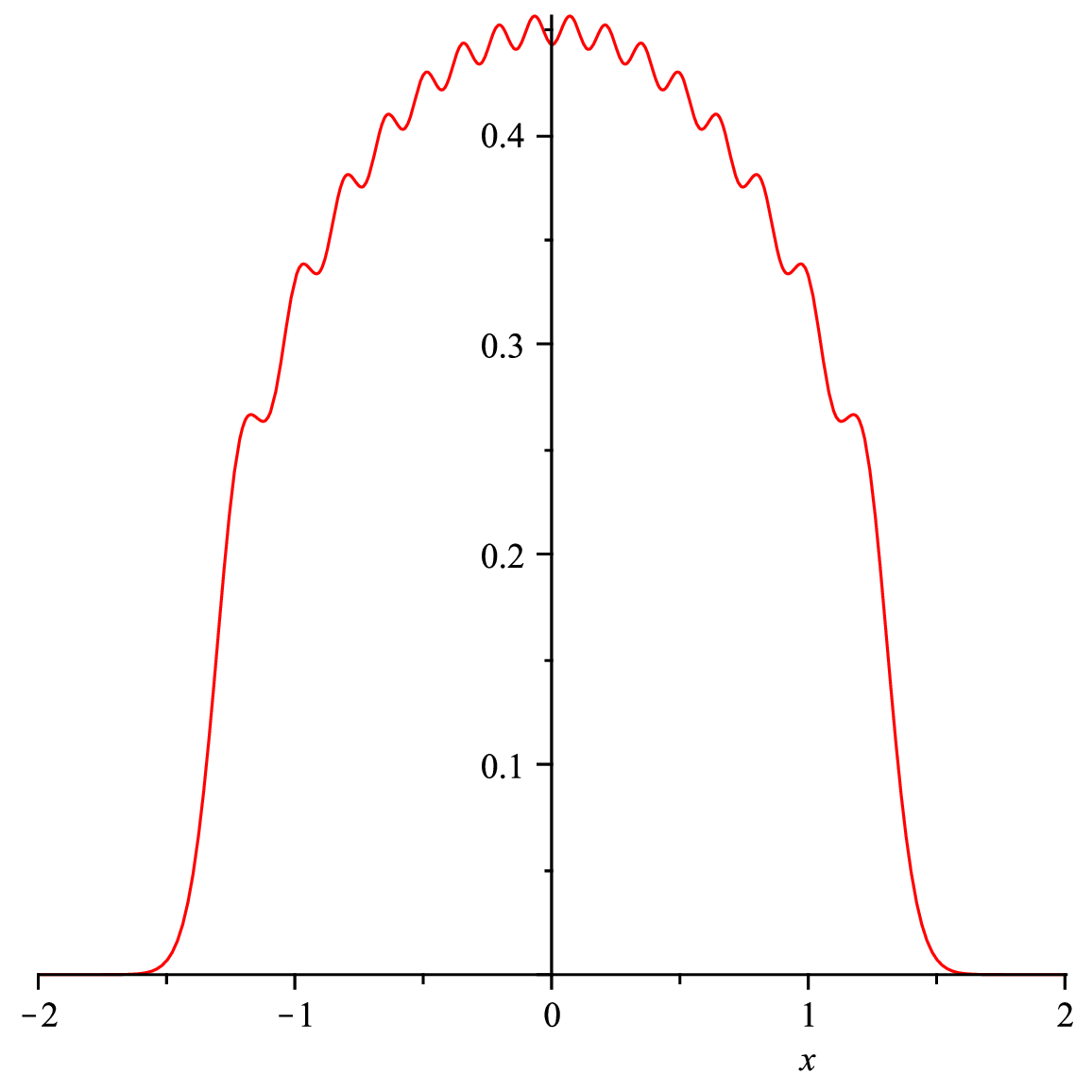}}
\caption{\sl The graph of $\bar{\kk}_{16}(x)$, $|x|\leq 2$.}
\label{fig: hermite}
\end{figure}

\begin{proof}  Fix $c\in (0,\sqrt{2})$ so that  the interval $(-c,c)$ lies inside the oscillatory regime of $H_n(\sqrt{n} t)$. We have
\[
\int_{\bR}\bigl(\,\bar{\kk}_n(s)-\rho(s)\,\bigr)w_n(s) ds
\]
\[
=\int_{|s|\leq c}\bigl(\,\bar{\kk}_n(s)-\rho(s)\,\bigr)w_n(s) ds+\int_{|s|>c}\bigl(\,\bar{\kk}_n(s)-\rho(s)\,\bigr)w_n(s) ds
\]
\[
\leq \sup_{|s|\leq c}|\bar{\kk}_n(s)-\rho(s)| +\sup_{|s|>c}|\bigl(\,\bar{\kk}_n(s)-\rho(s)\,\bigr)|\int_{|s|>c}w_n(s) ds.
\]
Using the  Plancherel-Rotach  formul{\ae} (\cite[Eq. (6.126)]{DG}, \cite{PlRo}, \cite[Thm. 8.22.9]{Sze})  and arguing  as in \cite[\S7.1.6]{For} or  \cite[\S 6.1]{Fy2} we deduce that
\[
\lim_{n\ra \infty}\sup_{|s|\leq c}|\bar{\kk}_n(s)-\rho(s)|=0.
\]
On the other hand
\[
\lim_{n\ra \infty}\int_{|s|>c}w_n(s) ds=0,
\]
 and \cite[Thm.8.91.3]{Sze}  implies that
\[
\sup_{|s|>c}|\bigl(\,\bar{\kk}_n(s)-\rho(s)\,\bigr)|=O(1)\;\;\mbox{as $n\ra \infty$}.
\]
\end{proof}
Since $w_n(s) ds$ converges to the $\delta$-measure concentrated at the origin we deduce
\[
\lim_{n\ra \infty}\int_{\bR}\rho(s) w_n(s) ds= \rho(0)=\frac{\sqrt{2}}{\pi}.
\]
This proves (\ref{eq: wigner}).

\appendix

\section{Gaussian measures and Gaussian random fields}
\label{s: gauss}
\setcounter{equation}{0}

For the reader's convenience we  survey here a few  basic facts about Gaussian  measures. For more details we refer to \cite{Bog}.  A   \emph{Gaussian measure} on $\bR$  is a Borel measure $\gamma_{m,\si}$  of the form
\[
\gamma_{m,\si}(x)= \frac{1}{\si \sqrt{2\pi}} e^{-\frac{ (x-m)^2}{2\si^2}} dx.
\]
The scalar $m$ is called the \emph{mean} while $\si$ is called the \emph{standard deviation}. We allow $\si$ to be zero in which case
\[
\gamma_{m,0}=\delta_m=\mbox{the Dirac measure on $\bR$ concentrated at $m$}.
\]
Suppose that $\bsV$ is a finite dimensional vector space. A \emph{Gaussian measure} on $\bsV$ is a  Borel measure  $\gamma$ on $\bsV$ such that, for any $\xi\in\bsV\dual$, the pushforward $\xi_*(\gamma)$ is a Gaussian measure on $\bR$, $\xi_*(\gamma)=\gamma_{m(\xi),\si(\xi)}$.

The map $\bsV\dual\ni \xi\mapsto m(\xi)\in\bR$ is linear, and  thus can be identified with a  vector $\bm_\gamma\in \bsV$ called the \emph{barycenter} or \emph{expectation} of $\gamma$ that can be alternatively defined by the equality $\bm_\gamma=\int_{\bsV} \bv d\gamma(\bv)$. Moreover, there exists  a   nonnegative definite, symmetric bilinear map
\[
\bSi: \bsV\dual\times\bsV\dual\ra \bR\;\;\mbox{such that}\;\;\si(\xi)^2= \bSi(\xi,\xi),\;\;\forall \xi\in \bsV\dual.
\]
The form  $\bSi$ is called the \emph{covariance form} and can be identified with a  linear operator $\bsS:\bsV\dual\ra \bsV$ such that
\[
\bSi(\xi,\eta)=\lan \xi, \bsS\eta\ran,\;\;\forall \xi,\eta\in \bsV\dual,
\]
where $\lan-,-\ran:\bsV\dual\times\bsV\ra \bR$ denotes the natural bilinear pairing between a vector space and its dual. The operator $\bsS$ is called the \emph{covariance operator} and it is explicitly described by the  integral formula
\[
\lan \xi, \bsS\eta\ran=\Lambda(\xi,\eta)=\int_{\bsV}\lan \xi,\bv-\bm_\gamma\ran \lan\eta, \bv-\bm_\gamma\ran d\gamma(\bv).
\]
The Gaussian measure is said to be \emph{nondegenerate} if $\bSi$ is nondegenerate, and it is called \emph{centered} if $\bm=0$. A nondegenerate    Gaussian measure  on $\bsV$ is uniquely determined by  its covariance form and its  barycenter. 

\begin{ex}  Suppose that   $\bsU$ is an $n$-dimensional  Euclidean space with  inner product $(-,-)$. We use the inner product to identify $\bsU$ with its dual $\bsU\dual$. If $A:\bsU\ra \bsU$ is a symmetric, positive definite   operator,  then
\begin{equation}
d\bgamma_A(\bx) =\frac{1}{(2\pi)^{\frac{n}{2}}\sqrt{\det A}} e^{-\frac{1}{2}(A^{-1}\bu, \bu)}\,|d\bu|
\label{eq: gamaA}
\end{equation}
is a  centered  Gaussian  measure on $\bsU$ with covariance form described by the  operator $A$.\qed
\end{ex}

If  $\bsV$ is a  finite dimensional vector space  equipped with a  Gaussian measure  $\gamma$  and $\bsL: \bsV\ra \bsU$ is a   linear  map then the pushforward $\bsL_*\gamma$ is a  Gaussian measure on   $\bsU$ with barycenter
\[
\bm_{\bsL_*\gamma}=\bsL(\bm_\gamma)
\]
and covariance form
\[
\bSi_{\bsL_*\gamma}:\bsU\dual\times \bsU\dual\ra \bR,\;\; \bSi_{\bsL_*\gamma}(\eta,\eta)= \bSi_\gamma(\bsL\dual\eta,\bsL\dual\eta),\;\;\forall \eta \in \bsU\dual,
\]
where $\bsL\dual:\bsU\dual\ra \bsV\dual$ is the dual (transpose) of the linear map $\bsL$. Observe that if $\gamma$ is nondegenerate and $\bsL$ is surjective, then $\bsL_*\gamma$ is also nondegenerate.

Suppose $(\eS, \mu)$ is a probability space.    A \emph{Gaussian} random vector on $(\eS,\mu)$ is a (Borel) measurable map
\[
X: \eS\ra \bsV,\;\;\mbox{$\bsV$ finite dimensional vector space}
\]
such that $X_*\mu$ is a Gaussian measure on $\bsV$. We will refer to this measure as the \emph{associated Gaussian measure}, we denote it  by $\gamma_X$ and  we denote by $\bSi_X$ (respectively $\bsS_X$) its covariance form (respectively operator),
\[
\bSi_X(\xi_1,\xi_2)=\bsE\bigl(\, \lan \xi_1, X-\bsE(X)\,\ran\,\lan \xi_2, X-\bsE(X)\,\ran\,\bigr).
\]
 Note that the   expectation of $\gamma_X$ is precisely the expectation of $X$. The random vector is called \emph{nondegenerate}, respectively \emph{centered}, if the Gaussian measure  $\gamma_X$ is such.

 Suppose  that $X_j:\eS\ra \bsV_1$, $j=1,2$, are two \emph{centered} Gaussian random vectors such that the direct sum $X_1\oplus X_2:\eS\ra \bsV_1\oplus \bsV_2
$ is also a centered  Gaussian random vector with  associated  Gaussian measure
\[
\gamma_{X_1\oplus X_2}= p_{X_1\oplus X_2} (\bx_1,\bx_2) |d\bx_1d\bx_2|.
\]
 We obtain a bilinear form
\[
\cov(X_1,X_2):\bsV_1\dual\times \bsV_2\dual\ra \bR,\;\; \cov(X_1,X_2)(\xi_1,\xi_2)=\bSi(\xi_1,\xi_2),
\]
called the \emph{covariance form}. The   random vectors $X_1$ and $X_2$ are independent if and only if  they are uncorrelated, i.e.,
\[
\cov(X_1,X_2)=0.
\]
We can form the random vector $\bsE(X_1|X_2)$, the conditional expectation of $X_1$ given $X_2$.  If $X_1$ and $X_2$ are independent then $\bsE(X_1|X_2)=\bsE(X_1)$, while at the other extreme we have $\bsE(X_1|X_1)=X_1$.

To find a formula for $\bsE(X_1|X_2)$ in general we fix Euclidean metrics $(-,-)_{\bsV_j}$ on $\bsV_j$. We can then identify  $\cov(X_1,X_2)$ with a linear operator $\Cov(X_1,X_2):\bsV_2\ra \bsV_1$, via the equality
\[
\begin{split}
\bsE\bigl(\,\lan \xi_1,X_1\ran\lan\xi_2, X_2\ran\,\bigr) &=\cov(X_1,X_2)(\xi_1,\xi_2)\\
&=\bigl\lan\, \xi_1, \Cov(X_1,X_2)\xi_2^\dag\,\bigr\ran,\;\;\forall \xi_1\in\bsV_1\dual,\;\;\xi_2\in\bsV_2\dual,
\end{split}
\]
where $\xi_2^\dag\in\bsV_2$ denotes the vector metric dual to $\xi_2$. The  operator $\Cov(X_1,X_2)$ is called the \emph{covariance operator} of $X_1, X_2$.   For a proof of the next classical result we refer to  \cite[Prop. 1.2]{AzWs}.

\begin{lemma}[Regression formula] If $X_1$ and $X_2$ are as above and, additionally, $X_2$ is nondegenerate, then
\begin{equation}
\bsE(X_1|X_2)= \Cov(X_1,X_2)\bsS_{X_2}^{-1}\bigl(\,X_2-\bsE(X_2)\,\bigr) + \bsE(X_1).
\label{eq: regress1}
\end{equation}\qed
\end{lemma}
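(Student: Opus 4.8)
The plan is to identify $\bsE(X_1\mid X_2)$ by the defining property of conditional expectation, combined with the two elementary Gaussian facts already recorded in this appendix: that a linear image of a Gaussian vector is Gaussian, and that jointly Gaussian vectors are independent precisely when they are uncorrelated. First I would reduce to the centered case. Replacing $X_j$ by $X_j-\bsE(X_j)$ changes neither $\Cov(X_1,X_2)$ nor $\bsS_{X_2}$, and conditioning on $X_2-\bsE(X_2)$ is the same as conditioning on $X_2$, so it suffices to prove the formula $\bsE(X_1\mid X_2)=\Cov(X_1,X_2)\bsS_{X_2}^{-1}X_2$ when $X_1,X_2$ are both centered. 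I then set $\widehat{X}_1:=\Cov(X_1,X_2)\bsS_{X_2}^{-1}X_2$, which is a fixed linear image of $X_2$ and hence $\sigma(X_2)$-measurable.

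Next I would introduce the residual $R:=X_1-\widehat{X}_1$ and show that it is independent of $X_2$. Since $R\oplus X_2$ is the image of the jointly Gaussian vector $X_1\oplus X_2$ under a linear map, it is again Gaussian by the pushforward property recalled at the start of this appendix; consequently $R$ and $X_2$ are jointly Gaussian, and their independence reduces to the vanishing of $\cov(R,X_2)$. Writing $A:=\Cov(X_1,X_2)\bsS_{X_2}^{-1}:\bsV_2\to\bsV_1$ and using $\Cov(X_2,X_2)=\bsS_{X_2}$ together with the identity $\Cov(AX_2,X_2)=A\,\Cov(X_2,X_2)$, I obtain $\Cov(\widehat{X}_1,X_2)=A\bsS_{X_2}=\Cov(X_1,X_2)$, whence $\Cov(R,X_2)=\Cov(X_1,X_2)-\Cov(X_1,X_2)=0$.

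From $\cov(R,X_2)=0$ and joint Gaussianity, the independence of $R$ and $X_2$ follows, so $\bsE(R\mid X_2)=\bsE(R)=0$. As $\widehat{X}_1$ is a function of $X_2$, we have $\bsE(\widehat{X}_1\mid X_2)=\widehat{X}_1$, and therefore $\bsE(X_1\mid X_2)=\bsE(\widehat{X}_1\mid X_2)+\bsE(R\mid X_2)=\Cov(X_1,X_2)\bsS_{X_2}^{-1}X_2$. Undoing the centering, that is, adding back $\bsE(X_1)$ and replacing $X_2$ by $X_2-\bsE(X_2)$, yields exactly (\ref{eq: regress1}).

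The main obstacle is purely the linear-algebraic bookkeeping in the step $\Cov(\widehat{X}_1,X_2)=\Cov(X_1,X_2)$: one must track the dual/transpose conventions hidden in the definition of the covariance operator, since $\Cov(X_1,X_2)$ is specified through the pairing $\langle\xi_1,\Cov(X_1,X_2)\xi_2^\dag\rangle$, and the symmetry of $\bsS_{X_2}$ is what justifies $\Cov(AX_2,X_2)=A\bsS_{X_2}$. Everything else is a direct application of the two Gaussian principles already established, so the proof is short once this identity is set up correctly.
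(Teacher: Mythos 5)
Your proof is correct and follows essentially the same route as the paper, which (after \cite[Prop.\ 1.2]{AzWs}) also produces the operator $C=\Cov(X_1,X_2)\bsS_{X_2}^{-1}$ by requiring the residual $X_1-CX_2$ to be uncorrelated with, hence independent of, $X_2$, using the identity $\Cov(CX_2,X_2)=C\bsS_{X_2}$. The only cosmetic differences are that you verify the candidate operator rather than solve for it, and you spell out the reduction to the centered case, which the paper builds into its standing assumptions.
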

The conditional  probability density of $X_1$  given that $X_2=\bx_2$ is the function
\[
p_{(X_1|X_2=\bx_2)}(\bx_1)=  \frac{p_{X_1\oplus X_2}(\bx_1,\bx_2)}{\int_{\bsV_1} p_{X_1\oplus X_2}(\bx_1,\bx_2) |d\bx_1|}.
\]
For a measurable function $f:\bsV_1\ra \bR$ the conditional expectation   $\bsE(f(X_1)|X_2=\bx_2)$ is the (deterministic) scalar
\[
\bsE(f(X_1)|X_2=\bx_2)= \int_{\bsV_1} f(\bx_1) p_{(X_1|X_2=\bx_2)}(\bx_1)|d\bx_1|.
\]
Again, if $X_2$ is nondegenerate,  then we have the \emph{regression formula}
\begin{equation}
\bsE(f(X_1)|X_2=\bx_2) =\bsE\bigl(\, f(Y+C\bx_2)\,\bigr)
\label{eq: regress2}
\end{equation}
where $Y:\eS\ra \bsV_1$ is a Gaussian  vector with
\begin{equation}
\bsE(Y)=\bsE(X_1)-C\bsE(X_2),\;\;\bsS_Y=\bsS_{X_1}-\Cov(X_1,X_2)\bsS_{X_2}^{-1} \Cov(X_2,X_1),
\label{eq: cov-regr}
\end{equation}
and $C$ is given by 
\begin{equation}
 C= \Cov(X_1,X_2)\bsS_{X_2}^{-1}.
\label{eq: gauss-c}
\end{equation}
Let us point out that if $X:\eS\ra \bsU$ is a Gaussian random vector and $\bsL:\bsU\ra \bsV$ is a linear map, then the random vector $\bsL X:\eS\ra \bsV$ is also Gaussian. Moreover
\[
\bsE(\bsL X)= \bsL \bsE(X),\;\;\bSi_{\bsL X}(\xi,\xi)=\bSi_X(\bsL\dual\xi,\bsL\dual\xi),\;\;\forall \xi\in\bsV\dual,
\]
where $\bsL\dual: \bsV\dual\ra \bsU\dual$ is the linear map dual to $\bsL$. Equivalently,  $\bsS_{\bsL X}= \bsL \bsS_X \bsL\dual$.

A \emph{random field} (or \emph{function})  on a set $\bsT$ is a map $\xi:\bsT \times (\eS, \mu) \ra \bR,\;\; (t, s)\mapsto\xi_t(s)$ such that
 \begin{itemize}

 \item $(\eS,\mu)$ is a probability space,   and

 \item for any $t\in\bsT$ the  function $\xi_t:\eS\ra \bR$ is measurable, i.e., it is a random variable.

 \end{itemize}

Thus, a random field on $\bsT$ is a family  of random variables $\xi_t$ parameterized by the set $\bsT$.   For simplicity we will assume that all these random variables have finite second moments.   For any $t\in \bsT$ we denote by $\mu_{t_1}$ the expectation  of $\xi_t$.   The  \emph{covariance function} or \emph{kernel} of the  field   is the function $C_\xi: \bsT\times \bsT\ra \bR$ defined by
\[
C_\xi(t_1,t_2)= \bsE\bigl(\,(\xi_{t_1}-\mu_{t_1})\,(\,\xi_{t_2}-\mu_{t_2})\,\bigr)=\int_{\eS} \bigl(\xi_{t_1}(s)-\mu_{t_1}\bigr)\bigl(\xi_{t_2}(s)-\mu_{t_2}\,\bigr)\,d\mu(s).
\]
The  field is called  \emph{Gaussian} if for any  finite subset $F\subset \bsT$  the  random vector
\[
\eS\in s\mapsto  \bigl(\, \xi_t(s)\,\bigr)_{t\in F}\in \bR^F
\]
is a Gaussian   random vector.    Almost all the important information concerning a Gaussian  random field can be extracted from its  covariance kernel.  For more information about random fields we refer to \cite{AT,AzWs,CrLe, GiSk}.

In the conclusion of this   section we want to  describe a few simple integral formulas.

\begin{proposition}  Suppose $\bsV$ is an Euclidean  space of dimension $N$, $f: \bsU\ra \bR$ is a locally integrable, positively homogeneous function of degree $k\geq 0$, and $A:\bsU\ra \bsU$ is a positive definite symmetric operator.   Denote by $B(\bsU)$ the unit ball of $\bsV$ centered at the origin, and by $S(\bsU)$ its boundary. Then the following hold
\begin{equation}
\begin{split}
\frac{1}{\pi^{\frac{N}{2}}(k+N)}\int_{S(\bsU)} f(\bu) |dA(\bu)|&=\frac{1}{\pi^{\frac{N}{2}}}\int_{B(\bsU)} f(\bu) |d\bu|\\
&=\frac{1}{\Gamma\bigl(1+\frac{k+N}{2})}\int_{\bsU} f(\bu) \frac{e^{-|\bu|^2}}{\pi^{\frac{N}{2}}}|d\bu|.
\end{split}
\label{eq: int-ball}
\end{equation}
\begin{equation}
\int_{\bsU} f(\bu) d\bgamma_{tA}(\bu)= t^{\frac{k}{2}}\int_{\bsU} f(\bu) d\bgamma_{A}(\bu)\;\;\forall t>0,
\label{eq: resc-gauss}
\end{equation}
where $d\bgamma_A$ is the Gaussian measure defined by (\ref{eq: gamaA}).
\label{prop: int-form}
\end{proposition}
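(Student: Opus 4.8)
The plan is to prove both identities by reducing them to elementary one-variable integrals: for (\ref{eq: int-ball}) the natural tool is polar coordinates on $\bsU$, while (\ref{eq: resc-gauss}) follows from a single linear change of variables that exploits positive homogeneity. Throughout, write $N=\dim\bsU$.

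First I would treat (\ref{eq: int-ball}). Writing $\bu=r\bom$ with $r=|\bu|\ge 0$ and $\bom\in S(\bsU)$, the volume density factors as $|d\bu|=r^{N-1}\,dr\,|dA(\bom)|$, and positive homogeneity of degree $k$ gives $f(r\bom)=r^kf(\bom)$. Integrating over the unit ball yields $\int_{B(\bsU)}f(\bu)\,|d\bu|=\bigl(\int_0^1 r^{k+N-1}\,dr\bigr)\int_{S(\bsU)}f(\bom)\,|dA(\bom)|=\frac{1}{k+N}\int_{S(\bsU)}f(\bom)\,|dA(\bom)|$, which, after dividing by $\pi^{N/2}$, is precisely the first equality. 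For the Gaussian expression I would run the same computation over all of $\bsU$ against the weight $e^{-|\bu|^2}$, so that the radial factor becomes $\int_0^\infty r^{k+N-1}e^{-r^2}\,dr$. The substitution $s=r^2$ identifies this integral with $\tfrac12\Gamma\bigl(\tfrac{k+N}{2}\bigr)$, and the functional equation $\Gamma\bigl(1+\tfrac{k+N}{2}\bigr)=\tfrac{k+N}{2}\Gamma\bigl(\tfrac{k+N}{2}\bigr)$ converts the prefactor $\tfrac{1}{\Gamma(1+\frac{k+N}{2})}\cdot\tfrac12\Gamma\bigl(\tfrac{k+N}{2}\bigr)$ into $\tfrac{1}{k+N}$. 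This matches the first term and closes the chain of three equalities.

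For (\ref{eq: resc-gauss}) I would substitute $\bu=\sqrt{t}\,\bv$ into the definition (\ref{eq: gamaA}) of $d\bgamma_{tA}$. Since $((tA)^{-1}\bu,\bu)=(A^{-1}\bv,\bv)$, $|d\bu|=t^{N/2}|d\bv|$, and $\sqrt{\det(tA)}=t^{N/2}\sqrt{\det A}$, the two factors of $t^{N/2}$ cancel, so that the change of variables carries $d\bgamma_{tA}$ exactly onto $d\bgamma_A$; positive homogeneity then extracts the factor $f(\sqrt{t}\,\bv)=t^{k/2}f(\bv)$, which is the asserted identity.

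I do not expect a genuine obstacle here; the only points requiring a word of care are integrability. The radial exponent $k+N-1>-1$ (using $k\ge 0$ and $N\ge 1$) guarantees convergence near $r=0$, the Gaussian weight controls the tail, and local integrability of $f$ on $\bsU$ forces $\int_{S(\bsU)}|f|\,|dA|<\infty$; indeed, applying the polar computation to $|f|$ shows this finiteness is equivalent to local integrability of $f$. Hence all integrals in sight are absolutely convergent, and both the passage to polar coordinates (a use of Fubini) and the dilation $\bu=\sqrt{t}\,\bv$ are fully justified.
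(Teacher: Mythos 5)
Your proof is correct and follows essentially the same route as the paper: polar coordinates plus homogeneity for (\ref{eq: int-ball}), with the substitution $s=r^2$ producing the factor $\tfrac12\Gamma\bigl(\tfrac{k+N}{2}\bigr)$, and the dilation $\bu=\sqrt{t}\,\bv$ for (\ref{eq: resc-gauss}). Your added remarks on absolute convergence and the equivalence of local integrability with $\int_{S(\bsU)}|f|\,|dA|<\infty$ are sound details the paper leaves implicit.
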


\begin{proof} We have
\[
\int_{B(\bsU)} f(\bu) |d\bu|  =\int_0^1t^{k+N-1}\left(\int_{S(\bsU)} f(\bu) |dA(\bu)|\right)= \frac{1}{k+N}\int_{S(\bsU)} f(\bu) |dA(\bu)|.
\]
On the other hand
\[
\frac{1}{\pi^{\frac{N}{2}}}\int_{\bsU} f(u) e^{-|\bu|^2} |d\bu|=\frac{1}{\pi^{\frac{N}{2}}}\left(\int_0^\infty t^{k+N-1}  e^{-t^2} dt\right) \int_{S(\bsU)} f(\bu) |dA(\bu)|
\]
\[
=\frac{1}{2\pi^{\frac{N}{2}}}\Gamma\left(\frac{k+N}{2}\right)\int_{S(\bsU)} f(\bu) |dA(\bu)|=\frac{k+N}{2\pi^{\frac{N}{2}}}\Gamma\left(\frac{k+N}{2}\right)\int_{B(\bsU)} f(\bu) |d\bu|.
\]
\[
=\frac{1}{\pi^{\frac{N}{2}}}\Gamma\left(1+\frac{k+N}{2}\right)\int_{B(\bsU)} f(\bu) |d\bu|.
\]
This proves  (\ref{eq: int-ball}).  The  equality (\ref{eq: resc-gauss}) follows by using the change in variables $\bu =t^{\frac{1}{2}}\bv$.
\end{proof}

\section{Gaussian random symmetric matrices}
\label{s: rand-sym}
\setcounter{equation}{0}

  We want to describe in some detail a $3$-parameter family of centered Gaussian measures on  $\eS_m$, the vector space of real symmetric  $m\times m$ matrices, $m>1$.    
  
  For any  $1\leq i\leq j$ define  $\xi_{ij}\in \eS_m\dual$  so that for any $A\in  \eS_m$
\[
\xi_{ij}(A) =  a_{ij}=\mbox{the $(i,j)$-th entry of the matrix $A$}.
\]
The collection  $(\xi_{ij})_{1\leq i\leq j\leq m}$ is a basis of the dual space  $\eS_m\dual$. We denote by $(E_{ij})_{1\leq i\leq j}$ the dual basis of $\eS_m$. More precisely,   $E_{ij}$ is the symmetric  matrix whose  $(i,j)$ and $(j,i)$ entries are $1$ while all the other entries  are equal to zero.   For any $A\in \eS_m$ we have
\[
A=\sum_{i\leq j} \xi_{ij}(A) E_{ij}.
\]
The space $\eS_m$ is equipped with an inner product
\[
(-,-):\eS_m\times\eS_m\ra \bR,\;\;(A,B)= \tr(AB),\;\;\forall A,B\in\eS_m.
\]
This inner product is invariant with respect to the action of $\SO(m)$ on $\eS_m$. We set
\[
\widehat{E}_{ij}:=\begin{cases}
E_{ij}, & i=j\\
\frac{1}{\sqrt{2}}E_{ij}, & i<j.
\end{cases}.
\]
The collection  $(\widehat{E}_{ij})_{i\leq j}$ is a basis  of $\eS_m$ orthonormal with respect to the above inner product.  We set
\[
\hat{\xi}_{ij}:= \begin{cases}
\xi_{ij}, & i=j\\
\sqrt{2}\xi_{ij}, & i<j.
\end{cases}
\]
The collection $(\hat{\xi}_{ij})_{i\leq j}$  the orthonormal basis of $\eS_m\dual$ dual to $(\widehat{E}_{ij})$.  The volume density induced by this metric is
\[
|dX|:=\prod_{i\leq j} d\widehat{\xi}_{ij}= 2^{\frac{1}{2}\binom{m}{2}}\prod_{i\leq j} dx_{ij}.
\]

To any  numbers  $a,b,c$  satisfying the inequalities
\begin{equation}
a-b,\;\; c,\;\; a+(m-1) b >0.
\label{eq: poz}
\end{equation}
 we will associate a centered Gaussian measure $\Gamma_{a,b,c}$ on $\eS_m$  uniquely determined by its covariance form 
 \[
\bSi=\bSi_{a,b,c}:\eS_m\dual\times \eS_m\dual\ra \bR
\]
defined  as follows:
 \begin{subequations}
\begin{equation}
\bSi(\xi_{ii},\xi_{ii}) = a,\;\; \bSi(\xi_{ii},\xi_{jj})=b,\;\;\forall i\neq j,
\label{eq: cov-diag}
\end{equation}
\begin{equation}
\bSi(\xi_{ij},\xi_{ij})=c,\;\;\bSi(\xi_{ij},\xi_{k\ell})=0,\;\;\forall i<j,\;\; k\leq \ell,\;\;(i,j)\neq (k,\ell).
\label{eq: cov-off}
\end{equation}
\end{subequations}
To see that $\bSi_{a,b,c}$ is positive definite if $a,b,c$ satisfy (\ref{eq: poz})  we decompose $\eS_m\dual$ as a direct sum of subspaces
\[
\eS_m\dual=\eD_m\oplus \eO_m,
\]
\[
\eD_m={\rm span}\,\{\xi_{ii};\;\;1\leq i\leq m\},\;\;\eO_m={\rm span}\,\{\xi_{ij};\;\;1\leq i <j \leq m\},\;\;\dim\eO_m=\binom{m}{2}
\]
With respect to this  decomposition, and the corresponding bases of these subspaces the matrix $Q_{a,b,c}$ describing $\bSi_{a,b,c}$ with respect to the basis $(\xi_{ij})$ has a direct sum decomposition
\[
Q_{a,b,c}= G_m(a,b)\oplus c\one_{\binom{m}{2}},
\]
where $G_m(a,b)$ is the $m\times m$ symmetric matrix whose diagonal entries are equal to  $a$ while all the off diagonal     entries are all equal to $b$.

The the spectrum of $G_m(a,b)$ consists of two eigenvalues: $(a-b)$  with multiplicity  $(m-1)$ and the simple eigenvalue $a-b +mb$.  Indeed,  if  $C_m$ denotes the $m\times m$ matrix  with all entries  equal to $1$, then $G_m(a,b)= (a-b)\one_m+bC_m$. The matrix $C_m$ has rank $1$ and a single nonzero eigenvalue equal to $m$ with multiplicity $1$. This proves that $Q_{a,b,c}$ is positive definite since its spectrum is positive.  We denote by $d\Gamma_{a,b,c}$ the  centered  Gaussian measure   on $\eS_m$ with covariance form $\bSi_{a,b,c}$.

Since $\eS_m$ is equipped with an inner product   we can identify   $\bSi_{a,b,c}$ with a symmetric, positive definite bilinear form  on  $\eS_m$. We would like to     compute the matrix $\widehat{Q}=\widehat{Q}_{a,b,c}$ that describes $\bSi_{a,b,c}$ with respect to the orthonormal  basis  $(\widehat{E}_{ij})_{1\leq i\leq j}$.   We have
\[
\widehat{Q}(\widehat{E}_{ii}, \widehat{E}_{ii})= Q(\hat{\xi}_{ii},\hat{\xi}_{ii})=a,\;\;\widehat{Q}(\widehat{E}_{ii}, \widehat{E}_{jj})= b,\;\;\forall i\neq j,
\]
\[
\widehat{Q}(\widehat{E}_{ij}, \widehat{E}_{ij})=Q(\hat{\xi}_{ij},\hat{\xi}_{ij})= 2Q({\xi}_{ij},{\xi}_{ij})=2c,\;\;\forall i<j,
\]
Thus
\begin{equation}
\widehat{Q}_{a,b,c}=G_m(a,b)\oplus 2c \one_{\binom{m}{2}}.
\label{eq: cov-op}
\end{equation}
If $|-|_{a,b,c}$ denotes the  Euclidean norm on $\eS_m$ determined by $\bSi_{a,b,c}$ then for
\[
A=\sum_{i\leq j} a_{ij}E_{ij}= \sum_i a_{ii}\widehat{E}_{ii}+\sqrt{2}\sum_{i<j} a_{ij}\widehat{E}_{ij}.
\]
we have
\[
|A|_{a,b,c}^2 = a\sum_i a_{ii}^2 +2b\sum_{i<j} a_{ii}a_{jj}+4c\sum_{i<j}a_{ij}^2
\]
\[
= (a-b-2c)\sum_ia_{ii}^2+ b\left(\sum_i a_{ii}\right)^2 + 2c\left(\sum_i a_{ii}^2 +2\sum_{i<j} a_{ij}^2\right)
\]
\[
=(a-b-2c)\sum_ia_{ii}^2+b (\tr A)^2 + 2c\tr A^2.
\]
Observe that when
\begin{equation}
a-b=2c
\label{eq: inv}
\end{equation}
  we have
\begin{equation}
|A|_{a,b,c}^2 =b (\tr A)^2 + 2c\tr A^2
\label{eq: inv2}
\end{equation}
so that the norm $|-|_{a,b,c}$ and the  Gaussian measure $d\Gamma_{a,b,c}$ are $O(m)$-invariant.    Let us point out that the space $\eS_m$ equipped with the  Gaussian measure  $d\Gamma_{2,0,1}$ is  the well known $GOE$, the \emph{Gaussian orthogonal ensemble}.

To obtain a   more concrete description of  $\Gamma_{a,b,c}$   we first   identify  $\bSi_{a,b,c}$ with a   symmetric operator $\widehat{Q}_{a,b,c}: \eS_m\ra \eS_m$. Using (\ref{eq: cov-op}) we deduce that
\[
\widehat{Q}_{a,b,c}= G(a,b)\oplus 2c \one_{\binom{m}{2}}.
\]
Observe  that
\begin{equation}
\det\widehat{Q}_{a,b,c}= (a-b)\bigl(\,a+(m-1)b\,\bigr)^{m-1}(2c)^{\binom{m}{2}},
\label{eq: det-cov}
\end{equation}
and
\begin{equation}
\widehat{Q}_{a,b,c}^{-1}=\widehat{Q}_{a',b',c'}= G_m(a',b')\oplus 2c'\one_{\binom{m}{2}},
\label{eq: inverse-cov}
\end{equation}
where $2c'=\frac{1}{2c}$ and  the real numbers $a',b'$ are determined from  the linear system
\begin{equation}
\left\{
\begin{array}{rcl}
a'-b' & = & \frac{1}{a-b}\\
&&\\
a'+(m-1)b' & = & \frac{1}{a+(m-1)b}.
\end{array}
\right.
\label{eq: inv-cov}
\end{equation}
We then have
\begin{equation}
d\Gamma_{a,b,c}(X)=\frac{1}{(2\pi)^{\frac{m(m+1)}{4}}(\det \widehat{Q}_{a,b,c})^{\frac{1}{2}}} e^{-\frac{1}{2}(\widehat{Q}^{-1}_{a,b,c}X, X)} 2^{\frac{1}{2}\binom{m}2}\prod_{i\leq j} dx_{ij},
\label{eq: cov-gauss}
\end{equation}
where
\begin{equation}
(\widehat{Q}^{-1}_{a,b,c}X, X)= \Bigl(\, a'-b'-\frac{1}{2c}\Bigr)\sum_i x_{ii}^2+ b'(\tr X)^2 +\frac{1}{2c}\tr X^2.
\label{eq: quad-gauss}
\end{equation}
The   special case $b=c>0$, $a=3c$ is particularly important for our considerations. We denote by $(-,-)c$   and respectively $d\Gamma_c$ the inner product  and  respectively the Gaussian measure  on $\eS_m$ corresponding to the covariance form $\Sigma_{3c,c,c}$. 

If we set $\widehat{Q}_c:=\widehat{Q}_{3c,c,c}$ then we deduce from (\ref{eq: inverse-cov}) that
\[
\widehat{Q}_c^{-1}= \widehat{Q}_{a',b',c'}=G_m(a',b')\oplus \frac{1}{2c}\one_{\binom{m}{2}},
\]
where
\[
\left\{
\begin{array}{rcl}
a'-b' & = & \frac{1}{2c}=2c'\\
&&\\
a'+(m-1)b' & = & \frac{1}{(m+2)c}.
\end{array}
\right.
\]
We deduce
\[
mb'= \frac{1}{(m+2)c}-\frac{1}{2c}=-\frac{m}{2c(m+2)}\Rightarrow b'  =-\frac{1}{2c(m+2)}.
\]
Note that the invariance condition (\ref{eq: inv}) $a'-b'=2c'$ is  automatically satisfied  so that
\[
\bigl(\widehat{Q}_c^{-1} X, X\bigr)= \frac{1}{2c}\tr X^2 -\frac{1}{2c(m+2)}(\tr X)^2.
\]
Using (\ref{eq: det-cov}) and (\ref{eq: cov-gauss})  we deduce
\begin{equation}
d\Gamma_c(X)=\frac{1}{(2\pi c)^{\frac{m(m+1)}{4}}\sqrt{\mu_m}}\,\cdot\,  e^{-\frac{1}{4c}\bigl(\,\tr X^2-\frac{1}{m+2}(\tr X)^2\,\bigr)} \underbrace{2^{\frac{1}{2}\binom{m}2}\prod_{i\leq j} |dx_{ij}|}_{|dX|},
\label{eq: cov-gauss1}
\end{equation}
where
\begin{equation}
\mu_m:= 2^{\binom{m}{2}+(m-1)}(m+2).
\label{eq: mum}
\end{equation}     
The inner product $(-,-)_c$   has  the  alternate description
\begin{equation}
\begin{split}
(A,B)_c=  I_c(A, B)&:=4c\int_{\bR^m} (A\bx,\bx)(B\bx,\bx) \frac{e^{-|\bx|^2}}{\pi^{\frac{m}{2}}} |d\bx|\\
&=c\int_{\bR^m} (A\bx,\bx)(B\bx,\bx) \frac{e^{-\frac{|\bx|^2}{2}}}{(2\pi)^{\frac{m}{2}}} |d\bx|,\;\;\forall A,B\in\eS_m.
\end{split}
\label{eq: inv1}
\end{equation}

\section{A Gaussian integral}
\label{s: b}
\setcounter{equation}{0}

\noindent {\bf The proof of (\ref{eq: harm60}).}  We want to find the value of the integral
\[
\bsI= \frac{1}{4(2\pi)^{\frac{3}{2}}}\int_{\eS_2} |\det X|e^{-\frac{1}{4}\bigl(\,\tr X^2 -\frac{1}{4}(\tr X)^2\,\bigr)}\,\cdot\, \sqrt{2}\prod_{1\leq i\leq j\leq 2} dx_{ij}.
\]
We first make the change in coordinates 
\[
x_{11}= x+y, \;\;x_{22}=x-y,\;\;x_{12}=z.
\]
Then
\[
\det X= x^2-y^2-z^2,\;\;\tr X= 2x,\;\;\tr X^2=  2(x^2+y^2+z^2).
\]
Hence
\[
\bsI=\frac{1}{2(2\pi)^{\frac{3}{2}}} \int_{\bR^3} |x^2-y^2-z^2| e^{-\frac{1}{4}(x^2+2y^2+2z^2)} \sqrt{2}|dxdydz|
\]
($x=\sqrt{2}u$)
\[
=\frac{1}{(2\pi)^{\frac{3}{2}}} \int_{\bR^3} |2u^2-y^2-z^2| e^{-\frac{1}{2}(u^2+y^2+z^2)} |dudydz|
\]
\[
=\frac{2}{\pi^{\frac{3}{2}}}\int_{\bR^3} |2u^2-y^2-z^2| e^{-(u^2+y^2+z^2)} |dudydz|.
\]
We now make the change in variables  $y=r\cos\theta$, $y=r\sin\theta$, $r>0$ $\theta\in [0,2\pi)$ and we deduce
\[
\begin{split}
\bsI & =\frac{2}{\pi^{\frac{3}{2}}}\int_{\bR}\int_0^\infty \left(\int_0^{2\pi}|2u^2 -r^2|e^{-u^2+r^2} d\theta\right) rdr du\\
&=\frac{8\pi}{\pi^{\frac{3}{2}}}\int_0^\infty\int_0^\infty|2u^2-r^2| e^{-(u^2+r^2)}r  dr du.
\end{split}
\]
We now make the change in variables
\[
u=t\sin\vfi,\;\;r=t\cos\vfi ,\;\;t>0,\;\;0\leq \vfi\leq \frac{\pi}{2},
\]
and we conclude
\[
\bsI=\frac{8}{\pi^{\frac{1}{2}}}\left(\int_0^\infty   e^{-t^2} t^4 dt\right)\left(\int_0^{\frac{\pi}{2}}| 3\sin^2 \vfi -1|  \cos\vfi d\vfi\right)
\]
($t=\sqrt{s}$, $x=\sin\vfi$)
\[
=\frac{4}{\pi^{\frac{1}{2}}}\left(\int_0^\infty e^{-s} s^{\frac{3}{2}} ds\right) \left(\int_0^1 |3x^2-1| dx\right)=\frac{4}{\pi^{\frac{1}{2}}}\times \Gamma\Bigl(\frac{5}{2}\Bigr)\times \frac{4}{3\sqrt{3}}=\frac{4}{\sqrt{3}}.\proofend
\]

\end{document}